\newcommand*{\faktor}[2]{
  \raisebox{0.5\height}{\ensuremath{#1}}
  \mkern-5mu\diagup\mkern-4mu
  \raisebox{-0.5\height}{\ensuremath{#2}}
} 
\newcommand\restr[2]{{
\left.
\kern-
\nulldelimiterspace 
#1 
\right|_{#2} 
}}
\tikzset{>=stealth}
\newenvironment{manualtheorem}[1]{%
  \manualtheoreminner
}{\endmanualtheoreminner}
\newtheorem{theorem}{Theorem}[section]
\newtheorem{proposition}[theorem]{Proposition}
\newtheorem{lemma}[theorem]{Lemma}
\newtheorem{corollary}[theorem]{Corollary}
\theoremstyle{definition}
\newtheorem{remark}[theorem]{Remark}
\newtheorem{definition}[theorem]{Definition}
\theoremstyle{definition}
\begin{document}
\title{The (2,1)-category of small coherent categories}
\author{Kristóf Kanalas}
\maketitle

\begin{abstract}
    There is a well-known correspondence between coherent theories (and their interpretations) and coherent categories (resp.~functors), hence the (2,1)-category $\mathbf{Coh_{\sim}}$ (of small coherent categories, coherent functors and all natural isomorphisms) is of logical interest. We prove that this category admits all small 2-limits and 2-colimits (in the ($\infty $,1)-categorical sense), and prove a 2-categorical small object argument to provide weak factorisation systems for coherent functors.
\end{abstract}

\tableofcontents

\section{Introduction}

The class of coherent theories is one that has fairly nice model-theoretic properties (investigated under the name of positive logic), as well as it admits a useful categorical description, hence the toolkit of category theory can also be applied. Moreover this class is suitably general, as any first-order theory can be replaced by a coherent one (with the same class of $\mathbf{Set}$-models). This process is known as Morleyization in the literature.

The categorical viewpoint has a long history, the main developments are the syntactic approach (identifying theories and categories, see e.g.~\cite{makkai}), the model theoretic one (characterising the categories of models in terms of accessibility, as it is given in \cite{rosicky}), the geometric one (lifting the syntactic approach to the level of topos theory, discussed in \cite{sheaves}) and the computational one (type theory). Our work belongs to the first topic, some of whose results are summarised in the first section. 

We will denote the 2-category of coherent categories, coherent functors and all natural transformations by $\mathbf{Coh}$, while $\mathbf{Coh_{\sim }}$ will stand for the (2,1)-category, whose 2-cells are the natural isomorphisms. Our goal is to understand the 2-categorical structure of the latter, in particular to prove

\begin{manualtheorem}{4.12}
$\mathbf{Coh_{\sim}}$ is 2-complete and 2-cocomplete.
\end{manualtheorem}

This result, together with some properties of the underlying 1-category will imply a 2-categorical version of the small object argument for $\mathbf{Coh_{\sim}}$, which is hoped to form the first step towards a 2-dimensional model structure on the category of small coherent categories.

Finally we make the general remark, that most of our results apply in many other contexts, we restrict our attention to $\mathbf{Coh}$ for simplicity. First of all, everything could be stated for a lower level of logical complexity, which would result the same theorems for lex, regular, etc.~categories, with essentially the same (or simpler) proofs. Secondly, the requirement of strictness in the discussion of $(2,1)$-categories could be omitted, at the price that one has to triangulate each diagrams and resist using the terms "(homotopy) commutative square", "pentagon", etc.

\section{Some notions of categorical logic}

In this section we summarise the connection between coherent (also called positive) logic and coherent categories. Everything is taken from \cite{makkai}.

\begin{definition}
A formula is \emph{coherent} if it is built up from atomic formulas using $\wedge , \vee $ and $\exists $.
\end{definition}

\begin{definition}
A formula of the form $\forall x_1 \dots \forall x_n (\varphi \to \psi ) $ is also written as $\varphi \Rightarrow \psi $ and it is called a \emph{sequent}. If $\varphi $ and $\psi $ are coherent formulas then the sequent is said to be coherent.
\end{definition}

\begin{definition}
A \emph{coherent theory} is a set of coherent sequents. 
\end{definition}

\begin{definition}
Given a signature $L=\langle S,\mathbf{R},\mathbf{F} \rangle$, where $S$ is the set of sorts, $\mathbf{R}$ is the set of relation symbols (i.e.~has elements of the form $R\subseteq s_1\times \dots \times s_n$ with $s_i$'s being the related sorts), and $\mathbf{F}$ is the set of function symbols (e.g.~$f:s_1\times \dots \times s_n \to s$), an \emph{$L$-structure} in a category $\mathcal{C}$ associates to each sort $s$ an object $M(s)$ of $\mathcal{C}$, to each relation symbol $R$ a subobject $M(R) \leq M(s_1)\times \dots \times M(s_n)$, and to a function symbol $f$ a morphism $M(f):M(s_1)\times \dots \times M(s_n)\to M(s)$. 
\end{definition}

Our next goal is to interpret first-order formulas in structures (inside a fixed category $\mathcal{C}$). Depending on the complexity of our formula, this will require some extra structure on $\mathcal{C}$.

\begin{definition}
Let $M$ be an $L$-structure in a category $\mathcal{C}$. The \emph{interpretation} of a formula is given by the following steps:
\begin{itemize}
\item If $\vec{x}=(x_1,\dots x_n)$ is a finite sequence of free variables, $x_i$ is of sort $s_i$, then $M(\vec{x})=M(s_1)\times \dots \times M(s_n)$.
\item If $t$ is a term (of sort $s$) whose free variables are from $\vec{x}$, then $M_{\vec{x}}(t)$ will be an arrow $M(\vec{x})\to M(s)$ in the following way:
\begin{itemize}
\item If $t=x_i$, then $M_{\vec{x}}(t)$ is the projection map $M(\vec{x})\to M(x_i)=M(s)$.
\item If $t=f(t_1,\dots t_n)$, then $M_{\vec{x}}(t)$ is the composite $M(\vec{x})\xrightarrow {\langle M_{\vec{x}}(t_1),\dots \rangle} \prod M(s_i) \xrightarrow{M(f)} M(s)$
\end{itemize}
When $\mathcal{C}=\mathbf{Set}$, these are the functions which for a possible evaluation of $\vec{x}$ assign the induced value of $t$.
\item If $\varphi $ is a formula, whose free variables are along $\vec{x}=(x_1,\dots x_n)$, then its interpretation in the context $\vec{x}$ (if it exists) is a subobject $M_{\vec{x}}(\varphi )\leq M(\vec{x})$. It should be readily checked that in the case of $\mathbf{Set}$-models this gives precisely the set of evaluations of $\vec{x}$ which make $\varphi $ valid in $M$.
\begin{itemize}
\item \begin{tikzpicture}
\node (1) at (0,0) {$M_{\vec{x}}(t_1\approx t_2) $};
\node[anchor=base,right=10mm of 1] (2) {$M(\vec{x})$};
\node[anchor=base,right=10mm of 2] (3) {$M(s)$};
\draw[right hook ->] (1)--(2) node [midway,above] {\small{$e$}};
\draw[postaction={transform canvas={yshift=-1mm},draw}] [->] (2) -- (3) node [midway,above] {\small{$M_{\vec{x}}(t_1)$}} node [midway,below] {\small{$M_{\vec{x}}(t_2)$}};
\end{tikzpicture}
is an equalizer.
\item 
\begin{tikzpicture}
\node (3) at (0,0) {$M_{\vec{x}}(R(t_1,\dots t_n))$};
\node[anchor=base,right=10mm of 3] (2a) {$M(\vec{x})$};
\node[anchor=base,below=10mm of 3] (2b) {$M(R)$};
\node[anchor=base,below=10mm of 2a] (1) {$\prod _{i=1}^n M(s_i)$};
\draw[right hook->] (3)--(2a);
\draw[->] (3)--(2b);
\draw[->] (2a)--(1) node [midway,right] {$\langle M_{\vec{x}}(t_1),\dots \rangle$};
\draw[right hook->] (2b)--(1) node [midway,below] {$M(i)$};
\end{tikzpicture}
is a pullback (with $M(i)$ being the subobject $M(R)\hookrightarrow \prod _{i=1}^n M(S_i)$).
\item $M_{\vec{x}}(\bigwedge \Theta)=\bigwedge\{M_{\vec{x}}(\theta):\theta\in \Theta\}$ is the infimum (pullback) of the subobjects $M_{\vec{x}}(\theta)$.
\item $M_{\vec{x}}(\bigvee \Theta)=\bigvee\{M_{\vec{x}}(\theta):\theta\in \Theta\}$ is the supremum of the subobjects $M_{\vec{x}}(\theta)$.
\item $M_{\vec{x}}(\exists y \varphi)$ (where $y$ is not in $\vec{x}$) is the surjective-mono factorisation:
\begin{tikzpicture}
\node (3) at (0,0) {$M_{\vec{x},y}(\varphi)$};
\node[anchor=base,right=10mm of 3] (2a) {$M(\vec{x},y)$};
\node[anchor=base,right=10mm of 2a] (2b) {$M(\vec{x})$};
\node[anchor=base,below=10mm of 2a] (1) {$M_{\vec{x}}(\exists y \varphi)$};
\draw[right hook->] (3)--(2a);
\draw[->] (2a)--(2b) node [midway,above] {$\pi _{\vec{x}}$};
\draw[->>] (3)--(1);
\draw[right hook->] (1)--(2b);
\end{tikzpicture}

(An arrow $f:A\to B$ of $\mathcal{C}$ is surjective iff whenever it factors through a subobject $i:B'\hookrightarrow B$, we get that $i$ is an isomorphism.)
\item $M_{\vec{x}}(\neg \varphi)$ is the biggest (i.e.~contains every other such) subobject $A$ of $M(\vec{x})$, such that $A\wedge M_{\vec{x}}(\varphi) \leq 0_{M(\vec{x})}$, where $0_{M(\vec{x})}$ is the smallest subobject of $M(\vec{x})$.
\item $M_{\vec{x}}(\varphi\to \psi )$ is the biggest subobject $A$ of $M(\vec{x})$ such that $A\wedge M_{\vec{x}}(\varphi) \leq M_{\vec{x}}(\psi )$.
\item $M_{\vec{x}}(\forall y \varphi)$ is the biggest subobject $A$ of $M(\vec{x})$ such that $\pi _{\vec{x}}^{-1}(A)\leq M_{\vec{x},y}(\varphi )$. $\pi _{\vec{x}}^{-1}(A)$ denotes the pullback of $A$ along the projection $\pi _{\vec{x}}:M(\vec{x},y)\to M(\vec{x})$
\end{itemize}
\end{itemize}
\label{intpret}
\end{definition}

\begin{definition}
The sequent $\varphi \Rightarrow \psi$ is \emph{valid} in the structure $M$ (in symbols: $M\models \varphi \Rightarrow \psi $), iff $ M_{\vec{x}}(\varphi ) \leq M_{\vec{x}}(\psi ) $ (where $\vec{x}$ is the collection of all free variables in $\varphi \Rightarrow \psi $).

$M$ is a \emph{model} of the theory $T$ iff all the sequents from $T$ (have interpretation and) are valid in $M$.

A \emph{homomorphism} $\alpha :M\to M'$ of $T$-models consists of an arrow $\alpha _s: M(s)\to M'(s)$ for each sort $s$, for which the square

\[\begin{tikzcd}
	{M(s_1)\times \dots \times M(s_n)} && {M(s)} \\
	\\
	{M'(s_1)\times \dots \times M'(s_n)} && {M'(s)}
	\arrow["{M(f)}", from=1-1, to=1-3]
	\arrow["{\alpha _{s_1} \times \dots \times \alpha _{s_n}}"', from=1-1, to=3-1]
	\arrow["{\alpha _s}", from=1-3, to=3-3]
	\arrow["{M'(f)}", from=3-1, to=3-3]
\end{tikzcd}\]
commutes and the dashed arrow in

\[\begin{tikzcd}
	{M(R)} && {M(s_1)\times \dots \times M(s_n)} \\
	{\alpha (M(R))} \\
	{M'(R)} && {M'(s_1)\times \dots \times M'(s_n)}
	\arrow[hook, from=1-1, to=1-3]
	\arrow["{\alpha _{s_1} \times \dots \times \alpha _{s_n}}", from=1-3, to=3-3]
	\arrow[hook, from=3-1, to=3-3]
	\arrow[two heads, from=1-1, to=2-1]
	\arrow[hook, from=2-1, to=3-3]
	\arrow[dashed, from=2-1, to=3-1]
\end{tikzcd}\]
exists.

The category of $T$-models and homomorphisms in a category $\mathcal{C}$ is denoted by $T$-$mod(\mathcal{C})$.
\label{modelincat}
\end{definition}

\begin{remark}
For the interpretation of coherent logic, it is enough to assume that $\mathcal{C}$ has finite limits, finite sups and surjective-mono factorisation.
\end{remark}

This observation motivates the following definitions:

\begin{definition}
An arrow $f:X\to Y$ in a category $\mathcal{C}$ is an \emph{effective epimorphism} if the pullback
\begin{center}

\begin{tikzpicture}
\node (3) at (0,0) {$X\times _{Y} X$};
\node[anchor=base,right=10mm of 3] (2a) {$X$};
\node[anchor=base,below=10mm of 3] (2b) {$X$};
\node[anchor=base,below=10mm of 2a] (1) {$Y$};
\draw[->] (3)--(2a) node [midway,above] {$\pi$};
\draw[->] (3)--(2b) node [midway,left] {$\pi '$};
\draw[->] (2a)--(1) node [midway,right] {$f$};
\draw[->] (2b)--(1) node [midway,below] {$f$};
\end{tikzpicture}

\end{center}
exists and
\begin{center}

\begin{tikzpicture}
\node (1) at (0,0) {$X\times _{Y} X$};
\node[anchor=base,right=10mm of 1] (2) {$X$};
\node[anchor=base,right=10mm of 2] (3) {$Y$};
\draw[->] (2)--(3) node [midway,above] {\small{$f$}};
\draw[postaction={transform canvas={yshift=-1mm},draw}] [->] (1) -- (2) node [midway,above] {\small{$\pi$}} node [midway,below] {\small{$\pi '$}};
\end{tikzpicture}

\end{center}
is a coequalizer.

\label{effepi}
\end{definition}

\begin{definition}
A \emph{category} $\mathcal{C}$ is \emph{coherent}, if it
\begin{itemize}
\item has finite limits,
\item has images, i.e.~every morphism can be factored as an effective epimorphism followed by a monomorphism,
\item for any object $X$, the poset of its subobjects $Sub(X)$ is a lattice,
\item effective epimorphisms are pullback-stable,
\item for any map $f:X\to Y$, the induced map $f^{-1}:Sub(Y)\to Sub(X)$ is a lattice homomorphism.
\end{itemize}

A \emph{functor} $F:\mathcal{C}\to \mathcal{D}$ is \emph{coherent} if it
\begin{itemize}
\item preserves finite limits,
\item preserves effective epimorphisms,
\item the induced map $Sub(X)\to Sub(F(X))$ is a lattice homomorphism.
\end{itemize}

The 2-category of small coherent categories, coherent functors and all natural transformations is denoted by $\mathbf{Coh}$.
\end{definition}

\begin{remark}
As $F$ preserves finite limits, it follows that monomorphisms are also preserved (since $f:X\to Y$ is mono iff
\begin{center}
\begin{tikzpicture}
\node (3) at (0,0) {$X$};
\node[anchor=base,right=10mm of 3] (2a) {$X$};
\node[anchor=base,below=10mm of 3] (2b) {$X$};
\node[anchor=base,below=10mm of 2a] (1) {$Y$};
\draw[->] (3)--(2a) node [midway,above] {$1_X$};
\draw[->] (3)--(2b) node [midway,left] {$1_X$};
\draw[->] (2a)--(1) node [midway,right] {$f$};
\draw[->] (2b)--(1) node [midway,below] {$f$};
\end{tikzpicture}
\end{center}
is a pullback), so the map $Sub(X)\to Sub(F(X))$ makes sense.
\end{remark}

\begin{remark}
An arrow $f:X\to Y$ in a coherent category is surjective iff it is an effective epimorphism.
\end{remark}

The first part of the proposed correspondence is to replace categories with theories:

\begin{definition}
The \emph{canonical language} of the category $\mathcal{C}$ has the signature $L=L_{\mathcal{C}}$ which contains a sort $\bar{A}$ for every object $A$ of $\mathcal{C}$, and a function symbol $\bar{f}:\bar{A}\to \bar{B}$ for every such arrow $f$ of $\mathcal{C}$ (and nothing else). Then $\mathcal{C}$ is naturally an $L$-structure by the identical interpretation of $L$ (i.e.~sending $\bar{A}$ to $A$ and $\bar{f}$ to $f$). More generally; each functor $F:\mathcal{C}\to \mathcal{D}$ creates an $L$-structure in $\mathcal{D}$.
\end{definition}

The following theorem (2.4.5. in \cite{makkai}) says, that from inside, $\mathcal{C}$ looks similar to $\mathbf{Set}$.

\begin{theorem}
Assume, that $\mathcal{C}$ has finite limits. Then the following diagrams in $\mathcal{C}$ have the stated properties, iff the sequents on their right side (have interpretation and) are valid (in $\mathcal{C}$, as a structure over its canonical language, with the identical interpretation for the signature).

\begin{tabularx}{\textwidth}{cXX}
1. & $A\xrightarrow{f} A$ is the identity on $A$ & $\Rightarrow f(a)\approx a$ \\ 
\hline 
2. & 
\begin{tikzpicture}
\node (3) at (0,0) {$A$};
\node[anchor=base,right=5mm of 3] (2a) {$C$};
\node[anchor=base,below=5mm of 3] (2b) {$B$};
\draw[->] (3)--(2a) node [midway,above] {$h$};
\draw[->] (3)--(2b) node [midway,left] {$f$};
\draw[->] (2b)--(2a) node [midway,below] {$g$};
\end{tikzpicture} is commutative & $\Rightarrow gf(a)\approx h(a)$\\ 
\hline 
3. & $A\xrightarrow{f} B$ is mono & $f(a)\approx f(a') \Rightarrow a\approx a'$ \\ 
\hline 
4. & $A\xrightarrow{f} B$ is surjective & $\Rightarrow \exists a:f(a)\approx b$ \\ 
\hline 
5. & $A$ is the terminal object & $\Rightarrow a\approx a'$ \\ & & $\Rightarrow \exists a: a\approx a$ \\ 
\hline 
6. & $A$ is the initial object & $a\approx a \Rightarrow $ \\ 
\hline 
7. & $A\xleftarrow{f} C\xrightarrow{g} B$ is a product diagram & \small{$f(c)\approx f(c') \wedge g(c)\approx g(c') \Rightarrow c\approx c'$} \\ & & $\Rightarrow \exists c (f(c)\approx a \wedge g(c)\approx b) $ \\ 
\hline 
8. & \begin{tikzpicture}
\node (1) at (0,0) {$E$};
\node[anchor=base,right=5mm of 1] (2) {$A$};
\node[anchor=base,right=5mm of 2] (3) {$B$};
\draw[right hook->] (1)--(2) node [midway,above] {$\epsilon$};
\draw[postaction={transform canvas={yshift=-1mm},draw}] [->] (2) -- (3) node [midway,above] {$f$} node [midway,below] {$g$};
\end{tikzpicture} is an equalizer & $f(a)\approx g(a) \Leftrightarrow \exists e: \epsilon (e)\approx a$ \\ 
\hline 
9. & \makecell[l]{$B\xhookrightarrow{g} X\xhookleftarrow{f_i} A_i$ ($i\in I$). \\ $B$ is the sup of $A_i$-s} & \small{$\bigvee _{i\in I} \exists a_i: f_i(a_i)\approx x \Leftrightarrow \exists b: g(b)\approx x$}\\ 
\hline 
10. & \makecell[l]{$B\xhookrightarrow{g} X\xhookleftarrow{f_i} A_i$ ($i\in I$). \\ $B$ is the inf of $A_i$-s} & \small{$\bigwedge _{i\in I} \exists a_i: f_i(a_i)\approx x \Leftrightarrow \exists b: g(b)\approx x$} \\ 
\end{tabularx}
\label{diagram}
\end{theorem}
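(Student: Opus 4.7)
The verification proceeds row by row; in each case I unfold the recursive Definition \ref{intpret} applied to $\mathcal{C}$ viewed as an $L_{\mathcal{C}}$-structure under the identical interpretation, and match the resulting subobject to the diagrammatic condition on the left.

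For rows 1 and 2, $M_{\vec{x}}(t_1 \approx t_2)$ is by definition the equalizer of $M_{\vec{x}}(t_1)$ and $M_{\vec{x}}(t_2)$, so the sequent $\top \Rightarrow t_1 \approx t_2$ says that this equalizer is all of $M(\vec{x})$, equivalently that the two morphisms coincide; this gives the identity and commutativity statements directly. For row 3, $M_{a,a'}(f(a) \approx f(a'))$ is the kernel pair $A \times_B A$ while $M_{a,a'}(a \approx a')$ is the diagonal, so the sequent amounts to kernel pair $\leq$ diagonal, equivalent to $f$ being mono.

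Rows 4--8 depend on the clause for $\exists$, interpreted through the surjective-mono factorization. For row 4, $M_b(\exists a\, f(a) \approx b)$ is the image under $\pi_b$ of the graph $\Gamma_f \hookrightarrow A \times B$, which is $\mathrm{im}(f)$, and the sequent asserts this equals $B$. Rows 5 and 6 translate the sequents into ``$A$ is subterminal and $A \to 1$ is surjective'' (respectively dually), which combined with the finite-limit hypothesis force $A \cong 1$ (dually $A \cong 0$). Row 7 unfolds to the conjunction of uniqueness (the kernel pair of $\langle f, g \rangle$ is the diagonal) and joint surjectivity of $\langle f, g \rangle : C \to A \times B$, which together constitute the product universal property. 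Row 8 expresses that the image of $\epsilon$ equals the equalizer of $f$ and $g$; since $\epsilon$ is mono, this is its being the equalizer.

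For rows 9 and 10 the key observation is that since each $f_i : A_i \hookrightarrow X$ is mono, $M_x(\exists a_i\, f_i(a_i) \approx x) = A_i$ in $\mathrm{Sub}(X)$, and similarly $M_x(\exists b\, g(b) \approx x) = B$. The biconditional sequents therefore assert $B = \bigvee_i A_i$, respectively $B = \bigwedge_i A_i$ in $\mathrm{Sub}(X)$, which is the defining condition of sup (resp.\ inf). The only real care needed throughout is notational bookkeeping of the projection morphisms in the recursive clauses; the standing finite-limit hypothesis guarantees that the equality and conjunction subobjects in question exist, and each row reduces to reading off a standard universal property from the corresponding syntactic expression.
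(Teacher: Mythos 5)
The paper itself offers no proof of Theorem \ref{diagram} (it is quoted from Makkai--Reyes, 2.4.5), so your argument stands or falls on its own. For rows 1--5 and 7--10 your unwinding of Definition \ref{intpret} is the standard argument and is essentially right: equality subobjects are equalizers (rows 1--3), $\exists$ is image under a projection (row 4), and with the paper's definition of ``surjective'' an arrow that is both mono and surjective is an isomorphism, which is exactly what rows 5 and 7 need; rows 8--10 reduce, as you say, to the observation that for a mono $m$ the interpretation of $\exists e\, m(e)\approx x$ is the subobject $[m]$ itself, and the parenthetical ``have interpretation'' absorbs the existence questions for images, sups and infinite infs.

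The genuine gap is row 6. The sequent $a\approx a\Rightarrow$ has empty conclusion, i.e.\ its right-hand side is the empty disjunction, interpreted as the least subobject $0_A\leq A$; validity says precisely that the maximal subobject of $A$ is the least one, i.e.\ $Sub(A)$ is the one-element poset. This is not in any usable sense ``dual'' to row 5, and your single word ``dually'' (together with the conclusion ``$A\cong 0$'', which already presupposes an initial object that finite limits do not provide) is not an argument. Worse, with only the stated finite-limit hypothesis the implication ``sequent valid $\Rightarrow A$ initial'' is false: in $\mathbf{Set}^{op}$, which has all limits, the object $\emptyset$ has only the identity subobject (monos into it correspond to surjections out of $\emptyset$ in $\mathbf{Set}$), so the sequent is interpreted and valid, yet $\emptyset$ is not initial in $\mathbf{Set}^{op}$, since there is no $\mathbf{Set}^{op}$-morphism from $\emptyset$ to any nonempty set. (The converse direction is fine: an initial object in any category has trivial $Sub$.) So for row 6 you must either place yourself in the coherent (``logical'') setting of Makkai--Reyes --- where least subobjects are pullback-stable and the initial object $0=0_1$ is strict, so $Sub(A)=\{A\}$ gives $A=0_A=A\times 0\cong 0$ --- or supply an equivalent extra argument; as written, your sketch does not survive this row.
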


\begin{definition}
Let $\mathcal{C}$ be a coherent category. Its (coherent) \emph{internal theory} $T_{\mathcal{C}}$ (or $Th(\mathcal{C})$) over the signature $L_{\mathcal{C}}$ consists of those sequents which refer to identities, commutative triangles, finite limits, surjective arrows and finite unions (as it is described above).
\end{definition}

\begin{theorem}
The categories $T_{\mathcal{C}}$-$mod(\mathcal{E})$ and $\mathbf{Coh}(\mathcal{C},\mathcal{E})$ are isomorphic.
\end{theorem}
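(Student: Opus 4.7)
The plan is to construct functors $\Phi : \mathbf{Coh}(\mathcal{C},\mathcal{E}) \to T_{\mathcal{C}}\text{-}mod(\mathcal{E})$ and $\Psi : T_{\mathcal{C}}\text{-}mod(\mathcal{E}) \to \mathbf{Coh}(\mathcal{C},\mathcal{E})$ and then check that they are mutually inverse on the nose. Both directions will rely on Theorem \ref{diagram}, which is the translation dictionary between the categorical properties of diagrams in $\mathcal{C}$ and the validity of sequents over $L_{\mathcal{C}}$.

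For $\Phi$, given a coherent functor $F : \mathcal{C} \to \mathcal{E}$, the composition of $F$ with the identical interpretation of $L_{\mathcal{C}}$ in $\mathcal{C}$ is an $L_{\mathcal{C}}$-structure in $\mathcal{E}$ (this is the general recipe mentioned in the canonical language definition). I then have to check that every sequent of $T_{\mathcal{C}}$ holds in this structure. But the sequents of $T_{\mathcal{C}}$ are exactly those listed on the right side of Theorem \ref{diagram}, encoding identities, commuting triangles, finite limits, surjectivity and finite sups; and $F$ being coherent means precisely that it preserves each of these pieces of structure. So applying Theorem \ref{diagram} inside $\mathcal{E}$ to the $F$-images of the witnessing diagrams in $\mathcal{C}$ gives exactly the validity of the sequents. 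A natural transformation $\alpha : F \Rightarrow G$ is sent to the family $(\alpha_{\bar{A}})_{\bar{A}}$; the naturality squares for the function symbols $\bar{f}$ are literally the commutative squares required in Definition \ref{modelincat}, and since $L_{\mathcal{C}}$ has no primitive relation symbols, no further condition needs to be checked.

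For $\Psi$, given a $T_{\mathcal{C}}$-model $M$ in $\mathcal{E}$, I set $\Psi(M)(A) := M(\bar{A})$ and $\Psi(M)(f) := M(\bar{f})$. Functoriality follows from the identity sequents (type 1 in Theorem \ref{diagram}) and the composition/commutative triangle sequents (type 2), which force $M(\overline{1_A}) = 1_{M(\bar A)}$ and $M(\overline{g \circ f}) = M(\bar g) \circ M(\bar f)$. Preservation of finite limits follows by applying Theorem \ref{diagram} in the reverse direction: the limit-describing sequents (types 5, 7, 8) hold in $M$, hence the corresponding diagrams in $\mathcal{E}$ are terminal, product, or equalizer diagrams; and every finite limit in $\mathcal{C}$ can be built from these. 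Likewise preservation of effective epimorphisms follows from the surjectivity sequents (type 4), and the lattice-homomorphism property of $Sub(A) \to Sub(\Psi(M)(A))$ follows from the sup/inf sequents (types 9, 10) together with the finite-limits part. A homomorphism $M \to M'$ gives, via its components $\alpha_{\bar{A}}$, a natural transformation by the same identification of naturality squares with homomorphism squares.

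The verification $\Phi \Psi = \mathrm{id}$ and $\Psi \Phi = \mathrm{id}$ is then immediate from the definitions: starting with a coherent functor $F$, the structure $\Phi(F)$ has $\Phi(F)(\bar{A}) = F(A)$ and $\Phi(F)(\bar{f}) = F(f)$, and feeding this into $\Psi$ recovers $F$ on objects, morphisms, and on the components of natural transformations; the other composite is analogous. The only mildly delicate step I anticipate is to be careful about the direction of Theorem \ref{diagram}: for $\Phi$ it is used to conclude validity of sequents from the categorical properties, while for $\Psi$ it is used to extract categorical properties from validity; both directions are contained in the theorem, so there is no genuine obstacle, only bookkeeping. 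No choices are involved anywhere, so what one obtains is an honest isomorphism of categories and not merely an equivalence.
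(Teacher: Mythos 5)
Your proposal is correct and is the standard argument; note that the paper itself does not prove this theorem but quotes it as background from Makkai--Reyes, so there is no in-paper proof to diverge from. The only point to flag is that Theorem \ref{diagram} is stated in the paper only for $\mathcal{C}$ with the identical interpretation of $L_{\mathcal{C}}$, so both of your uses of it (for $\Phi$ inside $\mathcal{E}$, and for $\Psi$ to read categorical properties off a model $M$) implicitly invoke its generalization to arbitrary $L_{\mathcal{C}}$-structures in a category with finite limits, images and finite sups --- this is the form in which the cited result actually holds, so it is bookkeeping rather than a gap. Your observation that $L_{\mathcal{C}}$ has only unary function symbols and no relation symbols is exactly what makes the correspondence an isomorphism on the nose rather than a mere equivalence.
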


Now we replace theories with categories. The notion of derivability ($\vdash $) refers to a deduction system which is sound wrt.~every coherent category and which is complete wrt.~Boolean-valued $\mathbf{Set}$-models, see \cite{makkai} for the details.

\begin{definition}
Let $T$ be a coherent theory. Its \emph{syntactic category} $\mathcal{C}_T$ is defined as follows:
\begin{itemize}
\item The objects are equivalence classes of coherent formulas (in context) over the given signature, where $\varphi (\vec{x})\sim \psi (\vec{y})$, iff $\psi (\vec{y})=\varphi (\vec{y}/ \vec{x})$. Note that $[\varphi (\vec{x})]$ and $[\varphi (\vec{x},\vec{y})]$ (with $y$ being an extra variable not present in $\varphi$) corresponds to different objects. This technicality is not essential, as $[\varphi (\vec{x},y)]$ turns out to be isomorphic with $[\varphi (\vec{x}) \wedge y\approx y]$, hence if we require all variables in the context $\vec{x}$ to appear freely in $\varphi $ we get an equivalent category.
\item An arrow $[\varphi (\vec{x})]\xrightarrow{[\theta (\vec{x},\vec{y})]} [\psi (\vec{y})]$ is an equivalence class of formulas, having the following properties:
\begin{itemize}
\item $\vec{x}$ and $\vec{y}$ are disjoint (this can always be assumed, as we can find such representatives of the objects),
\item $T\vdash \theta(\vec{x},\vec{y}) \Rightarrow \varphi (\vec{x}) \wedge \psi (\vec{y})$,
\item $T\vdash \varphi (\vec{x}) \Rightarrow \exists \vec{y} \theta(\vec{x},\vec{y})$,
\item $T\vdash \theta(\vec{x},\vec{y}) \wedge \theta(\vec{x},\vec{y'}) \Rightarrow \vec{y}=\vec{y'}$.
\end{itemize}
$\theta(\vec{x},\vec{y}) \sim \theta'(\vec{x'},\vec{y'})$, iff $T\vdash \theta(\vec{x},\vec{y}) \Leftrightarrow \theta'(\vec{x},\vec{y})$.
\end{itemize}
\end{definition}

\begin{remark}
The required properties for $\theta $ are often referred as being "T-provably functional". This is because these are exactly the conditions which can guarantee, that the interpretation of $\theta $ in a model $M$ is not merely a subobject of $M(\varphi )\times M(\psi )\leq M(\vec{x})\times M(\vec{y})$, but the graph of an arrow from $M(\varphi )$ to $M(\psi )$.
\end{remark}

\begin{theorem}
Given a coherent theory $T$, its syntactic category $\mathcal{C}_T$ is a well-defined coherent category. The categories $T$-$mod(\mathcal{E})$ and $\mathbf{Coh}(\mathcal{C}_T,\mathcal{E})$ are equivalent. If $\mathcal{C}$ is coherent then $\mathcal{C}_{Th(\mathcal{C})}$ and $\mathcal{C}$ are equivalent.
\end{theorem}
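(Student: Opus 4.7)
The plan is to tackle the three claims separately, starting with the construction of $\mathcal{C}_T$, then building the equivalence between models and coherent functors, and finally handling the round-trip $\mathcal{C} \simeq \mathcal{C}_{Th(\mathcal{C})}$.

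First I would check that $\mathcal{C}_T$ is a well-defined category: composition of $[\theta(\vec{x},\vec{y})]: [\varphi] \to [\psi]$ with $[\eta(\vec{y},\vec{z})]: [\psi] \to [\chi]$ is $[\exists \vec{y}\,(\theta \wedge \eta)]$, and the identity on $[\varphi(\vec{x})]$ is $[\varphi(\vec{x}) \wedge \vec{x} \approx \vec{x'}]$. One must verify that provable functionality is preserved under these operations and independent of the representative, using only the deduction rules for coherent logic. Then I would exhibit finite limits, images, and finite joins of subobjects syntactically: the terminal object is (a sort-wise version of) $[x \approx x]$, products are $[\varphi(\vec{x}) \wedge \psi(\vec{y})]$, equalizers are pullbacks of the diagonal; the subobjects of $[\varphi(\vec{x})]$ correspond to coherent formulas $[\psi(\vec{x})]$ with $T \vdash \psi \Rightarrow \varphi$, with meet given by conjunction and join by disjunction; the image of $[\theta]:[\varphi] \to [\psi]$ is $[\exists \vec{x}\,\theta(\vec{x},\vec{y})]$. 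The coherence axioms then reduce to provability statements: pullback-stability of effective epimorphisms becomes the fact that $\exists$ commutes with substitution, and the lattice homomorphism property of $f^{-1}$ uses the Frobenius reciprocity built into the deduction system.

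For the second claim, I would construct the equivalence in both directions. Given a $T$-model $M$ in $\mathcal{E}$, define $F_M([\varphi(\vec{x})]) := M_{\vec{x}}(\varphi)$ and send $[\theta]$ to the unique arrow whose graph is $M_{\vec{x},\vec{y}}(\theta) \hookrightarrow M(\vec{x}) \times M(\vec{y})$; Theorem \ref{diagram} guarantees this is a well-defined coherent functor, because validity of the sequents in $Th$-form translates back into diagrammatic properties in $\mathcal{E}$. Conversely, a coherent functor $F: \mathcal{C}_T \to \mathcal{E}$ pulls back the tautological model on $\mathcal{C}_T$ (sending each sort $s$ to $F([x\approx x]_s)$, and similarly on relations and functions), and one checks the sequents of $T$ are validated because $F$ preserves the constructs used in their syntactic witness. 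Homomorphisms of models correspond to natural transformations, so these assignments are quasi-inverse functors.

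For the third claim, the obvious candidate functor $G: \mathcal{C}_{Th(\mathcal{C})} \to \mathcal{C}$ sends $[\varphi(\vec{x})]$ to its interpretation in $\mathcal{C}$ under the identical reading of the canonical language; this is just the image under the equivalence of the previous paragraph of the identity functor $\mathcal{C} \to \mathcal{C}$ viewed as a $Th(\mathcal{C})$-model. Essential surjectivity is immediate since $A = G([\bar{x} \approx \bar{x}]_{\bar{A}})$. For fullness and faithfulness, observe that a morphism $A \to B$ in $\mathcal{C}$ is recovered from its graph $\Gamma_f \hookrightarrow A \times B$, which, being a subobject, is the interpretation of some coherent formula $\theta(\bar{x},\bar{y})$, and the conditions in Theorem \ref{diagram} (entries 3, 4, 7) show that $\theta$ is exactly $Th(\mathcal{C})$-provably functional iff it is the graph of a morphism.

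The main obstacle I expect is the coherence of $\mathcal{C}_T$, specifically the pullback-stability of effective epimorphisms and the compatibility of $f^{-1}$ with joins: these are genuinely syntactic lemmas about what the coherent deduction system proves, and have to be handled carefully, as the equivalences in the other two parts then follow formally from the interpretation machinery of Definition \ref{intpret} together with Theorem \ref{diagram}.
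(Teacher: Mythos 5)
The paper itself does not prove this theorem: it is quoted as background, with \cite{makkai} as the source, so there is no in-paper argument to compare against. Your outline reconstructs the standard proof from that source, and the overall skeleton (syntactic finite limits, images and joins; the graph/interpretation dictionary; the generic model pulled back along a coherent functor; homomorphisms versus natural transformations) is the right route. One small slip: the terminal object of $\mathcal{C}_T$ is the true formula in the empty context, not a variant of $[x\approx x]$, whose interpretation is the whole sort rather than $1$.

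The genuine gap is in your third paragraph. You assert that ``$\theta$ is $Th(\mathcal{C})$-provably functional iff its interpretation is the graph of a morphism,'' but only one direction of this is available from what you have set up: soundness plus Theorem \ref{diagram} gives that provable functionality implies the interpretation is a graph. The converse, and likewise the faithfulness of $G:\mathcal{C}_{Th(\mathcal{C})}\to\mathcal{C}$ (two coherent $L_{\mathcal{C}}$-formulas with the same interpretation in $\mathcal{C}$ must be $Th(\mathcal{C})$-provably equivalent), does not follow from the diagram theorem alone: validity in the single structure $\mathcal{C}$ does not by itself yield derivability from $Th(\mathcal{C})$, and the deduction system is only complete with respect to a whole class of models. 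What is needed is the standard reduction lemma, proved by induction on the structure of a coherent $L_{\mathcal{C}}$-formula $\theta(\vec{x})$: if $m$ is the actual monomorphism of $\mathcal{C}$ representing the interpretation of $\theta$, then $Th(\mathcal{C})\vdash \theta(\vec{x}) \Leftrightarrow \exists w\, (\bar{m}(w)\approx \vec{x})$; the sequents for limits, images and finite unions listed in Theorem \ref{diagram} are exactly what make this induction go through. With that lemma, fullness and faithfulness of $G$ are immediate, and the same lemma (or the analogous induction over objects of $\mathcal{C}_T$) is what you implicitly use in the second claim when checking that $M\mapsto F_M$ and $F\mapsto$ (pulled-back model) are quasi-inverse, since one must identify $F([\varphi])$ with the interpretation of $\varphi$ in the pulled-back model up to canonical isomorphism. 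So the architecture is correct, but this provable-normal-form lemma is the engine converting semantic facts about $\mathcal{C}$ into derivability from $Th(\mathcal{C})$, and without naming it the third claim (and the quasi-inverse check in the second) does not close.
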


\section{Limits and weak colimits}

The main ingredient for the existence of weak colimits is the following fact:

\begin{proposition}
Given an arbitrary subcategory $i:\mathcal{C}\hookrightarrow \mathcal{E}$ of a coherent category $\mathcal{E}$, it is included in a coherent subcategory $\tilde{\mathcal{C}}\hookrightarrow \mathcal{E}$ (with coherent embedding), such that $|\tilde{\mathcal{C}}|\leq \aleph _0 \cdot |\mathcal{C}|$. (Let's say $|\mathcal{C}|$ is defined to be $|Arr(\mathcal{C})|$.)
\label{solution}
\end{proposition}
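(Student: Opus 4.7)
The plan is to build $\tilde{\mathcal{C}}$ by an $\omega$-step closure of $\mathcal{C}$ inside $\mathcal{E}$, performing all coherent constructions directly in $\mathcal{E}$ and collecting the witnesses. Concretely, I set $\mathcal{C}_0=\mathcal{C}$ and build an ascending chain $\mathcal{C}_0\subseteq \mathcal{C}_1 \subseteq \dots$ of subcategories of $\mathcal{E}$, with $\tilde{\mathcal{C}}=\bigcup_{n<\omega}\mathcal{C}_n$. To form $\mathcal{C}_{n+1}$ from $\mathcal{C}_n$, I do the following: for every finite-limit diagram in $\mathcal{C}_n$ (empty, pair of objects, pair of parallel arrows, cospan) I choose a limit cone in $\mathcal{E}$; for every arrow of $\mathcal{C}_n$ I choose an image factorisation in $\mathcal{E}$; for every finite family of subobjects over a common base in $\mathcal{C}_n$ I adjoin their join in $\mathcal{E}$ (throwing in a zero subobject and an initial object as well); and for each universal construction so installed and each cone or cocone over it whose components lie in $\mathcal{C}_n$, I adjoin the unique mediating arrow supplied by $\mathcal{E}$, finally closing under composition and identities.

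The cardinality estimate is routine bookkeeping: the data enumerated at stage $n$ consists of finite tuples of arrows of $\mathcal{C}_n$ and so has size at most $\aleph_0\cdot |\mathcal{C}_n|$, and each piece contributes only boundedly many new arrows. Hence $|\mathcal{C}_{n+1}|\leq \aleph_0\cdot |\mathcal{C}_n|$, by induction $|\mathcal{C}_n|\leq \aleph_0\cdot |\mathcal{C}|$ for every $n$, and the countable union preserves this bound.

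For the verification: any finite configuration in $\tilde{\mathcal{C}}$ already lives in some $\mathcal{C}_n$ by finiteness, so the relevant limit, image, or join was installed in $\mathcal{C}_{n+1}$; any competing cone whose apex lies in $\tilde{\mathcal{C}}$ lies in some $\mathcal{C}_m$ and receives its mediating arrow in $\mathcal{C}_{m+1}$. Thus $\tilde{\mathcal{C}}$ has finite limits, images and finite joins of subobjects, all of which coincide with those of $\mathcal{E}$. Stability of effective epimorphisms under pullback and preservation of finite joins by the inverse-image functors $f^{-1}$ are then inherited from $\mathcal{E}$ for free, so $\tilde{\mathcal{C}}$ is coherent and the inclusion $\tilde{\mathcal{C}}\hookrightarrow \mathcal{E}$ is a coherent functor by construction. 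The only real obstacle is precisely the cross-stage bookkeeping of mediating arrows: a candidate cone for a construction installed at stage $n$ may only appear once its apex has been manufactured at a later stage, which is exactly why iterating $\omega$ times (rather than once) is necessary---each cone appearing at any finite stage $m$ receives its mediating arrow in stage $m+1$, and every required arrow is eventually swallowed by the union.
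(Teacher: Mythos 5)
Your closure strategy is a genuinely different route from the paper's (the paper forms the theory of identities and commutative triangles over $L_{\mathcal{C}}$ and takes the image of the induced coherent functor $\mathcal{C}_T\to\mathcal{E}$ from the syntactic category, so smallness and coherence come for free from $\mathcal{C}_T$), and your cardinality bookkeeping and the existence of finite limits, images and joins in $\tilde{\mathcal{C}}$ do work as you describe. The gap is precisely at the sentence you dismiss as ``for free'': pullback-stability of effective epimorphisms and the requirement that each $f^{-1}$ be a lattice homomorphism are statements about the effective epimorphisms and the subobject posets \emph{of} $\tilde{\mathcal{C}}$, and the arrows witnessing them in $\mathcal{E}$ are comparison maps that your clauses never adjoin: you only add mediating arrows \emph{into} installed limits and \emph{out of} installed coequalizers and joins, while the needed comparisons point the other way.

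Concretely: let $e$ be an effective epimorphism of $\tilde{\mathcal{C}}$, $f$ an arrow of $\tilde{\mathcal{C}}$, and $p$ the projection of the installed pullback of $e$ along $f$. Then $p$ is an effective epi of $\mathcal{E}$, and its installed image factorisation $p=m\circ e'$ has $m$ a monomorphism which is invertible in $\mathcal{E}$; to show $p$ is the coequalizer of its kernel pair in $\tilde{\mathcal{C}}$ you must factor each coequalizing $t=s'\circ e'$ through $p$, and the only candidate is $s'\circ m^{-1}$, where $m^{-1}$ lies in $\mathcal{E}$ but no clause of your recursion puts it into $\tilde{\mathcal{C}}$. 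Similarly, $f^{-1}(m_1\vee m_2)$ and $f^{-1}(m_1)\vee f^{-1}(m_2)$ coincide as subobjects in $\mathcal{E}$, but the connecting arrow from the installed pullback of the join to the installed join of the pullbacks is neither a mediating arrow into an installed limit nor one out of an installed join, so in $Sub_{\tilde{\mathcal{C}}}(X)$ the two elements may remain distinct and the lattice-homomorphism axiom can fail. Both defects are repairable without harming the bound, e.g.\ by adding the clause that for any two monomorphisms of $\mathcal{C}_n$ into a common object which are isomorphic as subobjects in $\mathcal{E}$ one also adjoins the connecting isomorphism (applied to $m$ and the identity this yields $m^{-1}$), but as written the verification of these two axioms is missing, and it is exactly the bookkeeping that the paper's syntactic-category argument is designed to avoid.
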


\begin{proof}
Form the theory $T$ over the signature $L_{\mathcal{C}}$ which consists of the sequents for identities and commutative triangles. Then $i$ corresponds to a $T$-model in $\mathcal{E}$, which induces a coherent functor $\mathcal{C}_T\to \mathcal{E}$, whose image has a size smaller or equal to $\aleph _0 \cdot |L_{\mathcal{C}}|=\aleph _0\cdot |\mathcal{C}|$ and contains $\mathcal{C}$ as a subcategory.
\end{proof}

We start with the existence of limits.

\begin{proposition}
$\mathbf{Coh}$ has pullbacks along isofibrations, and the forgetful functor $U:\mathbf{Coh}\to \mathbf{Cat}$ preserves them.
\label{cohpb}
\end{proposition}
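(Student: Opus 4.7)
The plan is to take $\mathcal{P}:=\mathcal{A}\times_{\mathcal{C}}\mathcal{B}$ to be the strict $1$-pullback in $\mathbf{Cat}$: its objects are pairs $(a,b)$ with $F(a)=G(b)$ and its morphisms are pairs $(f,g)$ with $F(f)=G(g)$, with the evident projections $\pi_{\mathcal{A}},\pi_{\mathcal{B}}$. This already solves the universal property in $\mathbf{Cat}$, so it remains to endow $\mathcal{P}$ with a coherent structure, show the projections are coherent, and verify the universal property in $\mathbf{Coh}$.

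For finite limits: given $D\colon J\to\mathcal{P}$, form $a^{\ast}:=\lim\pi_{\mathcal{A}}D$ in $\mathcal{A}$ and $b^{\ast}:=\lim\pi_{\mathcal{B}}D$ in $\mathcal{B}$. Since $F$ and $G$ preserve finite limits, there is a canonical isomorphism $\phi\colon F(a^{\ast})\xrightarrow{\sim}G(b^{\ast})$ in $\mathcal{C}$; because $F$ is an isofibration, $\phi$ lifts to an isomorphism $\psi\colon a^{\ast}\xrightarrow{\sim} a'$ in $\mathcal{A}$ with $F(a')=G(b^{\ast})$. Then $(a',b^{\ast})\in\mathcal{P}$, and composing the $\mathcal{A}$-legs of the limit cone with $\psi^{-1}$ produces a cone in $\mathcal{P}$; a short diagram chase (using universality in $\mathcal{A}$, $\mathcal{B}$ and the fact that $F(a')=G(b^{\ast})$ is a limit in $\mathcal{C}$) shows this is a limit in $\mathcal{P}$ that both projections preserve on the nose.

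For effective-epi/mono factorizations of $(f,g)\colon(a,b)\to(a',b')$, factor $f=m_{\mathcal{A}}e_{\mathcal{A}}$ through $a_0$ and $g=m_{\mathcal{B}}e_{\mathcal{B}}$ through $b_0$; both $F(a_0)$ and $G(b_0)$ are images of $F(f)=G(g)$ in $\mathcal{C}$, so the canonical comparison isomorphism between them lifts along $F$ to yield $F(a_0)=G(b_0)$ and $(a_0,b_0)\in\mathcal{P}$. The componentwise pair $(e_{\mathcal{A}},e_{\mathcal{B}})$ is an effective epimorphism in $\mathcal{P}$: its kernel pair is computed componentwise by the previous step, and any $\mathcal{P}$-map out of its codomain coequalizing it projects to coequalizer data in $\mathcal{A}$ and $\mathcal{B}$ whose unique componentwise factorisations assemble into a $\mathcal{P}$-arrow, because $F(e_{\mathcal{A}})=G(e_{\mathcal{B}})$ is epi in $\mathcal{C}$. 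Subobjects of $(a,b)$ thus correspond to pairs $(s\hookrightarrow a,t\hookrightarrow b)$ with $F(s)=G(t)$ as subobjects of $F(a)=G(b)$, and componentwise meets, joins and pullbacks (with isofibration adjustments for joins) together with pullback-stability of effective epis in $\mathcal{A}$ and $\mathcal{B}$ give $\mathcal{P}$ a coherent structure for which $\pi_{\mathcal{A}}$ and $\pi_{\mathcal{B}}$ are by construction coherent.

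Given any compatible coherent pair $H\colon\mathcal{D}\to\mathcal{A}$, $K\colon\mathcal{D}\to\mathcal{B}$ with $FH=GK$, the unique $\mathbf{Cat}$-factorisation $(H,K)\colon\mathcal{D}\to\mathcal{P}$ inherits preservation of finite limits, effective epimorphisms and subobject joins from $H$ and $K$, since all of these are witnessed componentwise in $\mathcal{P}$; this gives the universal property in $\mathbf{Coh}$. I expect the main obstacle to be the second step: checking that the componentwise factorisation really is an effective-epi/mono pair \emph{inside} $\mathcal{P}$, which requires transporting the coequalizer-of-kernel-pair description across the two projections—this is possible precisely because finite limits in $\mathcal{P}$ are componentwise, a property that hinges on $F$ being an isofibration.
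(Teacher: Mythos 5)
Your construction is essentially the paper's own proof: the paper builds the strict $\mathbf{Cat}$-pullback, uses the isofibration to adjust componentwise limit/factorisation/join constructions so the two legs agree on the nose (Lemma \ref{pblim}), and uses that coherent functors preserve coequalizers of kernel pairs (Lemma \ref{coeq}) to get that the projections preserve and jointly reflect effective epis and monos, exactly as you do by hand. The only difference is presentational — you inline the two auxiliary lemmas instead of isolating them — so the argument is correct and matches the paper's route.
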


\begin{lemma}
Take a diagram of the form $\mathcal{C}\xrightarrow{F}\mathcal{D}\xleftarrow{F'}\mathcal{C'}$ in $\mathbf{Cat}$, where either $F$ or $F'$ is an isofibration. If all three categories have certain types of limits or colimits, and these are preserved by both functors, then $\mathcal{C}\times _{\mathcal{D}}\mathcal{C'}$ will also admit these (co)limits, and they will be preserved by the projection maps. Moreover they are reflected by the pair of the projections.
\label{pblim}
\end{lemma}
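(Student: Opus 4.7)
The strict pullback $\mathcal{P} := \mathcal{C}\times_{\mathcal{D}}\mathcal{C'}$ has as objects pairs $(C,C')$ with $F(C)=F'(C')$, and as morphisms pairs $(f,f')$ with $F(f)=F'(f')$. Given a diagram $D:\mathcal{I}\to\mathcal{P}$, compose with the projections to obtain $D_1:\mathcal{I}\to\mathcal{C}$ and $D_2:\mathcal{I}\to\mathcal{C'}$, and form the limits $(L,\lambda_i)$ and $(L',\lambda'_i)$. Since $F$ and $F'$ preserve limits, both $(F(L),F(\lambda_i))$ and $(F'(L'),F'(\lambda'_i))$ are limit cones for the same diagram $FD_1=F'D_2$ in $\mathcal{D}$, so there is a unique iso $\alpha:F(L)\to F'(L')$ with $F'(\lambda'_i)\circ\alpha=F(\lambda_i)$ for all $i$. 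The first step is to realise $(L,L')$ as an honest object of the \emph{strict} pullback, which is where isofibrancy enters.

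\textbf{Using the isofibration.} Assume WLOG that $F'$ is an isofibration. Lift $\alpha$ to an iso $\tilde\alpha:\tilde L'\to L'$ in $\mathcal{C'}$ with $F'(\tilde L')=F(L)$ and $F'(\tilde\alpha)=\alpha$. Set $\tilde\lambda'_i:=\lambda'_i\circ\tilde\alpha$; then $(\tilde L',\tilde\lambda'_i)$ is still a limit cone for $D_2$, and the equality $F(\lambda_i)=F'(\tilde\lambda'_i)$ shows that $(L,\tilde L')$ together with $(\lambda_i,\tilde\lambda'_i)$ forms a cone on $D$ in $\mathcal{P}$. For universality, any cone $((X,X'),(\mu_i,\mu'_i))$ over $D$ factors uniquely through the limits as $u:X\to L$ and $u':X'\to L'$; set $\tilde u':=\tilde\alpha^{-1}\circ u'$. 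The equality $F(u)=F'(\tilde u')$ follows from the fact that both sides are the unique map $F(X)=F'(X')\to F(L)$ induced by the cone $F(\mu_i)=F'(\mu'_i)$ in the limit $F(L)$ of $FD_1$ (using that $F$ preserves it). Uniqueness in $\mathcal{P}$ is automatic since a map there is determined by its projections. Preservation by the projections is built into the construction.

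\textbf{Reflection.} Suppose $((X,X'),(\mu_i,\mu'_i))$ is a cone in $\mathcal{P}$ whose two projections are limit cones in $\mathcal{C}$ and $\mathcal{C'}$. Given any other cone $((Y,Y'),(\nu_i,\nu'_i))$, obtain unique $f:Y\to X$ and $f':Y'\to X'$ from the individual universal properties. The compatibility $F(f)=F'(f')$ again follows from $F$ preserving the limit of $D_1$: the equations $F(\mu_i)\circ F(f)=F(\nu_i)=F'(\nu'_i)=F'(\mu'_i)\circ F'(f')=F(\mu_i)\circ F'(f')$ together with the uniqueness part of the limit $F(X)$ force $F(f)=F'(f')$. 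Thus $(f,f')$ is a morphism of $\mathcal{P}$, and uniqueness is clear componentwise.

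\textbf{Colimits.} The dual argument works verbatim: an isofibration lifts isos in both directions (since isos are invertible), so given the canonical iso between $F$ of the colimit of $D_1$ and $F'$ of the colimit of $D_2$ we may again rectify to a strict equality in one of the two components. The only genuine obstacle throughout is this rectification step — everything else is a routine diagram chase enabled by the fact that $F$ and $F'$ preserve (co)limits, so universal properties in $\mathcal{D}$ are inherited from those in $\mathcal{C}$ and $\mathcal{C'}$.
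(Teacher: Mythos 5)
Your proof is correct and follows essentially the same route as the paper's: project the diagram, take limits in $\mathcal{C}$ and $\mathcal{C'}$, use preservation and uniqueness of limits to get the comparison isomorphism in $\mathcal{D}$, rectify one limit cone via the isofibration so the two images agree strictly, and then verify the universal property and reflection componentwise. You simply spell out the universality and reflection checks that the paper leaves as "a similar argument".
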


\begin{proof}
Take a diagram of that fixed type (i.e.~a functor from the fixed index category) in $\mathcal{C}\times _{\mathcal{D}}\mathcal{C'}$. It is sent to the same type of diagrams in $\mathcal{C}$ and $\mathcal{C'}$, so by assumption, we can take a (co)limiting (co)cone over them, which are preserved by $F$ and $F'$. As (co)limits are unique up to unique isomorphism, there is an isomorphism $(i)$ in $\mathcal{D}$, that connects the tips of the images and makes everything commute. As (let's say) $F$ was an isofibration, we can modify our (co)cone in $\mathcal{C}$ by the composition of a preimage of $i$, hence we can assume, that the images at $F$ and at $F'$ are the same. By the pullback-construction, there is a (co)cone in $\mathcal{C}\times _{\mathcal{D}}\mathcal{C'}$, that is mapped to the chosen (co)limiting ones by the projections. A similar argument shows that it must be a (co)limiting one, the preservation (and the reflection by the pair) follows from the construction.
\end{proof}

\begin{lemma}
The coequalizer of kernel pairs (as in Definition \ref{effepi}) exists in every coherent category, and it is preserved by every coherent functor.
\label{coeq}
\end{lemma}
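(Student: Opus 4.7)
The plan is to construct the coequalizer explicitly via the image factorisation. Given a morphism $f:X\to Y$ in a coherent category $\mathcal{C}$, factor it as $f = m\circ e$ with $e:X\twoheadrightarrow I$ an effective epimorphism and $m:I\hookrightarrow Y$ a monomorphism (such a factorisation exists by the definition of coherent category). I would then claim that $e$ itself is the coequalizer of the kernel pair $X\times_Y X \rightrightarrows X$ of $f$.

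The crux is to identify the kernel pair of $f$ with the kernel pair of $e$. Since $m$ is a monomorphism, its diagonal $\Delta_m: I\to I\times_Y I$ is an isomorphism, and a standard pasting-of-pullbacks argument then produces a canonical isomorphism $X\times_Y X \cong X\times_I X$ that intertwines the two projections. By Definition \ref{effepi}, the effective epimorphism $e$ is the coequalizer of $X\times_I X \rightrightarrows X$, hence also of $X\times_Y X \rightrightarrows X$.

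For preservation by a coherent functor $F:\mathcal{C}\to\mathcal{D}$, observe that $F$ preserves finite limits (so it preserves both the kernel pair of $f$ and the mono-ness of $m$) and preserves effective epimorphisms (so $F(e)$ is an effective epi). Consequently $F(f) = F(m)\circ F(e)$ is an image factorisation in $\mathcal{D}$, and rerunning the previous paragraph inside $\mathcal{D}$ shows that $F(e)$ is the coequalizer of the kernel pair of $F(f)$, which is canonically $F(X\times_Y X)\rightrightarrows F(X)$ by preservation of pullbacks.

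The only substantive point is the identification of the two kernel pairs; everything else is a routine unfolding of the definitions of effective epimorphism, image factorisation and coherent functor. I expect no real obstacle beyond keeping the bookkeeping of this identification tidy.
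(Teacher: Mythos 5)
Your proposal is correct and takes essentially the same route as the paper: factor $f$ as an effective epimorphism $e$ followed by a monomorphism, observe that the mono part does not change the pullback so the kernel pair of $f$ coincides with that of $e$, and conclude from Definition \ref{effepi} that $e$ is the desired coequalizer, with preservation following because a coherent functor preserves the factorisation and the kernel pair. The paper leaves the preservation step implicit while you spell it out, but there is no difference in substance.
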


\begin{proof}
Using the notation of Definition \ref{effepi}, factor $f$ as $X\xrightarrow{e} \bar{X} \xrightarrow{i} Y$, where $e$ is an effective epi and $i$ is mono. It follows that
\begin{center}
\begin{tikzpicture}
\node (3) at (0,0) {$X\times _Y X$};
\node[anchor=base,right=10mm of 3] (2a) {$X$};
\node[anchor=base,below=10mm of 3] (2b) {$X$};
\node[anchor=base,below=10mm of 2a] (1) {$\bar{X}$};
\draw[->] (3)--(2a) node [midway,above] {$\pi$};
\draw[->] (3)--(2b) node [midway,left] {$\pi '$};
\draw[->] (2a)--(1) node [midway,right] {$e$};
\draw[->] (2b)--(1) node [midway,below] {$e$};
\end{tikzpicture}
\end{center}
is also a pullback, hence $e=coeq(\pi ,\pi ')$, as by definition $e$ is the coequalizer of its kernel pair.

This also proves the uniqueness (up to unique isomorphism) of such factorisations. 
\end{proof}

\begin{proof}[Proof of Proposition \ref{cohpb}]
We will use the notation of Lemma \ref{pblim}, and check that $\mathcal{C}\times _{\mathcal{D}} \mathcal{C'}$ is coherent:
\begin{itemize}
\item finite limits: Lemma \ref{pblim}
\item images: Using that $F$ was an isofibration, we get a factorisation in \\ $\mathcal{C}\times _{\mathcal{D}}\mathcal{C'}$ from the factorisations in $\mathcal{C}$ and in $\mathcal{C'}$. By the previous lemmas, the projections $\pi $ and $\pi '$ preserve, and together reflect effective epimorphisms and monomorphisms.
\item joins: By the existence of finite limits, it is enough to see that finite (possibly empty) joins of subobjects exist. This follows from a similar argument to the one in the proof of Lemma \ref{pblim}.
\item pullback-stability: $\pi $ and $\pi '$ preserve, and together reflect pullbacks.
\end{itemize}
\end{proof}

\begin{corollary}
$\mathbf{Coh}$ has finite products.
\label{finprod}
\end{corollary}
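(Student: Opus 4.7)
The plan is to realise binary products as pullbacks over the terminal coherent category and invoke Proposition \ref{cohpb}. The empty product will be handled by identifying that terminal object directly.

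First I would check that the one-object, one-morphism category $\mathbf{1}$ is coherent: finite limits exist trivially, the identity factorisation serves as the effective-epi/mono factorisation, $Sub(\ast )$ is a singleton, and the pullback-stability and lattice-homomorphism conditions are vacuous. For any coherent $\mathcal{C}$ the unique functor $!_{\mathcal{C}}:\mathcal{C}\to \mathbf{1}$ is (trivially) coherent, and it is the only such functor, so $\mathbf{1}$ is terminal in $\mathbf{Coh}$, giving the empty product.

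Next, I would observe that $!_{\mathcal{C}}$ is an isofibration: the only isomorphism in $\mathbf{1}$ is $1_{\ast }$, and for any $C\in \mathcal{C}$ with $!_{\mathcal{C}}(C)=\ast $ we can lift $1_{\ast }$ to $1_C$. Hence for any two coherent categories $\mathcal{C}, \mathcal{D}$ the cospan $\mathcal{C}\xrightarrow{!}\mathbf{1}\xleftarrow{!}\mathcal{D}$ satisfies the hypothesis of Proposition \ref{cohpb}, so the pullback exists in $\mathbf{Coh}$ and is carried by $U$ to the usual pullback $\mathcal{C}\times _{\mathbf{1}}\mathcal{D}\cong \mathcal{C}\times \mathcal{D}$ in $\mathbf{Cat}$.

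Finally, this object is a binary product in $\mathbf{Coh}$: for any coherent $\mathcal{E}$ and coherent functors $F:\mathcal{E}\to \mathcal{C}$, $G:\mathcal{E}\to \mathcal{D}$, the two composites to $\mathbf{1}$ agree (both being $!_{\mathcal{E}}$), so the universal property of the pullback supplies a unique coherent functor $\mathcal{E}\to \mathcal{C}\times \mathcal{D}$ with the right projections. Iterating gives all nonempty finite products, and combining with the terminal object handles the empty case. There is no real obstacle here: all the serious work sits inside Proposition \ref{cohpb}, and the only remaining points — that $\mathbf{1}$ is terminal in $\mathbf{Coh}$ and that projections to it are isofibrations — are essentially formal.
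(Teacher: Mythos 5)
Your proposal is correct and matches the paper's argument exactly: the paper's one-line proof is precisely that the unique map to the terminal coherent category is an isofibration, so finite products arise as pullbacks via Proposition \ref{cohpb}. The extra details you supply (coherence and terminality of $\mathbf{1}$, the isofibration check, the universal-property verification) are just the routine steps the paper leaves implicit.
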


\begin{proof}
The unique map to the terminal object is an isofibration.
\end{proof}

\begin{remark}
Since limits, subobjects, unions, composition of arrows, etc. are defined coordinate-wise, it is equally easy to see the existence of arbitrary products.
\end{remark}

\begin{theorem}
$\mathbf{Coh}$ has weak colimits.
\label{complete}
\end{theorem}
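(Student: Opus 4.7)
The plan is to encode cocones on a given diagram $D \colon I \to \mathbf{Coh}$ as models of a single coherent theory $T_D$, and then to use the syntactic-category correspondence from Section 1 to produce a weakly universal cocone. The construction parallels the proof of Proposition \ref{solution}, except that the theory now records not just identities and commutative triangles in a single category, but the full coherent structure of every vertex $D(i)$ together with the transition functors between them.

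Concretely, I would take the many-sorted signature $L_D$ whose sorts are the pairs $(i,X)$ with $i \in I$ and $X$ an object of $D(i)$, and whose function symbols consist of the arrows of each $D(i)$ (between the corresponding sorts $(i,-)$) together with additional symbols $f_X \colon (i,X) \to (j, D(f)(X))$ for every $f \colon i \to j$ in $I$ and every $X \in D(i)$. The theory $T_D$ then consists of: (i) the translation along $L_{D(i)} \hookrightarrow L_D$ of each internal theory $Th(D(i))$; (ii) sequents asserting that the family $\{f_X\}_X$ defines a \emph{coherent} functor for every arrow $f$ of $I$ --- preservation of identities, commutative triangles, the terminal object, products, equalizers, surjections, and finite joins of subobjects can each be expressed as a coherent sequent by Theorem \ref{diagram}; and (iii) the functoriality equations $(\mathrm{id}_i)_X = \mathrm{id}_{(i,X)}$ and $(f'f)_X = f'_{D(f)(X)} \circ f_X$ for composable pairs in $I$. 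Since every $D(i)$ is small and $I$ is small, $T_D$ is itself a small coherent theory, so its syntactic category $\mathcal{C}_{T_D}$ lies in $\mathbf{Coh}$.

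By design, a $T_D$-model in a coherent category $\mathcal{E}$ is exactly a cocone $D \to \mathcal{E}$ in $\mathbf{Coh}$. The equivalence $T_D$-$mod(\mathcal{E}) \simeq \mathbf{Coh}(\mathcal{C}_{T_D}, \mathcal{E})$, applied to the identity $T_D$-model on $\mathcal{C}_{T_D}$ itself, produces a canonical cocone $D \to \mathcal{C}_{T_D}$; applied to any other cocone $D \to \mathcal{E}$ it yields a coherent functor $\mathcal{C}_{T_D} \to \mathcal{E}$ through which the cocone factors. The main obstacle I expect is the bookkeeping in step (ii): one must check that every condition needed on a coherent functor is captured by a coherent sequent, and Theorem \ref{diagram} is the dictionary that makes this possible. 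Because the correspondence $T_D$-$mod(\mathcal{E}) \simeq \mathbf{Coh}(\mathcal{C}_{T_D}, \mathcal{E})$ is only an equivalence of categories and not an isomorphism (as recorded in Section 1), the factoring functor $\mathcal{C}_{T_D} \to \mathcal{E}$ is unique only up to invertible natural transformation; this is exactly why the result is a weak, rather than a strict, colimit, which matches the statement of the theorem.
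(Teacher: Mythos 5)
There is a genuine gap, and it is the strictness of the cocones. A weak colimit in the 1-category $\mathbf{Coh}$ is a cocone whose triangles commute on the nose and through which every other (strictly commuting) cocone factors on the nose; only the \emph{uniqueness} of the factoring map is dropped. Your theory $T_D$ cannot capture this: coherent logic has no way to assert equality of objects, so the comparison symbols $f_X\colon (i,X)\to (j,D(f)(X))$ can at best be forced (via items 3 and 4 of Theorem \ref{diagram}, plus naturality sequents) to be natural isomorphisms. Hence a $T_D$-model in $\mathcal{E}$ is not a cocone but a pseudococone: coherent functors $F_i$ together with coherent natural isomorphisms $F_i\cong F_j\circ D(f)$. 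Correspondingly, the canonical model on $\mathcal{C}_{T_D}$ gives functors $c_i\colon D(i)\to\mathcal{C}_{T_D}$ with $c_j\circ D(f)$ only \emph{isomorphic} to $c_i$ (the objects $[x\approx x]$ of sort $(i,X)$ and of sort $(j,D(f)X)$ are isomorphic, not equal), so your candidate is not even a cocone in the 1-categorical sense; and since $T_D$-$mod(\mathcal{E})\simeq\mathbf{Coh}(\mathcal{C}_{T_D},\mathcal{E})$ is only an equivalence, a strict cocone $(F_i)$ yields a functor $G$ with $G\circ c_i\cong F_i$ rather than $G\circ c_i=F_i$. Your closing remark conflates the two kinds of weakness: non-uniqueness of the factoring map is what "weak colimit" allows, but non-strictness of the factorization itself is not. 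What you have constructed is essentially a bicolimit-style universal object (in the spirit of Section 4), not the weak colimit asserted by the theorem, and no argument is given for strictifying either the canonical pseudococone or the factorizations.

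The paper sidesteps exactly this issue by never asking the syntactic category to classify the whole cocone. It runs a solution-set argument: the category of (strict) cocones is $*\downarrow F$ for $F=\lim\mathbf{Coh}(d_i,-)$, which has products because $\mathbf{Coh}$ does and $F$ preserves them; then Proposition \ref{solution} (where the syntactic category is used only to embed a subcategory into a small coherent subcategory, a genuinely strict inclusion) shows every cocone factors strictly through a cocone whose tip has bounded cardinality, giving a weakly initial family whose product is a weak initial object. If you want to keep your "classifying theory" idea, you would have to make the models be strict cocones, e.g.\ by first forming a strict quotient of the sorts identifying $(i,X)$ with $(j,D(f)X)$ (closer to the colimit-in-$\mathbf{Cat}$ construction of Section 2--3) rather than adding comparison function symbols; as written, the proposal does not prove the stated theorem.
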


\begin{proof}
Fix a diagram $d_{\bullet }:\mathcal{I}\to \mathbf{Coh}$, and let $F:\mathbf{Coh}\to \mathbf{Set}$ be the functor $lim _{\mathcal{I}^{op}} \mathbf{Coh}(d_i, -)$. The existence of a weak colimit is equivalent to the existence of a weak initial object in the category $*\downarrow F$ (see: \cite{maclane} Theorem V.6.3.). As $\mathbf{Coh}$ has products and $F$ preserves them, $*\downarrow F$ has products, so it is enough to find a weakly initial family, as in this case the product of its elements is a weak initial object.

An element of $lim _{\mathcal{I}^{op}} \mathbf{Coh}(d_i, C)$ is a compatible family of functors $\{f_i:d_i\to C\}$. To prove the existence of a weakly initial family, set $C'$ to be the subcategory with objects $\bigcup Ob(Im\ (f_i))$ and with arrows the composites of arrows from $\bigcup Arr(Im\ (f_i))$. The size of $C'$ is bounded by the size of $\sum |d_i|$, but it is not necessarily coherent.

Using Corollary \ref{solution}, the inclusion $C'\to C$ factors through a coherent subcategory $g: \tilde{C}\to C$, where the size of $\tilde{C}$ is still limited by $\sum |d_i|$. Now it follows that each $f_i$ factors as $g\circ \tilde{f_i}$, and in this case $\tilde{f_i}:d_i\to \tilde{C}$ must also be coherent. 

This shows that for some fixed $\kappa \geq \sum |d_i|$ the set of coherent categories of cardinality $\leq \kappa$ (one from each isomorphism class) is a solution set, namely the set of all cocones (over the fixed diagram) with top element having cardinality $\leq \kappa$ is a weakly initial family.
\end{proof}

\section{Filtered colimits}

First recall the following basic result on the construction of general colimits (cf.~\cite{maclane} Theorem V.2.1.).

\begin{theorem}
Let $d_\bullet :\mathcal{I}\to \mathcal{C}$ be a diagram, where $\mathcal{C}$ is a cocomplete category. Its colimit can be computed as the coequalizer

\begin{center}
\begin{tikzpicture}
\node (1) at (0,0) {$\bigsqcup _{f\in \mathcal{I}_1} dom(f)$};
\node[anchor=base,right=15mm of 1] (2) {$\bigsqcup _{A\in \mathcal{I}_0} A$};
\node[anchor=base,right=10mm of 2] (3) {$colim\ d_\bullet $};
\draw[->] (2)--(3) node [midway,above] {\small{$r$}};
\draw[postaction={transform canvas={yshift=-1mm},draw}] [->] (1) -- (2) node [midway,above] { \small{$\sqcup f$}} node [midway,below] { \small{$\sqcup 1_{dom(f)}$}};
\end{tikzpicture}
\end{center}
where the edges of the cocone are given by $A\xrightarrow{j_A} \bigsqcup _{A\in \mathcal{I}_0} A \xrightarrow{r} colim\ d_\bullet $.
\end{theorem}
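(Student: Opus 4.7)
The plan is to identify the displayed coequalizer with $\mathrm{colim}\,d_\bullet$ by showing that both represent the same functor on $\mathcal{C}$, namely the one sending $X$ to the set of cocones over $d_\bullet$ with tip $X$; the identification then follows from the Yoneda lemma.

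First, by the universal property of the coproduct, morphisms $u : \bigsqcup_{A\in \mathcal{I}_0} A \to X$ correspond bijectively to families $(\tau_A : A \to X)_{A\in \mathcal{I}_0}$ via $\tau_A = u \circ j_A$ (using the paper's convention of writing $A$ for $d_\bullet(A)$). I would then check directly, summand by summand in $\bigsqcup_{f\in \mathcal{I}_1} \mathrm{dom}(f)$, that the two composites $u \circ (\sqcup f)$ and $u \circ (\sqcup 1_{\mathrm{dom}(f)})$ agree if and only if $\tau_B \circ f = \tau_A$ for every $f : A \to B$ in $\mathcal{I}_1$ — which is precisely the cocone condition on $(\tau_A)$.

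By the universal property of the coequalizer, morphisms from the coequalizer to $X$ are in bijection with those $u$ that coequalise the parallel pair, hence with cocones $(\tau_A)$ over $d_\bullet$; the bijection sends $v : \mathrm{colim}\,d_\bullet \to X$ to the family $\tau_A = v \circ r \circ j_A$. This is verbatim the universal property of the colimit, so the stated coequalizer realises $\mathrm{colim}\,d_\bullet$, with cocone edges $r \circ j_A$ as displayed.

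The only subtlety is bookkeeping: identity morphisms in $\mathcal{I}_1$ contribute tautological equalities and can be silently discarded, and composability of morphisms in $\mathcal{I}$ is automatically respected by the pointwise relations $\tau_B \circ f = \tau_A$, so no additional identifications on composites $g \circ f$ are required. Uniqueness of the induced factorisation is granted by the universal properties of the coproduct and coequalizer themselves, and cocompleteness of $\mathcal{C}$ is used only to guarantee that the coproducts and the coequalizer in question exist.
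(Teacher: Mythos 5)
Your proof is correct; the paper does not prove this statement at all — it merely recalls it from Mac Lane (Theorem V.2.1) — and your argument, identifying maps out of the coequalizer with cocones over $d_\bullet$ via the coproduct and coequalizer universal properties (the Yoneda phrasing is just packaging), is exactly the standard proof given there. The only point left implicit, and it is immediate, is that the edges $r\circ j_A$ themselves form a cocone over $d_\bullet$, which follows from $r$ coequalizing the parallel pair restricted to the summand indexed by each $f$.
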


As in the category $\mathbf{Set}$ we have a good understanding of coproducts (disjoint union) and coequalizers (factorisation by the equivalence relation generated by the pairs $(f(x),g(x))$), we can describe filtered colimits explicitly:

\begin{theorem}
Let $d_\bullet :\mathcal{I}\to \mathbf{Set}$ be a diagram, where $\mathcal{I}$ is a filtered category. Its colimit is the set $\faktor{\{(x,i):x\in d_i \}}{\sim}$, where $(x,i)\sim (y,j)$ iff $\exists \varphi _{ik}:i\to k $ $ \exists \varphi _{jk}:j\to k \ d(f)(x)=d(g)(y)$. The $i$-th coprojection is given by $d_i\ni x\mapsto [(x,i)]$.
\end{theorem}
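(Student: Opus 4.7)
The plan is to apply the previous theorem directly: the colimit is the set-theoretic coequalizer of $\bigsqcup_{f\in\mathcal{I}_1} d_{\mathrm{dom}(f)} \rightrightarrows \bigsqcup_{A\in\mathcal{I}_0} d_A$, which in $\mathbf{Set}$ is the disjoint union $\{(x,i):x\in d_i\}$ quotiented by the smallest equivalence relation $\equiv$ containing all pairs $((x,i),(d(f)(x),j))$ for $f:i\to j$ in $\mathcal{I}_1$ and $x\in d_i$. The goal is to show that $\equiv$ coincides with the relation $\sim$ defined in the statement. Once this identification is done, the description of the coprojection $x\mapsto [(x,i)]$ is immediate from the definition of the coequalizer maps.

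The inclusion $\equiv\,\subseteq\,\sim$ is straightforward: for any $f:i\to j$ and $x\in d_i$ the pair $((x,i),(d(f)(x),j))$ satisfies the defining condition of $\sim$ by taking $k=j$, $\varphi_{ik}=f$, $\varphi_{jk}=1_j$. Since $\sim$ will be shown to be an equivalence relation, minimality of $\equiv$ gives the containment. Conversely, if $(x,i)\sim(y,j)$ witnessed by $\varphi_{ik}:i\to k$ and $\varphi_{jk}:j\to k$ with $d(\varphi_{ik})(x)=d(\varphi_{jk})(y)$, then the chain
\[
(x,i)\,\equiv\,(d(\varphi_{ik})(x),k)\,=\,(d(\varphi_{jk})(y),k)\,\equiv\,(y,j)
\]
exhibits $(x,i)\equiv(y,j)$, so $\sim\,\subseteq\,\equiv$.

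It remains to verify that $\sim$ is an equivalence relation, and this is where filteredness is essential. Reflexivity (take $k=i$ and both morphisms to be $1_i$) and symmetry are trivial. For transitivity, suppose $(x,i)\sim(y,j)$ via $\varphi:i\to k$, $\psi:j\to k$ and $(y,j)\sim(z,l)$ via $\alpha:j\to m$, $\beta:l\to m$. By filteredness we first pick an object $n$ with morphisms $u:k\to n$, $v:m\to n$; then, since $u\psi$ and $v\alpha$ are parallel arrows $j\rightrightarrows n$, filteredness again provides $w:n\to n'$ with $w u\psi = w v\alpha$. A direct computation shows $d(wu\varphi)(x)=d(wu\psi)(y)=d(wv\alpha)(y)=d(wv\beta)(z)$, so $(x,i)\sim(z,l)$ via $n'$.

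The main obstacle is precisely this transitivity step: without filteredness one cannot in general close up the two zig-zag witnesses into a single common target, so the explicit formula would fail for a general diagram. Once transitivity is established, the rest of the argument is formal and the claim follows.
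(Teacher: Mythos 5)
Your proof is correct and follows the same route the paper intends: express the filtered colimit as the coequalizer of the coproduct diagram, note that in $\mathbf{Set}$ this is the quotient by the equivalence relation generated by the pairs $((x,i),(d(f)(x),j))$, and use the filteredness axioms (the cocone and parallel-arrow conditions, exactly as in your transitivity step) to show this generated relation is precisely $\sim$. The paper only records the key observation that filteredness forces $\sim$ to be an equivalence relation; you have simply written out the details it leaves implicit.
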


The proof consists of the simple observation that the axioms of filtered categories are precisely the ones that force $\sim $ to be an equivalence relation. (In what follows we will not distinguish between objects and arrows in $\mathcal{I}$ and their image at $d_\bullet $.) Now we claim that the situation for $\mathbf{Cat}$ is essentially the same, in other words:

\begin{proposition}
The forgetful functor $V:\mathbf{Cat}\to \mathbf{Set}$ (which assigns to every category the set of its arrows) preserves filtered colimits. The identity arrow on an object $A$ of $colim\ d_\bullet$ is represented by $(1_A,i)$, where $A$ is in the image of the $i$-th coprojection. Similarly, the composition of $(f,i):(A,i)\to (B,i)$ and $(g,j):(B,i)=(C,j)\to (D,j)$ is $(\varphi _{jk} (g)\circ \varphi _{ik}(f), k)$, where $\varphi _{i/j,k}$ is a map $\mathcal{C}_{i/j}\to \mathcal{C}_k$ in the diagram, such that $\varphi _{ik}(B) =\varphi _{jk}(C)$.
\label{catset}
\end{proposition}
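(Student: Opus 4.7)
The plan is to construct the colimit in $\mathbf{Cat}$ explicitly, reading off the preservation by $V$ and the formulas for identities and composition from the construction itself. Given a filtered $d_\bullet : \mathcal{I} \to \mathbf{Cat}$, I would form the set-level filtered colimits $X_1 = \mathrm{colim}_{\mathcal{I}} V(d_i)$ of arrow sets and $X_0 = \mathrm{colim}_{\mathcal{I}} \mathrm{Ob}(d_i)$ of object sets. The source and target assignments $s_i, t_i \colon V(d_i) \to \mathrm{Ob}(d_i)$ and the identity assignment $\iota_i \colon \mathrm{Ob}(d_i)\to V(d_i)$ are natural in $i$, because each transition functor preserves them; hence they induce maps $s,t\colon X_1 \to X_0$ and $\iota \colon X_0 \to X_1$ in $\mathbf{Set}$. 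In particular $\iota[A,i] = [1_A,i]$, which is the first claimed formula.

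The main technical step is composition. Given composable classes $[f,i]\colon [A,i]\to [B,i]$ and $[g,j]\colon [C,j]\to [D,j]$ with $[B,i]=[C,j]$ in $X_0$, the explicit description of filtered colimits of sets gives arrows $\varphi_{ik'}\colon i\to k'$ and $\varphi_{jk'}\colon j\to k'$ in $\mathcal{I}$ with $\varphi_{ik'}(B)=\varphi_{jk'}(C)$; then $\varphi_{jk'}(g)\circ \varphi_{ik'}(f)$ makes sense inside the category $d_{k'}$ and I define the composite to be $[\varphi_{jk'}(g)\circ \varphi_{ik'}(f),k']$. Well-definedness in the representatives $(f,i)$, $(g,j)$ and in the choice of $k'$ reduces, by another application of filteredness, to comparing two lifts to a common index above all the indices involved and using that the transition functors respect composition. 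This is the step I expect to be the main obstacle, but it is really a bookkeeping argument: every finite configuration of data in $\bigsqcup_i d_i$ can be pushed into a single $d_k$ by the axioms of a filtered category, so every identity to check is pulled back to an identity holding inside one category of the diagram.

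Once composition is defined, the unit and associativity laws are verified in the same style: choose a common index $k$ above the finitely many indices appearing in the representatives, transport everything into $d_k$, and invoke the category axioms there. This yields a category $\mathcal{D}$ with $V(\mathcal{D}) = X_1$ and $\mathrm{Ob}(\mathcal{D}) = X_0$, together with canonical functors $\lambda_i \colon d_i \to \mathcal{D}$ forming a cocone.

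Finally, I would verify the universal property. Given any cocone $\{f_i \colon d_i \to \mathcal{E}\}$ in $\mathbf{Cat}$, applying $V$ and $\mathrm{Ob}$ yields cocones in $\mathbf{Set}$, hence unique mediating maps $X_1 \to V(\mathcal{E})$ and $X_0 \to \mathrm{Ob}(\mathcal{E})$; these assemble into a functor $\mathcal{D}\to \mathcal{E}$ because compatibility with $s,t,\iota$ is automatic from the set-level construction, and compatibility with composition is checked by lifting any two composable classes to a common index $k$ and using that $f_k$ is a functor. Uniqueness is inherited from the set-level universal property. Since by construction $V(\mathcal{D}) = \mathrm{colim}_{\mathcal{I}}V(d_i)$, the functor $V$ preserves the filtered colimit, and the explicit formulas for identity and composition stated in the proposition are exactly the formulas used in the construction.
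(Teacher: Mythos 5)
Your proposal is correct and follows essentially the same route as the paper: equip the set-level filtered colimit with identities and composition defined by pushing representatives to a common index (well-defined by filteredness and functoriality of the transition maps), check the coprojections are functors, and deduce the universal property in $\mathbf{Cat}$ from the one in $\mathbf{Set}$ by verifying that the induced map is a functor on composable pairs lifted to a single stage. You merely spell out the bookkeeping (source/target/unit maps, associativity) that the paper leaves implicit.
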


\begin{proof}
The above construction of identities and composition is well-defined, since each function in the diagram is a functor (hence we can choose any suitable $k$). Each coprojection map is also a functor. Given a cocone $(\mu _i)_{i\in \mathcal{I}_0}$ with all edges being functors, the induced (unique) function must also be a functor: any composable pair exists at some stage $\mathcal{C}_i$, hence it is mapped to the composition by $\mu _i$, and also by the induced universal map by commutativity. The same holds for identities.
\end{proof}

The final step is to prove

\begin{theorem}
$\mathbf{Coh}$ has filtered colimits and the forgetful functor $U:\mathbf{Coh}\to \mathbf{Cat}$ preserves them.
\label{cohcat}
\end{theorem}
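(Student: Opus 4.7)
My plan is to form the colimit $\mathcal{C} = \operatorname{colim} d_\bullet$ in $\mathbf{Cat}$ using Proposition \ref{catset}, and then check in turn that $\mathcal{C}$ is coherent, that each coprojection $d_i \to \mathcal{C}$ is a coherent functor, and that for every cocone $(f_i : d_i \to \mathcal{E})$ in $\mathbf{Coh}$ the induced $\mathbf{Cat}$-functor $\mathcal{C} \to \mathcal{E}$ is coherent. The workhorse will be a lifting lemma: for any finite category $\mathcal{J}$ and any functor $D : \mathcal{J} \to \mathcal{C}$, there exists some $i \in \mathcal{I}$ and a lift $\tilde{D} : \mathcal{J} \to d_i$ whose composition with the $i$-th coprojection equals $D$. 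This follows from Proposition \ref{catset} and filteredness: the finitely many arrows of $D(\mathcal{J})$ are each represented at some stage, by filteredness they can be jointly represented at a common stage, and the finitely many equations expressing functoriality can then be witnessed at a further stage. The same argument shows that any finite equational property of such a diagram -- a given cone being a limit, a square being a pullback, a mono being an equalizer of its kernel pair, two arrows being equal, an arrow being an iso -- holds in $\mathcal{C}$ if and only if it is witnessed at some sufficiently late stage.

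With this tool the items of the definition of a coherent category are discharged uniformly. For a finite diagram $D : \mathcal{J} \to \mathcal{C}$, lift to $d_i$, take the limit there, and push forward along $d_i \to \mathcal{C}$; a competing cone lifts to some common later stage, where the universal property of the limit at that stage supplies (and by a further lift, uniquely determines) the mediating arrow. Images are produced the same way using the stable factorisations of Lemma \ref{coeq}, which are preserved by all transition maps. For the subobject lattice of $A \in \mathcal{C}$, every mono $m \hookrightarrow A$ lifts to a mono at some stage (being mono is the assertion that the diagonal into the kernel pair is an iso, and by the lifting lemma this holds at some stage), so finite joins, finite meets and pullback stability can all be computed at a common stage at which all the subobjects in question are honest monomorphisms; pullback stability of effective epis and the lattice-homomorphism property of $f^{-1}$ descend from the analogous properties in each $d_i$ in the same way.

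The coprojections $d_i \to \mathcal{C}$ preserve the coherent structure essentially by construction, since every finite limit, image or join in $\mathcal{C}$ is obtained by lifting a finite diagram to some $d_k$ (with a transition $d_i \to d_k$), performing the construction there, and pushing down -- and the transition is coherent. For the induced functor $F : \mathcal{C} \to \mathcal{E}$ from a coherent cocone $(f_i)$, the same strategy works: any piece of finite coherent-categorical data in $\mathcal{C}$ comes from some $d_i$, and $f_i$ being coherent then transports the relevant property to $\mathcal{E}$. The main obstacle I expect is bookkeeping rather than anything conceptual: one must verify that configurations mixing objects, arrows, subobjects and equations can always be lifted to a common late stage compatibly, and that every coherent-categorical property witnessed in $\mathcal{C}$ is already witnessed at some stage. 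Once this lifting technology is laid out cleanly, each verification collapses to applying the desired property inside some $d_i$ and invoking coherence of the transition functors.
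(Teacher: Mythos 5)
Your proposal is correct and follows essentially the same route as the paper: compute the filtered colimit in $\mathbf{Cat}$ via Proposition \ref{catset}, use the fact that every finite diagram and every finitely many equations factor through (and are witnessed at) some stage, and then verify each clause of coherence stagewise, with Lemma \ref{coeq} supplying the stable image factorisations. The only cosmetic difference is in handling subobjects: you re-witness monomorphisms as monos at a later stage (via the kernel-pair/iso criterion), whereas the paper stays at the given stage and replaces the representing arrows by the monic parts of their image factorisations — both are fine.
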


\begin{proof}
First we have to prove that if all $\mathcal{C}_i$'s and all $\varphi _{ij}$'s are coherent then so is the colimit, with coherent coprojections.
\begin{itemize}
    \item finite limits: Given a finite diagram $\Delta \xrightarrow{\delta _\bullet} colim\ d_\bullet $, it factors through some $\mathcal{C}_i$. One can take the limit cone here, whose image at $\mu _i$ will be the limit of $\delta _\bullet $. I.e.~given another cone with top object $[(y,j)]$ (assuming the whole diagram with the two cones factors through $\mathcal{C}_j$), we can find an index $k$ and maps $\varphi _{ik}$, $\varphi _{jk}$ such that the image of $\delta _\bullet $ at $\mathcal{C}_i$ and at $\mathcal{C}_j$ is identified by them. Since $\varphi _{ik}$ preserves finite limits, we have an induced map $!:(x,k)\to (y,k)$, whose image at $\mu _k$ makes everything commute. Uniqueness is proved similarly. It follows that $\mu _i$'s preserve finite limits.
    \item image factorisation: By the previous argument monomorphisms are preserved by the coprojection maps and the image of the kernel pair of $f$ at $\mu _i$ is the kernel pair of $\mu _i(f)$. Since kernel pairs have coequalizers in all $\mathcal{C}_i$'s, and they are preserved by $\varphi _{ij}$'s, the above argument shows that they are also preserved by the coprojections, hence effective epimorphisms are preserved. Given an arrow in the colimit one can take any of its preimages, and by the previous argument its factorisation will be mapped to an image factorisation in $colim\ d_\bullet$.
    \item unions: Take an object $[(x,i)]$ in $colim\ d_\bullet$ and two monomorphism $[(m_1,i)]$, $[(m_2,i)]$ into it (again: we can assume that these $i$'s are the same). In $\mathcal{C}_i$ we see the same diagram, but we can not guarantee that $m_1$ and $m_2$ are monos. Instead we can factor them into an effective epi followed by a mono, and by the uniqueness of these factorisations (up to unique isomorphism), we see that the monic part will go to the same subobject which was represented by $[(m_1,i)]$ (resp.~$[(m_2,i)]$). Now we can take the union in $\mathcal{C}_i$, and check that its image at $\mu _i$ is the union in the colimit.
    \item pullback-stability: Every effective epimorphism (resp.~pullback square) comes from an effective epi (resp.~pullback) in some $\mathcal{C}_i$, hence we are done. The same works for unions.
\end{itemize}
\end{proof}

Our next goal is to prove that for every coherent category $\mathcal{C}$, the hom-functor $\mathbf{Coh}(\mathcal{C},-)$ commutes with $|\mathcal{C}|^+$-filtered colimits. This is known for the category $\mathbf{Set}$:

\begin{proposition}
For any set $A$ the functor $\mathbf{Set}(A,-)$ preserves $|A|^+$-filtered colimits.
\end{proposition}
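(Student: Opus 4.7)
The plan is to use the explicit description of filtered colimits in $\mathbf{Set}$ recalled above and verify by hand that the canonical comparison map
$$\psi : colim _i\, \mathbf{Set}(A,d_i) \longrightarrow \mathbf{Set}(A,\, colim _i\, d_i)$$
is a bijection for any $|A|^+$-filtered diagram $d_\bullet :\mathcal{I}\to \mathbf{Set}$. Write $L = colim _i\, d_i$ with coprojections $\mu _i : d_i \to L$. Both surjectivity and injectivity reduce to a single observation: if $\mathcal{I}$ is $|A|^+$-filtered then any family of objects of $\mathcal{I}$ indexed by (a subset of) $A$ admits a common upper bound, since such a family has size $\leq |A| < |A|^+$.

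For surjectivity, take $f : A \to L$ and pick for each $a\in A$ a representative $f(a) = [(x_a, i_a)]$. Choose a common upper bound $k$ of the family $\{i_a\}_{a\in A}$ with structure maps $\varphi _{i_a,k}$. The function $\tilde f : A\to d_k$, $a\mapsto \varphi _{i_a,k}(x_a)$, then satisfies $\mu _k \circ \tilde f = f$, hence $\psi([\tilde f]) = f$.

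For injectivity, suppose $[f_1], [f_2] \in colim _i\, \mathbf{Set}(A,d_i)$ have the same image under $\psi$. After replacing both by their images at a common upper bound of the two representing indices, we may assume $f_1,f_2 : A\to d_j$ with $\mu _j \circ f_1 = \mu _j \circ f_2$. For each $a\in A$ the equality $[(f_1(a), j)] = [(f_2(a), j)]$ in $L$ yields some $k_a \geq j$ with $\varphi _{j,k_a}(f_1(a)) = \varphi _{j,k_a}(f_2(a))$; a common upper bound $k$ of the family $\{k_a\}_{a\in A}$ (again by $|A|^+$-filteredness) then gives $\varphi _{j,k}\circ f_1 = \varphi _{j,k}\circ f_2$, so $[f_1] = [f_2]$ already in the colimit of Hom-sets.

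No step is a serious obstacle; the whole argument is a routine unwrapping of the explicit colimit formula, and the bound $|A|^+$ appears precisely because the two cardinality arguments are each indexed over $A$. The one point worth paying attention to is that the injectivity step requires two separate applications of filteredness---first to put $f_1$ and $f_2$ on a common stage, then to equalize them pointwise---and these must be performed in that order.
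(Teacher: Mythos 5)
Your proof is correct and follows essentially the same route as the paper: unwind the explicit description of the filtered colimit, get surjectivity by bounding the family of representing indices $\{i_a\}_{a\in A}$, and get injectivity by first placing the two representatives on a common stage and then equalizing pointwise with a second application of $|A|^+$-filteredness. The paper's own proof is just a terser version of this same argument, so nothing further is needed.
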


\begin{proof}
The induced map $!: colim\ \mathbf{Set}(A,d_i) \to \mathbf{Set}(A,colim\ d_\bullet)$ has as domain the set of equivalence classes $[(f:A\to d_i)]$, where $f:A\to d_i$ is equivalent to $g:A\to d_j$ if there are maps $\varphi _{ik}$, $\varphi _{jk}$ in the image of $d_\bullet$, such that $\varphi _{ik}f=\varphi _{jk}g$. $!$ takes $[(f:A\to d_i)]$ to $A\xrightarrow{f} d_i \xrightarrow{\mu _i} colim\ d_\bullet$, and by the construction of an $|A|^+$-filtered colimit, it is automatically injective (i.e.~for each element $a$ there is a suitable $k$, and maps $\varphi _{ik}$, $\varphi _{jk}$, such that $\varphi _{ik}f(a)=\varphi _{jk}g(a)$, and these have a common upper-bound).

Given a function $f:A\to colim\ d_\bullet$, each $a\in A$ is included in the image of some $\mu _i$, by $|A|^+$-filteredness these $d_i$-s have a common extension $d_k$, hence $f$ factors through $A_k$. Therefore $!$ is also surjective.
\end{proof}

Now it follows easily for $\mathbf{Cat}$:

\begin{proposition}
For any category $\mathcal{C}$ the functor $\mathbf{Cat}(\mathcal{C},-)$ preserves $|\mathcal{C}|^+$-filtered colimits.
\end{proposition}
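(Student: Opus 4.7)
The plan is to mirror the $\mathbf{Set}$ argument, reducing to it via the forgetful functor $V:\mathbf{Cat}\to \mathbf{Set}$, which preserves filtered colimits by Proposition \ref{catset}. Fix a $|\mathcal{C}|^+$-filtered diagram $d_\bullet :\mathcal{I}\to \mathbf{Cat}$ with coprojections $\mu _i: d_i\to colim\ d_\bullet$, and consider the canonical comparison map
\[
\Phi : colim\ \mathbf{Cat}(\mathcal{C},d_i) \longrightarrow \mathbf{Cat}(\mathcal{C}, colim\ d_\bullet).
\]
I would prove $\Phi$ is a bijection by treating injectivity and surjectivity separately, each time invoking the previous proposition for $A=Arr(\mathcal{C})$ (which has cardinality $|\mathcal{C}|$ by our convention) together with the identification $V(colim\ d_\bullet) = colim\ V(d_\bullet)$ supplied by Proposition \ref{catset}.

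For injectivity, take $F:\mathcal{C}\to d_i$ and $G:\mathcal{C}\to d_j$ with $\mu _i F = \mu _j G$. Applying $V$ gives two functions on arrow sets whose images in $colim\ V(d_\bullet)$ agree, so the $\mathbf{Set}$ proposition supplies maps $\varphi _{ik}, \varphi _{jk}$ in $\mathcal{I}$ with $V(\varphi _{ik})V(F) = V(\varphi _{jk})V(G)$. Since functors are determined by their action on arrows, this already gives $\varphi _{ik}F = \varphi _{jk}G$ as functors, i.e.~$[F] = [G]$ in the colimit of hom-sets.

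For surjectivity, given $F:\mathcal{C}\to colim\ d_\bullet$, the $\mathbf{Set}$ proposition lifts $V(F)$ to a function $\tilde F: Arr(\mathcal{C})\to Arr(d_k)$ at some stage $k$ with $V(\mu _k)\tilde F = V(F)$. The subtle point is that $\tilde F$ need not yet be a functor: the identity equations $\tilde F(1_X)=1_{\tilde F(X)}$ and the composition equations $\tilde F(g\circ f)=\tilde F(g)\circ \tilde F(f)$ are only known to hold \emph{after} postcomposition with $\mu _k$. Each such equation is realized at some later stage of the diagram, and the total number of equations is bounded by $|\mathcal{C}|$ (using $|\mathcal{C}|^2=|\mathcal{C}|$ when $|\mathcal{C}|$ is infinite; finitely many otherwise). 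By $|\mathcal{C}|^+$-filteredness of $\mathcal{I}$ all of them can be amalgamated at a single stage $l$ via one map $\varphi _{kl}$, after which $\varphi _{kl}\tilde F: \mathcal{C}\to d_l$ is a genuine functor representing $F$.

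The main technical obstacle I anticipate is precisely this cardinal bookkeeping: one must ensure that the preimage choices and the functoriality equations are collectively indexed by a set of size at most $|\mathcal{C}|$, so that $|\mathcal{C}|^+$-filteredness of $\mathcal{I}$ suffices to consolidate them at a single stage. Once this is set up carefully, each individual step is a routine transcription of the corresponding argument for $\mathbf{Set}$.
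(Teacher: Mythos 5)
Your proposal is correct and follows essentially the same route as the paper: reduce to the $\mathbf{Set}$ case via $V$ (injectivity because functors are determined by their arrow functions, surjectivity by first factoring $F$ as a mere function through some stage), then repair the at most $|\mathcal{C}|$-many failures of functoriality (identities and compositions) at a later stage using $|\mathcal{C}|^+$-filteredness. The cardinal bookkeeping you flag is exactly the point the paper's proof also relies on, so no further changes are needed.
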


\begin{proof}
By Proposition \ref{catset} we can compute filtered colimits in $\mathbf{Set}$. If the map $!:colim\ \mathbf{Cat}(\mathcal{C},d_i) \to \mathbf{Cat}(\mathcal{C},colim\ d_\bullet)$ would take two elements to the same functor, then $!':colim\ \mathbf{Set}(V(\mathcal{C}),V(d_i)) \to \mathbf{Set}(V(\mathcal{C}),colim\ V\circ d_\bullet)$ would not be injective.

Given a functor $F:\mathcal{C}\to colim\ d_\bullet$, it factors through some $d_i$ as a(n arrow) function (e.g.~$F=\mathcal{C}\xrightarrow{F'}d_i \xrightarrow{\mu _i} colim\ d_\bullet$). Now we just count how many things can go wrong.

For each commutative triangle and each identity arrow it might happen that $F'$ does not preserve it, but in this case there is an index $k$ and an arrow $\varphi _{ik}$ in the diagram, such that $\varphi _{ik}F'$ corrects that mistake. Since there are at most $|\mathcal{C}|$-many commutative triangles and identities in $\mathcal{C}$ (or finitely many if $\mathcal{C}$ is finite), and our diagram is $|\mathcal{C}|^+$-filtered (or $\aleph _0$-filtered in the finite case), we are done.
\end{proof}

This shows how to proceed when the category $\mathbf{Coh}$ is considered:

\begin{proposition}
For any coherent category $\mathcal{C}$ the functor $\mathbf{Coh}(\mathcal{C},-)$ preserves $|\mathcal{C}|^+$-filtered colimits.
\label{filtered}
\end{proposition}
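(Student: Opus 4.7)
The strategy extends the same pattern one more step: by Theorem \ref{cohcat}, filtered colimits in $\mathbf{Coh}$ are already computed in $\mathbf{Cat}$ and each coprojection $\mu_i$ is coherent, so the task is to show that the canonical map
\[ ! : \mathrm{colim}_{i \in \mathcal{I}} \mathbf{Coh}(\mathcal{C}, d_i) \to \mathbf{Coh}(\mathcal{C}, \mathrm{colim}\ d_\bullet) \]
is bijective. Injectivity is essentially free: if two coherent representatives $F : \mathcal{C} \to d_i$ and $G : \mathcal{C} \to d_j$ satisfy $\mu_i F = \mu_j G$, they agree as plain functors, hence by the previous proposition they are already identified at some stage $d_k$ --- which is automatically an agreement of coherent functors, since coherence is a property.

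The work is in surjectivity. Given coherent $F : \mathcal{C} \to \mathrm{colim}\ d_\bullet$, the $\mathbf{Cat}$ case supplies an underlying functor $F' : \mathcal{C} \to d_i$ with $\mu_i F' = F$; in general $F'$ is not coherent, and each failure of coherence has to be corrected by moving to a later stage. For a chosen finite limit cone $\lambda$ over a diagram $L$ in $\mathcal{C}$, form the honest limit $P$ of $F'(L)$ in $d_i$ and the canonical comparison $\chi : F'(\mathrm{apex}) \to P$: since both $F$ and $\mu_i$ are coherent, $\mu_i(\chi)$ is invertible in the colimit, and by the standard reflection-of-isomorphisms fact for $\mathbf{Cat}$-filtered colimits (implicit in Proposition \ref{catset}) there is some stage $k_L$ at which $\varphi_{i,k_L}(\chi)$ becomes an iso, forcing $\varphi_{i,k_L} F'$ to preserve that particular finite limit. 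The same comparison-arrow trick handles effective epis (using the coequalizer-of-kernel-pair characterisation of Lemma \ref{coeq}) and finite joins of subobjects.

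Each coherence axiom involves at most $|\mathcal{C}|$-many instances in $\mathcal{C}$ (finite limit diagrams, arrows, finite families of subobjects), so fewer than $|\mathcal{C}|^+$ corrections are required in total; by $|\mathcal{C}|^+$-filteredness of $\mathcal{I}$, the family $\{k_L\}$ admits a common upper bound $k$ with compatible connecting maps. Because each transition $\varphi_{k_L,k}$ is itself coherent, it preserves the products, equalizers, effective epis and finite joins that were already secured at stage $k_L$, so $\varphi_{i,k} F' : \mathcal{C} \to d_k$ is a coherent functor mapped by $!$ to $F$. The main obstacle is the bookkeeping in this last step --- enumerating the short list of axioms, selecting an appropriate comparison arrow for each, and verifying that a single common stage $k$ really works simultaneously for all of them; the point that makes this go through is precisely that coherence of the transition maps prevents one correction from undoing another.
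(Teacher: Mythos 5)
Your proposal is correct and follows essentially the same route as the paper: injectivity is inherited from the $\mathbf{Cat}$/$\mathbf{Set}$ case, and surjectivity is obtained by factoring $F$ through some stage $d_i$ as a mere functor and then using $|\mathcal{C}|^+$-filteredness to correct the at most $|\mathcal{C}|$-many failures of coherence at a later stage, the coherence of the transition functors ensuring the corrections persist. Your comparison-arrow argument for why each individual failure is fixed at some stage is just a more explicit spelling-out of the step the paper states tersely.
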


\begin{proof}
Again, $!$ is injective and given a coherent functor $F:\mathcal{C}\to colim\ d_\bullet$, it factors through some $d_i$ as $\mathcal{C}\xrightarrow{F'}d_i \xrightarrow{\mu _i} colim\ d_\bullet$, where $F'$ is a (not necessarily coherent) functor. That is, for each finite diagram, effective epimorphism, and pair of subobjects it might happen, that $F'$ does not preserve the limit, the effective epi and the union, but some $\varphi _{ij}$ corrigates one of these mistakes. Since there are at most $ |\mathcal{C}|$-many (or finitely many) such diagrams in $\mathcal{C}$, we can find a $\varphi_{ik}$ for which $\varphi _{ik}F'$ is coherent.
\end{proof}

\begin{theorem}
$\mathbf{Coh}$ is $\aleph _1$-accessible.
\end{theorem}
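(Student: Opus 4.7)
The plan is to verify the two axioms of $\aleph_1$-accessibility: $\mathbf{Coh}$ has $\aleph_1$-filtered colimits, and there is a set $\mathcal{A}$ of $\aleph_1$-presentable objects such that every object of $\mathbf{Coh}$ is an $\aleph_1$-filtered colimit of objects from $\mathcal{A}$. The first axiom is immediate from Theorem \ref{cohcat}. For the set of generators I will take a choice of representatives for the isomorphism classes of countable coherent categories (i.e.~those with $|\mathcal{C}|\leq \aleph_0$); this is genuinely a set because, fixing an underlying countable arrow-set, there are only set-many coherent category structures on it. By Proposition \ref{filtered}, for every such $\mathcal{C}$ the functor $\mathbf{Coh}(\mathcal{C},-)$ preserves $|\mathcal{C}|^+\leq \aleph_1$-filtered colimits, so $\mathcal{C}$ is $\aleph_1$-presentable.

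To decompose an arbitrary coherent category $\mathcal{C}$, I consider the poset $P$ of its countable coherent subcategories, ordered by inclusion, where the edges of the diagram are the (coherent) inclusion functors. The key step is to show that $P$ is $\aleph_1$-filtered. Given countably many members $\{\mathcal{C}_n\}_{n<\omega}$, their union $\bigcup_n \mathcal{C}_n$ is a countable subcategory of $\mathcal{C}$, and by Proposition \ref{solution} applied inside $\mathcal{C}$ it sits inside some countable coherent subcategory $\tilde{\mathcal{C}}\in P$, which serves as an upper bound.

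Finally I have to identify the colimit of this diagram with $\mathcal{C}$ itself. Since each single arrow of $\mathcal{C}$ generates a finite subcategory, Proposition \ref{solution} places it inside some member of $P$; hence the set-theoretic union of the members of $P$ is all of $\mathcal{C}$, and $\mathcal{C}$ together with the family of inclusions forms a cocone on the diagram. By Theorem \ref{cohcat} the colimit in $\mathbf{Coh}$ of this $\aleph_1$-filtered diagram of inclusions is computed in $\mathbf{Cat}$, where it is plainly the union $\mathcal{C}$. This exhibits $\mathcal{C}$ as an $\aleph_1$-filtered colimit of members of $\mathcal{A}$ and concludes the proof.

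I do not expect any serious obstacle; the substantive work has already been done. The only mildly delicate point is the cardinality bookkeeping needed to confirm that the collection of isomorphism classes of countable coherent categories forms a set, and that the poset $P$ of countable coherent subcategories of a (possibly large) $\mathcal{C}$ is also small—both handled by the uniform bound $|\tilde{\mathcal{C}}|\leq\aleph_0$ provided by Proposition \ref{solution}.
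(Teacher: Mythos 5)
Your proof is correct and follows essentially the same route as the paper: countable coherent categories are $\aleph_1$-presentable by Proposition \ref{filtered}, and Proposition \ref{solution} makes the poset of countable coherent subcategories an $\aleph_1$-filtered diagram with colimit (computed in $\mathbf{Cat}$ via Theorem \ref{cohcat}) the category itself. The only cosmetic point is that the union of countably many countable coherent subcategories need not be closed under composition, so one should pass to the (still countable) subcategory it generates before invoking Proposition \ref{solution} -- exactly the $\aleph_0\cdot\aleph_0=\aleph_0$ bookkeeping the paper also leaves implicit.
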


\begin{proof}
Proposition \ref{filtered} shows that every coherent category $\mathcal{C}$ is $|\mathcal{C}|$-presentable, and Proposition \ref{solution} implies that every coherent category is the union of its countable subcategories. Clearly the poset of countable subcategories is $\aleph _1$-filtered since $\aleph _0 \cdot \aleph _0 =\aleph _0$.
\end{proof}

\section{2-categorical aspects}

If an accessible category has all limits then it is cocomplete (see Corollary 2.47. of \cite{rosicky}). Hence if $\mathbf{Coh}$ had pullbacks we could derive that $\mathbf{Coh}$ is complete and cocomplete. This is not the case since e.g. the two inclusions $* \hookrightarrow (*\leftrightarrow *)$ have empty intersection and the map from the 2-element Boolean algebra $\mathbf{2}$ to a coherent category is unique only up to unique natural isomorphism. However these examples show that we can hope for completeness and cocompleteness in a 2-categorical sense. In this section we will show that this idea is right.

\begin{definition}
Given a small 2-diagram $d_\bullet :\mathcal{I}\to \mathcal{C}$ where $\mathcal{C}$ is an arbitrary (2,1)-category, its \emph{homotopy} (or 2-) \emph{limit} is a cone

$ $\\
\adjustbox{scale=0.8,center}{
\begin{tikzcd}
	&&& d \\
	\\
	\\
	\\
	{d_i} &&&&&& {d_k} \\
	\\
	&&&& {d_j}
	\arrow["{f}"', from=5-1, to=7-5]
	\arrow["{g}"', from=7-5, to=5-7]
	\arrow["{h}"', from=5-1, to=5-7]
	\arrow["{p_i}"', from=1-4, to=5-1]
	\arrow["{p_j}", from=1-4, to=7-5]
	\arrow["{p_k}", from=1-4, to=5-7]
	\arrow["{\eta _f}"{description}, shift left=5, shorten <=34pt, shorten >=34pt, Rightarrow, from=5-1, to=7-5]
	\arrow["{\eta _g}"{description}, shift left=5, shorten <=17pt, shorten >=17pt, Rightarrow, from=7-5, to=5-7]
	\arrow["{\eta _h}"{description}, shift left=5, shorten <=53pt, shorten >=53pt, Rightarrow, from=5-1, to=5-7]
\end{tikzcd}
}
such that for each 2-cell $g\circ f \Rightarrow h$ in the diagram the above tetrahedron is filled by the identical 3-cell. Moreover it has the following universal property: given another such cone there is a map $r:e\to d$, unique up to unique natural isomorphism, together with 2-isomorphisms $\alpha _i : p_i r \Rightarrow q_i$

$ $\\
\adjustbox{scale=0.8,center}{
\begin{tikzcd}
	&&&&& {} \\
	&&& e \\
	\\
	&& {} && {} \\
	&& {} & d \\
	&&&&& {} \\
	&&&& {} &&&&& {} \\
	\\
	{d_i} &&&&&& {d_k} \\
	\\
	&&&& {d_j}
	\arrow["{f}"', from=9-1, to=11-5]
	\arrow["{g}"', from=11-5, to=9-7]
	\arrow["{h}"', from=9-1, to=9-7]
	\arrow["{p_i}"', from=5-4, to=9-1]
	\arrow["{p_j}", from=5-4, to=11-5]
	\arrow["{p_k}", from=5-4, to=9-7]
	\arrow["{\eta _{f}}"{description}, shift left=5, shorten <=34pt, shorten >=34pt, Rightarrow, from=9-1, to=11-5]
	\arrow["{q_i}"', curve={height=12pt}, from=2-4, to=9-1]
	\arrow["{q_k}", curve={height=-12pt}, from=2-4, to=9-7]
	\arrow[""{name=0, anchor=center, inner sep=0}, "{q_j}"{pos=0.2}, curve={height=-12pt}, from=2-4, to=11-5]
	\arrow["r"', dashed, from=2-4, to=5-4]
	\arrow["{\alpha _i}"{description}, Rightarrow, from=5-4, to=4-3]
	\arrow["{\alpha _j}"'{pos=0.1}, shorten >=22pt, Rightarrow, from=5-4, to=4-5]
	\arrow["{\nu _{f}}"{description}, shift right=5, shorten >=11pt, Rightarrow, from=5-3, to=0]
\end{tikzcd}
}
$ $\\
such that the composite of the 2-cells $\alpha _i: p_i r\Rightarrow q_i $, $\alpha _j : p_j r \Rightarrow q_j$ and $\eta _f: f p_i \Rightarrow p_j$ is $\nu _f : f q_i \Rightarrow q_j$ (for each arrow $f$ of the diagram).

\end{definition}

\begin{remark}
If $\mathcal{C}$ is a strict (2,1)-category then we can give a simpler description for certain homotopy limits. E.g.~a 2-pullback is a square 

\[\begin{tikzcd}
	{C^*} && C \\
	\\
	B && A
	\arrow["{g'}"', from=1-1, to=3-1]
	\arrow["f"', from=3-1, to=3-3]
	\arrow["{f'}", from=1-1, to=1-3]
	\arrow["g", from=1-3, to=3-3]
	\arrow["\eta"{description}, shorten <=11pt, shorten >=11pt, Rightarrow, from=3-1, to=1-3]
\end{tikzcd}\]
such that given an outer square with natural isomorphism $\nu :fh_1 \Rightarrow gh_2$ there is a map $r: D\to C^*$ (unique up to unique natural isomorphism) and isomorphisms $\alpha _1: g'r\Rightarrow h_1$, $\alpha _2: f'r\to h_2$
\[\begin{tikzcd}
	D \\
	\\
	&& {C^*} && C \\
	\\
	&& B && A
	\arrow["{g'}"', from=3-3, to=5-3]
	\arrow["f"', from=5-3, to=5-5]
	\arrow["{f'}", from=3-3, to=3-5]
	\arrow["g", from=3-5, to=5-5]
	\arrow["\eta"{description}, shorten <=11pt, shorten >=11pt, Rightarrow, from=5-3, to=3-5]
	\arrow[""{name=0, anchor=center, inner sep=0}, "{h_2}", curve={height=-12pt}, from=1-1, to=3-5]
	\arrow[""{name=1, anchor=center, inner sep=0}, "{h_1}"', curve={height=12pt}, from=1-1, to=5-3]
	\arrow["r", dashed, from=1-1, to=3-3]
	\arrow["{\alpha _1}"{description}, shorten >=5pt, Rightarrow, from=3-3, to=1]
	\arrow["{\alpha _2}"{description}, shorten >=5pt, Rightarrow, from=3-3, to=0]
\end{tikzcd}\]
such that the composition of the 2-cells $\alpha _1^{-1}$, $\alpha _2$ and $\eta $ is $\nu$.

\end{remark}

\begin{remark}
Our notion of a 2-(co)limit coincides with that of \cite{lurietopos} when $\mathcal{C}$ is regarded as an $(\infty ,1)$-category where the lifting in the diagram
\[\begin{tikzcd}
	{\Lambda _i^n} && {\mathcal{C}} \\
	\\
	{\Delta ^n}
	\arrow[hook, from=1-1, to=3-1]
	\arrow[dashed, from=3-1, to=1-3]
	\arrow[from=1-1, to=1-3]
\end{tikzcd}\]
is unique for $n\geq 3$ and $0<i<n$.
\end{remark}

\begin{theorem}
$\mathbf{Coh_{\sim}}$ has all homotopy products and pullbacks.
\end{theorem}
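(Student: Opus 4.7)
The plan is to handle products and pullbacks by separate means, exploiting what is already established. For homotopy products, the strict 1-categorical product $\prod_i \mathcal{C}_i$ from Corollary \ref{finprod} and the remark following it already suffices: since the indexing category of a product is discrete, a 2-cone is merely a family of coherent functors $q_i : D \to \mathcal{C}_i$, and the 1-universal property produces a unique $r : D \to \prod_i \mathcal{C}_i$ with $p_i r = q_i$ strictly, so one can take each $\alpha_i$ to be the identity; the unique-up-to-unique-iso condition on $r$ then reduces to coordinatewise uniqueness.

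For homotopy pullbacks of a cospan $\mathcal{C} \xrightarrow{F} \mathcal{D} \xleftarrow{G} \mathcal{C}'$, I would replace $F$ by an isofibration and invoke Proposition \ref{cohpb}. Define the iso-mapping-path category $\tilde{\mathcal{C}}$ whose objects are triples $(C, D, \alpha)$ with $\alpha : FC \xrightarrow{\sim} D$ an isomorphism in $\mathcal{D}$, and whose morphisms $(C, D, \alpha) \to (C', D', \alpha')$ are pairs $(f, g)$ with $g \alpha = \alpha' F(f)$. The projection $\pi : \tilde{\mathcal{C}} \to \mathcal{D}$, $(C, D, \alpha) \mapsto D$, is manifestly an isofibration (lift an iso $g : D \to D'$ to $(1_C, g) : (C, D, \alpha) \to (C, D', g\alpha)$), and the assignment $C \mapsto (C, FC, 1_{FC})$ exhibits an equivalence $\mathcal{C} \simeq \tilde{\mathcal{C}}$ over $\mathcal{D}$ inside $\mathbf{Coh_\sim}$.

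The main technical obstacle is verifying that $\tilde{\mathcal{C}}$ is itself coherent and $\pi$ coherent. All the structure is computed pointwise on the $\mathcal{C}$-component, with the iso $\alpha$ propagated using the preservation properties of $F$: for finite limits, $\lim(C_j, D_j, \alpha_j) = (\lim C_j, \lim D_j, \alpha)$ where $\alpha$ is the composite $F(\lim C_j) \cong \lim F(C_j) \xrightarrow{\lim \alpha_j} \lim D_j$; for image factorizations, uniqueness-up-to-iso of image factorizations in $\mathcal{D}$ supplies the middle iso connecting the factorization in $\mathcal{C}$ (pushed forward via $F$) to the would-be image. Joins of subobjects and pullback-stability follow analogously. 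These checks are routine but voluminous, and form the bulk of the work.

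Finally, Proposition \ref{cohpb} applied to the cospan $\tilde{\mathcal{C}} \xrightarrow{\pi} \mathcal{D} \xleftarrow{G} \mathcal{C}'$ produces a strict pullback $P = \tilde{\mathcal{C}} \times_{\mathcal{D}} \mathcal{C}'$ in $\mathbf{Coh}$, whose objects are triples $(C, C', \alpha : FC \xrightarrow{\sim} GC')$ and whose coherent projections come equipped with a natural isomorphism $\eta : F p_1 \Rightarrow G p_2$ of component $\alpha$ at each object. To verify the homotopy-pullback universal property, for any cone $(D, q_1, q_2, \nu : F q_1 \Rightarrow G q_2)$ define $r : D \to P$ by $d \mapsto (q_1(d), q_2(d), \nu_d)$; this is a functor precisely because $\nu$ is natural, and the required $\alpha_1, \alpha_2$ may be taken to be identities, so the compatibility of $\alpha_1, \alpha_2, \eta$ with $\nu$ is automatic. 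Uniqueness of $r$ up to unique natural iso follows from the componentwise description of $P$ and the corresponding uniqueness in the strict pullback.
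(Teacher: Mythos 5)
Your argument is correct, and the product half is the same as the paper's. For pullbacks you follow the same skeleton as the paper --- replace one leg of the cospan by an isofibration, take the strict pullback supplied by Proposition \ref{cohpb}, and verify the 2-universal property directly --- but the implementation is genuinely different. The paper factors a leg abstractly, via the Joyal model structure, into an equivalence followed by an isofibration, and consequently has to drag the quasi-inverse, the unit/counit and a triangle identity through the whole verification (lifting the components of $\nu$ through the isofibration, constructing $h$, and a fairly long uniqueness argument). You instead strictify $F$ by the explicit iso-mapping-path category $\tilde{\mathcal{C}}$, so the resulting strict pullback is literally the iso-comma category with objects $(C,C',\alpha\colon FC\xrightarrow{\sim}GC')$; the mediating functor is $d\mapsto (q_1(d),q_2(d),\nu_d)$ and both the compatibility with $\eta$ and the uniqueness up to unique compatible isomorphism become transparent componentwise computations. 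What your route buys is this concreteness; what it costs is the coherence check for $\tilde{\mathcal{C}}$ and $\pi$, which you flag as the bulk of the work. You can get that almost for free: coherence of categories is invariant under equivalence and coherence of functors under natural isomorphism, and $\pi\cong F\circ e^*$ where $e^*(C,D,\alpha)=C$ is a quasi-inverse of your equivalence $e\colon C\mapsto (C,FC,1_{FC})$; this is essentially how the paper disposes of the analogous step for its factorization.

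One point you should add explicitly: the universal property has to be verified in $\mathbf{Coh_{\sim}}$, so the mediating functor $r$ must be shown to be a coherent functor, not merely a functor. This is quick with what is already available: writing $\pi_1,\pi_2$ for the strict projections of $P=\tilde{\mathcal{C}}\times_{\mathcal{D}}\mathcal{C}'$, one has $\pi_2 r=q_2$ coherent, and $\pi_1 r$ is naturally isomorphic to $e\circ q_1$ (via the morphisms $(1_{q_1(d)},\nu_d)$), hence coherent; since by Lemma \ref{pblim} and the proof of Proposition \ref{cohpb} the pair $(\pi_1,\pi_2)$ jointly reflects finite limits, effective epimorphisms and joins of subobjects, $r$ itself is coherent. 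With that sentence added, your proof is complete.
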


\begin{proof}
First observe, that the (1-categorical) product is a homotopy limit. I.e.~as there are no arrows in the diagram, the notion of a(n ordinary) cone coincides with the one in the 2-categorical sense, hence it suffices to prove that given two maps $f,g:\mathcal{D}\to \prod \mathcal{C}_i$, whose projections $p_i\circ f$, $p_i \circ g$ are naturally isomorphic (shown by $\eta _i$), the original maps $f$, $g$ are also isomorphic (just take $\eta _i$ in the $i^{\text{th}}$ coordinate) and the isomorphism is uniquely determined by the $\eta _i$-s.

Now we show the existence of all homotopy pullbacks. By the existence of the Joyal model structure on $\mathbf{Cat}$ we can factor any functor $f:\mathcal{C}\to \mathcal{D}$ as $\mathcal{C}\xrightarrow{j}\mathcal{C}'\xrightarrow{f'}\mathcal{D}$ where $j$ is an equivalence (and it is injective on objects), and $f'$ is an isofibration (see e.g.~Theorem 6.2. in \cite{joyal}). Obviously each equivalence with coherent domain is a coherent functor with coherent codomain, and if $f$, $\mathcal{C}$, $\mathcal{D}$ are coherent, then so are $\mathcal{C}'$, $j$ and $f'$. As the (1-)pullback along an isofibration exists (see \ref{cohpb}), we have a natural candidate for a homotopy pullback.

Assume that we are given the coherent functors $f:\mathcal{B}\to \mathcal{A}$, $g^*:\mathcal{C}\to \mathcal{A}$, we would like to form their 2-pullback. Factor $g^*$ as $\mathcal{C}\xrightarrow{\varphi } \mathcal{C}'\xrightarrow{g} \mathcal{A}$, where $\varphi $ is an equivalence and $g$ is an isofibration. Now form the 1-pullback of $f$ and $g$ in $\mathbf{Coh}$, then precompose $f'$ to the quasi-inverse $\varphi ^*$ of $\varphi$ to get the edge of a cone, whose codomain is $\mathcal{C}$. We claim that this construction results a 2-pullback in the 2-category $\mathbf{Coh_{\sim}}$.

\begin{center}
\begin{tikzcd}
	D &&&&&&& C \\
	\\
	&&&& {C^*} &&& {C'} \\
	\\
	\\
	&&&& B &&& A
	\arrow["{f'}"', from=3-5, to=3-8]
	\arrow["{g'}"', from=3-5, to=6-5]
	\arrow["f"', from=6-5, to=6-8]
	\arrow["g", from=3-8, to=6-8]
	\arrow["\varphi", from=1-8, to=3-8]
	\arrow["{f^*}"'{pos=0.6}, from=3-5, to=1-8]
	\arrow["{\varphi ^*}", curve={height=-12pt}, dashed, from=3-8, to=1-8]
	\arrow["{h_1}"', curve={height=12pt}, from=1-1, to=6-5]
	\arrow["{h_2}", from=1-1, to=1-8]
	\arrow["r", dashed, from=1-1, to=3-5]
	\arrow["h", curve={height=-12pt}, from=1-1, to=3-8]
	\arrow[shorten <=37pt, shorten >=37pt, Rightarrow, no head, from=6-5, to=3-8]
	\arrow["\alpha", shift left=5, shorten <=16pt, shorten >=25pt, Rightarrow, from=3-5, to=1-8]
\end{tikzcd}
\end{center}

First, it is clear that $g\varphi f^* \cong fg'$ and the components of this isomorphism ($\eta $) are given by: $$\eta _x: fg'(x)=gf'(x)\xrightarrow{g(\chi f')}g\varphi \varphi ^* f' (x)= g\varphi f^*(x)$$ where $\chi : 1_{C'}\to \varphi \varphi ^*$ is the unit of the (adjoint) equivalence $\varphi$.

Assume that we are given the maps $h_1: D\to B$, $h_2:D\to C$ and an isomorphism $\nu : fh_1\Rightarrow g\varphi h_2$. The isomorphisms $\nu _d:fh_1(d)\to g\varphi h_2$ can be lifted to $C'$ (as $g$ is an isofibration), hence there are isomorphisms $\mu _d: c'_d\to \varphi h_2 (d)$ with $g(\mu _d)=\nu _d$. We define a functor $h:D\to C'$: it takes an object $d$ to $c'_d$ and an arrow $i:d\to d'$ to $\mu _{d'}^{-1} \circ \varphi h_2(i) \circ \mu _d$.

\[\begin{tikzcd}
	{c'_d} && {\varphi h_2(d)} \\
	\\
	{c'_{d'}} && {\varphi h_2(d')}
	\arrow["{\mu_d}", from=1-1, to=1-3]
	\arrow[from=1-1, to=3-1]
	\arrow["{\varphi h_2(i)}", from=1-3, to=3-3]
	\arrow["{\mu _{d'}}"', from=3-1, to=3-3]
\end{tikzcd}\]

It is clear that $\mu $ is a natural isomorphism from $h$ to $\varphi h_2$. It also follows that $gh=fg'$ (just take the image of the above square at $g$). By the universal property of the pullback we have an arrow $r:D\to C^*$ with $g'r=h_1$ and $f'r=h$.

We need to find a natural isomorphism $\alpha : f^*r\Rightarrow h_2$ such that the composite $$fh_1=fg'r\xRightarrow{\eta r} g\varphi f^* r \xRightarrow{g\varphi (\alpha )} g\varphi h_2$$ gives $\nu$. We will take $$\alpha :=f^*r = \varphi ^*f'r =\varphi ^* h \xRightarrow{\varphi ^*(\mu )}\varphi ^* \varphi h_2 \xRightarrow{\chi 'h_2}h_2$$ (where $\chi ': \varphi ^*\varphi \to 1_C$ is the counit) and then check the above property. The solution is shipped by

\[\begin{tikzcd}
	{f'r(d)} && {\varphi \varphi ^*f'r(d)} &&& {\varphi \varphi ^*\varphi h_2(d)} && {\varphi h_2(d)} \\
	\\
	&& {f'r(d)} &&& {\varphi h_2(d)} \\
	&& {}
	\arrow["{\chi ^{-1}_{f'r(d)}}", from=1-3, to=3-3]
	\arrow["{\varphi \varphi ^*(\mu _d)}", from=1-3, to=1-6]
	\arrow["{\mu _d}"', from=3-3, to=3-6]
	\arrow["{\chi ^{-1}_{\varphi h_2(d)}}"', from=1-6, to=3-6]
	\arrow["{\chi _{f'r(d)}}", from=1-1, to=1-3]
	\arrow[Rightarrow, no head, from=1-1, to=3-3]
	\arrow["{\varphi (\chi '_{h_2(d)})}", from=1-6, to=1-8]
	\arrow[Rightarrow, no head, from=3-6, to=1-8]
\end{tikzcd}\]
where the right triangle commutes as $\chi $ and $\chi '$ satisfy the triangle identities.

It remains to prove that $r$ is unique up to unique compatible natural isomorphism. Assume that we are given
\[\begin{tikzcd}
	D \\
	&& {C^*} && C \\
	&&&& {C'} \\
	&& B && A
	\arrow["{g'}", from=2-3, to=4-3]
	\arrow["f"', from=4-3, to=4-5]
	\arrow["{f^*}"', from=2-3, to=2-5]
	\arrow["{r_2}", shift left=2, from=1-1, to=2-3]
	\arrow["{r_1}"', shift right=2, from=1-1, to=2-3]
	\arrow[""{name=0, anchor=center, inner sep=0}, "{h_2}", curve={height=-18pt}, from=1-1, to=2-5]
	\arrow[""{name=1, anchor=center, inner sep=0}, "{h_1}"', curve={height=12pt}, from=1-1, to=4-3]
	\arrow["g", from=3-5, to=4-5]
	\arrow["\varphi", from=2-5, to=3-5]
	\arrow["{\eta }", shorten <=11pt, shorten >=11pt, Rightarrow, from=4-3, to=2-5]
	\arrow["{\alpha '_1,\alpha '_2}"', shorten >=3pt, Rightarrow, from=2-3, to=0]
	\arrow["{\beta _1, \beta _2}", shorten >=7pt, Rightarrow, from=2-3, to=1]
\end{tikzcd}\]
such that the composition of the 2-cells $\alpha '_i$, $\beta _i^{-1}$ and $\eta $ is $\nu$. We first show that this implies the case
\[\begin{tikzcd}
	&&&& C \\
	D \\
	&& {C^*} && {C'} \\
	\\
	&& B && A
	\arrow["{f'}"', from=3-3, to=3-5]
	\arrow["g", from=3-5, to=5-5]
	\arrow["{g'}", from=3-3, to=5-3]
	\arrow["f"', from=5-3, to=5-5]
	\arrow["{r_1,r_2}"{description, pos=0.4}, from=2-1, to=3-3]
	\arrow[""{name=0, anchor=center, inner sep=0}, "{h_1}"', curve={height=6pt}, from=2-1, to=5-3]
	\arrow[""{name=1, anchor=center, inner sep=0}, "h", curve={height=-12pt}, from=2-1, to=3-5]
	\arrow["{f^*}"'{pos=0.7}, from=3-3, to=1-5]
	\arrow["{\varphi }", from=1-5, to=3-5]
	\arrow["{h_2}", from=2-1, to=1-5]
	\arrow[shorten <=22pt, shorten >=22pt, Rightarrow, no head, from=5-3, to=3-5]
	\arrow["{\varphi ^*}"', curve={height=18pt}, dashed, from=3-5, to=1-5]
	\arrow["{\beta _1,\beta _2}"', shorten >=6pt, Rightarrow, from=3-3, to=0]
	\arrow["{\alpha _1,\alpha _2}", shorten >=3pt, Rightarrow, from=3-3, to=1]
\end{tikzcd}\]
with $f(\beta _i)=g(\alpha _i)$.

We take $$\alpha _i:= f'r_i\xRightarrow{\chi _{f'r_i}} \varphi \varphi ^*f'r_i \xRightarrow{\varphi (\alpha _i')} \varphi h_2 \xRightarrow{\mu ^{-1}} h$$. The fact that $\eta$, $\beta _i^{-1}$ and $\alpha _i$ glue together and form $\nu$ can be expressed by the commutative square:

\[\begin{tikzcd}
	{fg'r_i(d)} && {g\varphi f^*r_i(d)} \\
	\\
	{fh_1(d)} && {g\varphi h_2(d)}
	\arrow["{\eta _{r_i(d)}}", from=1-1, to=1-3]
	\arrow["{g\varphi ((\alpha _i')_d)}", from=1-3, to=3-3]
	\arrow["{f((\beta_i)_d)}"', from=1-1, to=3-1]
	\arrow["{\nu _d}"', from=3-1, to=3-3]
\end{tikzcd}\]
hence $f((\beta _i)_d)$ is $g(\mu _d^{-1})\circ g\varphi ((\alpha '_i)_d) \circ g(\chi _{f'r(d)})=g((\alpha _i)_d)$.

Now if we take $$\alpha :=\alpha _2^{-1}\alpha _1: f'r_1 \Rightarrow f'r_2$$ and $$\beta := \beta _2^{-1}\beta _1: g'r_1\Rightarrow g'r_2$$, then we have $f(\beta )=g(\alpha )$.

Recall that the (coherent) category $C^*$ can be explicitly described as the one with objects $\{(b,c'): b\in Ob(B), c'\in Ob(C'), f(b)=g(c')\}$ and similarly for arrows. By the above property $(\beta , \alpha )$ is a well-defined natural isomorphism $r_1=(g'r_1,f'r_1)\Rightarrow (g'r_2,f'r_2)=r_2$ and it is uniquely determined by the 2-cells $\alpha _i$, $\beta _i$, $\eta $ and $\nu $. But the latter is equivalent to the datum of $\alpha '_i$, $\beta _i$, $\eta $ and $\nu $. 

\end{proof}

Recall the following Proposition (4.4.2.6.) together with its dual from \cite{lurietopos}.

\begin{theorem}[Lurie]
If $\mathcal{C}$ is an $(\infty , 1)$-category and it has (homotopy) pushouts and $\kappa$-small (homotopy) coproducts then $\mathcal{C}$ has all $\kappa$-small (homotopy) colimits.
\end{theorem}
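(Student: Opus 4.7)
The plan is to emulate the classical $1$-categorical argument that coequalizers together with small coproducts generate all small colimits, upgrading each ingredient to its $(\infty,1)$-categorical analogue. The argument would proceed via two successive reductions, followed (in the full $\infty$-setting) by a skeletal induction on the nerve of the indexing category.

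First I would show that a (homotopy) coequalizer of a pair $u,v\colon Y \rightrightarrows X$ is computed as the (homotopy) pushout of the span
\[
X \xleftarrow{[u,v]} Y\sqcup Y \xrightarrow{\nabla_Y} Y,
\]
where $[u,v]$ is the copairing and $\nabla_Y$ is the codiagonal: a cocone under this span is precisely the datum of a map $\alpha\colon X\to Z$ together with a coherent isomorphism $\alpha u \simeq \alpha v$, which is exactly what a homotopy coequalizer classifies. So pushouts and binary coproducts already suffice to build coequalizers. Then I would invoke the classical colimit formula
\[
\operatorname{colim}\, d \;=\; \operatorname{coeq}\!\left(\textstyle\bigsqcup_{f\in\mathcal{I}_1} d(\operatorname{dom} f) \rightrightarrows \bigsqcup_{A\in\mathcal{I}_0} d(A)\right),
\]
with the two parallel arrows given on the $f$-summand by $\iota_{\operatorname{dom} f}$ and by $\iota_{\operatorname{cod} f}\circ d(f)$; this recovers the colimit of any $\kappa$-small diagram $d\colon\mathcal{I}\to\mathcal{C}$ from $\kappa$-small coproducts plus one coequalizer, and in combination with the first step uses only pushouts and $\kappa$-small coproducts.

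The formula above, however, captures only the $0$- and $1$-cells of $\mathcal{I}$. To handle higher data I would pass to the nerve $N(\mathcal{I})$ and argue by induction on its skeletal filtration: $\operatorname{sk}_n N(\mathcal{I})$ is obtained from $\operatorname{sk}_{n-1} N(\mathcal{I})$ by a pushout along a coproduct of boundary inclusions $\partial\Delta^n\hookrightarrow\Delta^n$ indexed by non-degenerate $n$-simplices, and each such attachment translates into a further pushout-and-coproduct operation on colimits. The hard part will be verifying that this attaching step genuinely implements the $(\infty,1)$-categorical colimit, i.e.~that pushing out along $\partial\Delta^n\hookrightarrow\Delta^n$ really corresponds to taking the expected homotopy pushout on the level of colimit objects; in full generality this is the delicate simplicial argument Lurie carries out in the quasi-category model. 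In the $(2,1)$-setting relevant to the present paper only $0$-, $1$- and invertible $2$-cells matter, so the induction collapses to at most two coequalizer-like stages on top of the coproduct indexed by $\mathcal{I}_0$, and $\kappa$-smallness is automatically preserved because $\kappa$ bounds the number of cells of $\mathcal{I}$ in each dimension.
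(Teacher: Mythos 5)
The first thing to note is that the paper does not prove this statement at all: it is recalled verbatim from Lurie (\cite{lurietopos}, Proposition 4.4.2.6 and its dual), so the only argument to compare yours with is Lurie's own — which is precisely the skeletal induction you sketch in your final paragraph. Within your proposal, the opening reduction is fine: the homotopy coequalizer of $u,v\colon Y\rightrightarrows X$ really is the pushout of $X\xleftarrow{[u,v]}Y\sqcup Y\xrightarrow{\nabla_Y}Y$, since the indexing category of a parallel pair has a $1$-dimensional nerve. But the second step, the Mac Lane formula $\operatorname{colim} d=\operatorname{coeq}\bigl(\coprod_{f}d(\operatorname{dom}f)\rightrightarrows\coprod_{A}d(A)\bigr)$, is simply false for homotopy colimits — it computes the colimit over the $1$-skeleton of $N(\mathcal{I})$ only (for $\mathcal{I}=BG$ and the constant one-point diagram in spaces it returns a wedge of circles rather than $BG$) — and you concede as much, so these two steps are a detour that your own argument then abandons.

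That leaves the skeletal induction as the entire content of the proof, and there the step you yourself label ``the hard part'' — that attaching the nondegenerate $n$-simplices along $\coprod\partial\Delta^n\hookrightarrow\coprod\Delta^n$ yields a corresponding pushout decomposition of colimit objects, i.e.\ that for $K=K'\coprod_{(\coprod\partial\Delta^n)}(\coprod\Delta^n)$ one has $\operatorname{colim}_K$ as a pushout of $\operatorname{colim}_{K'}$ and the coproduct of colimits over the $\Delta^n$'s (trivial, by cofinality of the final vertex) along the coproduct of colimits over the $\partial\Delta^n$'s (handled by induction on dimension) — is exactly Lurie's Proposition 4.4.2.2, resting on the decomposition results of HTT \S 4.2.3. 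You cite this as something Lurie does rather than proving it, so as a blind proof the key lemma is named but missing. The closing claim that in the $(2,1)$-setting the induction ``collapses to at most two coequalizer-like stages'' is plausible — it amounts to the standard fact that coproducts, coisoinserters and coequifiers generate all bicolimits — but it too is asserted rather than argued, and would still require constructing those $2$-colimits from $2$-pushouts and coproducts and verifying the $2$-categorical analogue of the decomposition lemma. In short: the strategy is the right one (it is Lurie's), but the genuinely hard step is deferred to the reference rather than established.
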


\begin{corollary}
$\mathbf{Coh_{\sim}}$ has all homotopy limits.
\end{corollary}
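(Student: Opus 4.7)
The plan is to deduce this as a direct application of the dual of the cited Lurie theorem. By the preceding theorem, $\mathbf{Coh_{\sim}}$ admits all homotopy pullbacks, and it admits homotopy products of arbitrary (small) size: the product case of the previous theorem was proved by the observation that the ordinary product is already a homotopy product, and by the remark in Section~2 (after Corollary \ref{finprod}) arbitrary products of coherent categories exist in $\mathbf{Coh}$, being constructed coordinate-wise.

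Thus for every small cardinal $\kappa$, the (2,1)-category $\mathbf{Coh_{\sim}}$ has homotopy pullbacks and $\kappa$-small homotopy products. Applying the dual of the Lurie theorem quoted above (which says that an $(\infty,1)$-category with homotopy pullbacks and $\kappa$-small homotopy products has all $\kappa$-small homotopy limits) yields all $\kappa$-small homotopy limits in $\mathbf{Coh_{\sim}}$. Letting $\kappa$ range over all small cardinals gives the result.

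The only point requiring a brief justification is the compatibility of our definition of a homotopy limit with the $(\infty,1)$-categorical one used in \cite{lurietopos}; this is precisely the content of the remark preceding the previous theorem, which explains that the two notions agree when $\mathbf{Coh_{\sim}}$ is regarded as an $(\infty,1)$-category. Hence there is no genuine obstacle: the work has already been done in proving pullbacks and products, and this corollary is a straightforward invocation of the general principle.
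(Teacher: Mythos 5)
Your proposal is correct and matches the paper's intended argument: the corollary is meant to follow immediately from the preceding theorem (homotopy pullbacks and arbitrary homotopy products in $\mathbf{Coh_{\sim}}$) together with the dual of the quoted Lurie result, with the identification of the $(2,1)$-categorical and $(\infty,1)$-categorical notions of homotopy limit supplied by the earlier remark. Nothing further is needed.
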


Now we would like to prove the existence of homotopy colimits. First we need the basic fact that equalizers are monic:

\begin{proposition}
Given a diagram formed by a set of paralel arrows and some natural isomorphisms between them, its homotopy limit (equalizer)
\[\begin{tikzcd}
	&& v \\
	\\
	w \\
	&&& {w'}
	\arrow[""{name=0, anchor=center, inner sep=0}, "{f_i}"', shift right=3, from=3-1, to=4-4]
	\arrow[""{name=1, anchor=center, inner sep=0}, "{f_j}", shift left=2, shorten <=8pt, shorten >=8pt, from=3-1, to=4-4]
	\arrow[""{name=2, anchor=center, inner sep=0}, "e"', from=1-3, to=3-1]
	\arrow[""{name=3, anchor=center, inner sep=0}, "{e'}", from=1-3, to=4-4]
	\arrow[shorten <=1pt, shorten >=1pt, Rightarrow, from=0, to=1]
	\arrow["{\eta _i}"{description}, curve={height=6pt}, shorten <=10pt, shorten >=10pt, Rightarrow, from=2, to=3]
	\arrow["{\eta _j}"{description}, curve={height=-6pt}, shorten <=10pt, shorten >=10pt, Rightarrow, from=2, to=3]
\end{tikzcd}\]
has the property, that for any natural isomorphism $\alpha : eg\Rightarrow eh$ there is a unique natural isomorphism $\gamma : g\Rightarrow h$ such that $e\gamma =\alpha $.
\label{eqmono}
\end{proposition}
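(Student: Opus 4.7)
The plan is to deduce the claim from the 2-universal property of the homotopy equalizer $v$. The key observation is that the given 2-cell $\alpha$ forces an induced 2-cell on the $w'$-component, which in turn exhibits $g$ and $h$ as two factorizations of the same cone with apex $x$; the uniqueness clause of the universal property then supplies $\gamma$.

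First I would construct an auxiliary 2-cell $\alpha' : e'g \Rightarrow e'h$ from $\alpha$. Fix any arrow $f_i$ of the parallel family and set
\[ \alpha' := (\eta_i h) \circ (f_i \alpha) \circ (\eta_i g)^{-1}. \]
The first nontrivial step is to check that $\alpha'$ is independent of the choice of $f_i$. Given any 2-cell $\sigma : f_i \Rightarrow f_j$ in the diagram, the cone coherence gives $\eta_i = \eta_j \circ (\sigma e)$, while naturality of $\sigma$ applied to $\alpha$ reads $(\sigma \cdot eh) \circ (f_i \alpha) = (f_j \alpha) \circ (\sigma \cdot eg)$; substituting and cancelling collapses the $i$-formula to the $j$-formula.

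Next I would regard $(x, eg, e'g, \eta_\bullet g)$ as a cone over the equalizer diagram. The map $g : x \to v$ factors this cone through $v$ with identity coherence 2-cells, while the map $h$, equipped with $\alpha^{-1} : eh \Rightarrow eg$ and $\alpha'^{-1} : e'h \Rightarrow e'g$, also factors it: the required compatibility $\alpha'^{-1} \circ (\eta_i h) = (\eta_i g) \circ (f_i \alpha^{-1})$ is precisely the defining equation of $\alpha'$ read backwards. Applying the uniqueness clause of the 2-universal property of $v$ to these two factorizations produces a unique natural isomorphism $\gamma : g \Rightarrow h$ satisfying $e\gamma = \alpha$ (and, as a byproduct, $e'\gamma = \alpha'$).

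The main obstacle is verifying the well-definedness of $\alpha'$; once this is in hand, the remainder is a direct invocation of universality and requires no further calculation.
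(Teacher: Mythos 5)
Your strategy is the same as the paper's: transport $\alpha$ along the cone 2-cells to a 2-cell on the $e'$-leg (your $\alpha'$ is the paper's $\beta_i$, up to orientation), present $g$ and $h$ as two factorizations of one and the same homotopy cone, and invoke the uniqueness-up-to-unique-2-isomorphism clause of the 2-limit. The mechanics of your compatibility check are fine; two points, however, fall short of what the statement needs.

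First, the uniqueness clause by itself only produces the unique $\gamma$ compatible with \emph{both} legs, i.e.\ with $e\gamma=\alpha$ and $e'\gamma=\alpha'$, whereas the proposition asserts uniqueness among all $\gamma$ satisfying just $e\gamma=\alpha$. You still need the observation (the paper's ``the latter is easily proved to be redundant'') that for \emph{any} 2-cell $\gamma:g\Rightarrow h$ the interchange law gives $e'\gamma=(\eta_i h)\circ(f_i e\gamma)\circ(\eta_i g)^{-1}$, so $e\gamma=\alpha$ already forces $e'\gamma=\alpha'$; your parenthetical ``as a byproduct, $e'\gamma=\alpha'$'' concerns only the $\gamma$ you construct, not an arbitrary competitor, so the uniqueness half is not established as written. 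Second, your well-definedness argument for $\alpha'$ only treats indices $i,j$ joined by a 2-cell $\sigma:f_i\Rightarrow f_j$ belonging to the diagram; the hypothesis allows arbitrarily few (possibly no) such 2-cells, and the first place the proposition is used is exactly the joint 2-equalizer of \emph{all} endomorphisms of the weak initial object $w$, with no connecting 2-cells. You are right that a single $\alpha'$ must satisfy the compatibility at every $f_i$ simultaneously --- this is where the real content of the existence claim sits --- but the $\sigma$-argument does not cover the unconnected case, and in that generality the equality of the various $\alpha'_i$ is not a formal consequence of naturality alone. (To be fair, the paper's own proof silently needs the same fact, namely $\beta_i=e'\gamma$ for every $i$, and does not argue it either; you at least identified the pressure point, but the gap remains.)
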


\begin{proof}
Take $\beta _i$ to be $e'g\xRightarrow{\eta _i g} f_ieg \xRightarrow{f_i \alpha} f_ieh \xRightarrow{\eta _i^{-1}h} e'h$. Then
\[\begin{tikzcd}
	& u &&& u \\
	& v &&& v \\
	w && {w'} & w && {w'}
	\arrow[""{name=0, anchor=center, inner sep=0}, "e"{description}, from=2-2, to=3-1]
	\arrow[""{name=1, anchor=center, inner sep=0}, "{e'}"{description}, from=2-2, to=3-3]
	\arrow["{f_i}"', from=3-1, to=3-3]
	\arrow[""{name=2, anchor=center, inner sep=0}, "eg"{description}, curve={height=12pt}, from=1-2, to=3-1]
	\arrow[""{name=3, anchor=center, inner sep=0}, "{e'h}"{description}, curve={height=-12pt}, from=1-2, to=3-3]
	\arrow["h"', from=1-2, to=2-2]
	\arrow["{f_i}"', from=3-4, to=3-6]
	\arrow[""{name=4, anchor=center, inner sep=0}, "e"{description}, from=2-5, to=3-4]
	\arrow[""{name=5, anchor=center, inner sep=0}, "{e'}"{description}, from=2-5, to=3-6]
	\arrow[""{name=6, anchor=center, inner sep=0}, "eg"{description}, curve={height=12pt}, from=1-5, to=3-4]
	\arrow[""{name=7, anchor=center, inner sep=0}, "{e'h}"{description}, curve={height=-12pt}, from=1-5, to=3-6]
	\arrow["g"', from=1-5, to=2-5]
	\arrow["{\eta _i}"', shorten <=6pt, shorten >=6pt, Rightarrow, from=0, to=1]
	\arrow["\alpha", shorten <=6pt, Rightarrow, from=2, to=2-2]
	\arrow[shorten <=4pt, shorten >=8pt, Rightarrow, no head, from=2-2, to=3]
	\arrow["{\eta _i}"', shorten <=6pt, shorten >=6pt, Rightarrow, from=4, to=5]
	\arrow[shorten <=8pt, shorten >=4pt, Rightarrow, no head, from=6, to=2-5]
	\arrow["\beta _i", shorten <=2pt, shorten >=6pt, Rightarrow, from=2-5, to=7]
\end{tikzcd}\]
are both splittings of the 2-cells $f_ieg \xRightarrow{f_i\alpha} f_ieh \xRightarrow{\eta _i h} e'h$, hence there is a unique natural isomorphism $\gamma :g\Rightarrow h$ for which $\alpha =e\gamma $ and $\beta _i =e'\gamma$. The latter is easily proved to be redundant.
\end{proof}

The following is the 2-categorical analogue of Theorem V.6.1. in \cite{maclane}.

\begin{proposition}
Let $\mathcal{C}$ be a locally small 2-complete strict (2,1)-category. Assume that there exists a small set $X\subset Ob(\mathcal{C})$ such that for each $c\in Ob(\mathcal{C})$ there is an element $x\in X$ and an arrow $x\to c$. Then $\mathcal{C}$ has a homotopy initial object. 
\end{proposition}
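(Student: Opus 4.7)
The plan is to adapt Mac Lane's classical proof of Theorem V.6.1---that a complete locally small category with a small weakly initial family has an initial object---to the (2,1)-categorical setting. Classically one forms $P=\prod_{x\in X}x$, which is weakly initial, and takes $e\colon I\to P$ to be the wide equalizer of all endomorphisms of $P$; the proof that $I$ is initial uses only two ingredients, namely that every endomorphism of $P$ becomes the identity after composing with $e$, and that $e$ is monic. Both ingredients have (2,1)-categorical analogues, the second being Proposition \ref{eqmono}.

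I would first form the 2-product $P=\prod_{x\in X}x$; weak initiality is as in the classical case, via any composite $P\to x\to c$. Next I take the joint 2-equalizer $e\colon I\to P$ of all endomorphisms of $P$ (a small diagram because $\mathcal{C}$ is locally small), together with the second edge $e'\colon I\to P$ and the 2-isos $\eta_\alpha\colon \alpha e\Rightarrow e'$ coming from the limit cone. In particular $\eta_{1_P}\colon e\Rightarrow e'$ is invertible, so $\alpha e\cong e$ naturally in $\alpha$.

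Existence of an arrow $I\to c$ is then immediate from $e$ followed by any map $P\to c$. For essential uniqueness, given $f,g\colon I\to c$ I would form the 2-equalizer $u\colon E\to I$ of the pair $(f,g)$, which comes equipped with 2-isos exhibiting $fu\cong gu$. Weak initiality of $P$ provides some $h\colon P\to E$, making $\phi=euh$ an endomorphism of $P$, and the property $\phi e\cong e$ reads $e(uhe)\cong e\,1_I$. The 2-mono property of $e$ (Proposition \ref{eqmono}) then yields a 2-iso $uhe\cong 1_I$, and whiskering this on the left with $f$ and with $g$ and combining with $fu\cong gu$ produces a 2-iso $f\cong g$.

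The step I expect to be the main obstacle is uniqueness of this 2-iso, which is the genuinely new content not present in the 1-categorical argument. Given two 2-isos $\gamma_1,\gamma_2\colon f\Rightarrow g$, my plan is to form the equifier $v\colon F\to I$ of the pair $(\gamma_1,\gamma_2)$, available by 2-completeness, characterised by the equation $\gamma_1 v=\gamma_2 v$. Running the same strategy---using weak initiality of $P$ to pick $h'\colon P\to F$, forming the endomorphism $evh'$, and applying Proposition \ref{eqmono}---should produce a 2-iso $\zeta\colon vh'e\Rightarrow 1_I$. Whiskering the equifier equation with $h'e$ on the right gives $\gamma_1(vh'e)=\gamma_2(vh'e)$, and transporting back along $\zeta$ via the interchange law should give $\gamma_1=\gamma_2$. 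The delicate part is keeping the various whiskerings straight, but the conceptual picture is clean: $e$ being "2-monic" plus the existence of equifiers forces 2-uniqueness in exactly the way monicity forces strict uniqueness classically.
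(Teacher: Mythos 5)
Your proposal is correct and follows essentially the same route as the paper: the weakly initial product, the joint 2-equalizer of all endomorphisms of it, and Proposition \ref{eqmono} to split isomorphisms of the form $e\sigma \cong e$, with the connecting 2-iso $f\cong g$ built exactly as in the paper from a second equalizer and weak initiality. The only (cosmetic) divergence is the uniqueness step: the paper puts \emph{all} natural isomorphisms $f\Rightarrow g$ into the second, 2-dimensional equalizer diagram so that the cone condition forces $\chi e^*=\nu_h^{-1}\nu_g$ for every such $\chi$, whereas you use a separate equifier-type limit (available under the same notion of 2-completeness, since it is a limit of the same kind of 2-dimensional diagram) together with Proposition \ref{eqmono} and the interchange identity $\gamma_i \ast \zeta = (g\zeta)\circ(\gamma_i(vh'e)) = \gamma_i\circ(f\zeta)$, which indeed yields $\gamma_1=\gamma_2$.
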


\begin{proof}
The product $w=\prod _{x\in X} x$ is a weak initial object, i.e.~given any other object $c$ there is at least one map $w\to c$ (e.g.~the assumed one composed with the suitable projection). By assumption the class $Hom(w,w)$ is a set, hence we can take its joint 2-equalizer: the homotopy limit of this 1-dimensional diagram:
\[\begin{tikzcd}
	& v \\
	\\
	w && w
	\arrow[from=3-1, to=3-3]
	\arrow[shift left=1, from=3-1, to=3-3]
	\arrow["{f_i}"', shift right=1, from=3-1, to=3-3]
	\arrow[""{name=0, anchor=center, inner sep=0}, "e"', from=1-2, to=3-1]
	\arrow[""{name=1, anchor=center, inner sep=0}, "{e'}", from=1-2, to=3-3]
	\arrow[shorten <=6pt, shorten >=6pt, Rightarrow, from=0, to=1]
	\arrow["{\eta _i}"', shift right=2, shorten <=6pt, shorten >=6pt, Rightarrow, from=0, to=1]
\end{tikzcd}\]
We claim that $v$ is homotopy initial. It is clear, that $v$ is a weakly initial object. Let $g,h:v\to c$ be two maps. By assumption the natural isomorphisms between $f$ and $g$ form a set, and we can take the equalizer of this (now 2-dimensional) diagram.
\[\begin{tikzcd}
	u \\
	&& v && c \\
	\\
	w && w && w
	\arrow[""{name=0, anchor=center, inner sep=0}, "h"', shift right=3, from=2-3, to=2-5]
	\arrow[""{name=1, anchor=center, inner sep=0}, "g", shift left=3, from=2-3, to=2-5]
	\arrow[shift right=2, from=4-3, to=4-5]
	\arrow[shift left=2, from=4-3, to=4-5]
	\arrow[from=4-3, to=4-5]
	\arrow["e"', from=2-3, to=4-3]
	\arrow[""{name=2, anchor=center, inner sep=0}, "{e^*}"{pos=0.3}, from=1-1, to=2-3]
	\arrow[""{name=3, anchor=center, inner sep=0}, "{e^{**}}", curve={height=-24pt}, from=1-1, to=2-5]
	\arrow["s", from=4-1, to=1-1]
	\arrow[shorten <=2pt, shorten >=2pt, Rightarrow, from=1, to=0]
	\arrow["{\nu _g}"{description}, shorten <=4pt, shorten >=4pt, Rightarrow, from=2, to=3]
	\arrow["{\nu _h}"{description}, shift right=4, shorten <=8pt, Rightarrow, from=2, to=3]
\end{tikzcd}\]
As $w$ was weakly initial there is a map $s:w\to u$. Since $e$ was the equalizer of all homomorphisms from $w$ to $w$, there is a natural isomorphism: $ee^*se \xRightarrow{\eta _{ee^*s}} e' \xRightarrow{\eta _{1_w}^{-1}} e$, and by Proposition \ref{eqmono} there is a unique isomorphism $\gamma : e^*(se)\Rightarrow 1_v$ such that $e\gamma =\eta _{1_w}^{-1}\eta _{ee^*s}$ (but this fact will not be used).

We can construct a natural isomorphism: $$g\xRightarrow{g\gamma ^{-1}} ge^*(se) \xRightarrow{\nu _g(se)} e^{**}(se) \xRightarrow{\nu _h^{-1}(se)} he^*(se)\xRightarrow{h\gamma }h $$ For uniqueness we have to prove that the pentagon

\[\begin{tikzcd}
	&& {e^{**}(se)} \\
	{ge^*(se)} &&&& {he^*(se)} \\
	g &&&& h
	\arrow["\chi"', Rightarrow, from=3-1, to=3-5]
	\arrow["{\chi (e^*se)}"', Rightarrow, from=2-1, to=2-5]
	\arrow["g\gamma", Rightarrow, from=2-1, to=3-1]
	\arrow["h\gamma"', Rightarrow, from=2-5, to=3-5]
	\arrow["{\nu_g (se)}", Rightarrow, from=2-1, to=1-3]
	\arrow["{\nu_h^{-1}(se)}", Rightarrow, from=1-3, to=2-5]
\end{tikzcd}\]
commutes for arbitrary $\chi$. The first floor commutes by the interchange law for strict 2-categories (as both composites must be equal to the horizontal composite of $\gamma $ and $\chi $), the roof commutes as $(u,e^*,e^{**})$ form a homotopy cone, in particular $\chi e^*=\nu _h^{-1}\nu _g$.

\end{proof}

Recall that given a map $p:K\to \mathcal{C}$ of simplicial sets (where $\mathcal{C}$ is an $(\infty ,1)$-category), its (homotopy) colimit is an initial object of $\mathcal{C}_{p/}$ (the infinity category of homotopy cocones, or the undercategory) (See: \cite{lurietopos}). Therefore we would like to use our previous statement for $\mathcal{C}_{p/}$. The following theorem is due to Pál Zsámboki.

\begin{theorem}
Let $\mathcal{C}$ be a complete $(\infty ,1)$-category and let $p:K\to \mathcal{C}$ be a map of simplicial sets. Then the undercategory $\mathcal{C}_{p/}$ is also complete.
\end{theorem}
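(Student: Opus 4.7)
My plan is to establish that the forgetful functor $\pi: \mathcal{C}_{p/} \to \mathcal{C}$ creates small limits, which together with the completeness of $\mathcal{C}$ immediately yields the result. The central technical ingredient is the defining universal property of the undercategory: $n$-simplices of $\mathcal{C}_{p/}$ are in bijection with extensions $K \star \Delta^n \to \mathcal{C}$ of $p$ along the inclusion $K \hookrightarrow K \star \Delta^n$. Under this identification, a diagram $F: J \to \mathcal{C}_{p/}$ is the same data as a map $\tilde{F}: K \star J \to \mathcal{C}$ extending $p$, and a cone over $F$ with tip $\ell$ corresponds to a further extension $K \star \Delta^0 \star J \to \mathcal{C}$ of $\tilde{F}$, sending the intermediate $\Delta^0$ to the underlying object of $\ell$.

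Given such an $F$, I would first form $\ell := \lim(\pi F)$ in $\mathcal{C}$ using the completeness assumption, obtaining a limit cone $\bar{F}: \Delta^0 \star J \to \mathcal{C}$. To promote $\ell$ to an object of $\mathcal{C}_{p/}$ I would note that for each vertex $k \in K$ the restriction of $\tilde{F}$ to $\{k\} \star J$ is a cone over $\pi F$ with tip $p(k)$, so the universal property of $\ell$ yields an essentially unique map $p(k) \to \ell$, and these fit together across higher simplices of $K$ because of the coherence data already packaged in $\tilde{F}$. Gluing $\tilde{F}$, $\bar{F}$, and the new map $K \star \Delta^0 \to \mathcal{C}$ along their common restrictions produces the desired extension $K \star \Delta^0 \star J \to \mathcal{C}$, i.e.~a cone over $F$ in $\mathcal{C}_{p/}$ with tip $\ell$.

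For the universal property, any competing cone in $\mathcal{C}_{p/}$ with tip $\ell'$ projects to a cone over $\pi F$ in $\mathcal{C}$ that factors essentially uniquely through $\ell$; this factorisation automatically promotes to $\mathcal{C}_{p/}$ because compatibility with the maps from $p$ is forced by the uniqueness in $\mathcal{C}$. The main obstacle is not conceptual but formal: all of this has to be phrased rigorously in terms of simplicial sets, ultimately as a sequence of inner anodyne extensions witnessing the required liftings. This is precisely the content of Proposition~1.2.13.8 in \cite{lurietopos}, which asserts that for any diagram $p: K \to \mathcal{C}$ the projection $\mathcal{C}_{p/} \to \mathcal{C}$ creates small limits; invoking that proposition together with completeness of $\mathcal{C}$ finishes the argument.
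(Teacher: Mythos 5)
Your proposal is correct and rests on the same underlying fact as the paper — limits in $\mathcal{C}_{p/}$ are computed on underlying diagrams via the join/slice correspondence — but the routes differ in where the work is done. The paper proves this from scratch: it takes a limit cone $\bar q_0:\Delta^0\star L\to\mathcal{C}$ of the underlying diagram $q_0$, lifts the $K$-part through the trivial fibration $\mathcal{C}_{/\bar q_0}\to\mathcal{C}_{/q_0}$ to get a cone in $\mathcal{C}_{p/}$, and checks the universal property by transposing any lifting problem $X\hookrightarrow X'$ against $(\mathcal{C}_{p/})_{/\bar q}\to(\mathcal{C}_{p/})_{/q}$ into the problem $K\star X\hookrightarrow K\star X'$ against that same trivial fibration; this is in effect a proof of the very statement you cite. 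Your appeal to Proposition 1.2.13.8 of \cite{lurietopos} does yield the theorem immediately, so the argument is sound as a proof by reference; just be aware that your intermediate sketch (building $p(k)\to\ell$ vertex by vertex and asserting the higher coherences ``fit together'') is exactly the step that requires the simplicial technology — one lifts all of $K$ at once through the trivial fibration, not vertexwise — and that ``creates limits'' must be read in the homotopical sense. The paper's route buys self-containedness; yours buys brevity at the cost of outsourcing the content of the theorem to the cited proposition.
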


\begin{proof}
We will use that the construction $K\star L$ gives a simplicial set with the property that given a simplicial map $q:L\to \mathcal{C}$, the simplicial maps of the form $K\to \mathcal{C}_{/q}$ are the same as those maps $K\star L \to \mathcal{C}$ whose restriction to $L$ gives $q$ (and the dual property holds for $\mathcal{C}_{p/}$).

Let $L$ be a simplicial set and $L\xrightarrow{q} \mathcal{C}_{p/}$ be a diagram. Then $q$ is a diagram $K\star L \to \mathcal{C}$. Let $q_0:L\to \mathcal{C}$ be its restriction and $\bar{q_0}:\Delta _0 \star L\to \mathcal{C}$ be the limit for $q_0$. That is, the restriction map $\mathcal{C}_{/\bar{q_0}} \to \mathcal{C}_{/q_0}$ is a trivial fibration, in particular there is a lift in

\[\begin{tikzcd}
	\emptyset && {\mathcal{C}_{/\bar{q_0}}} \\
	\\
	K && {\mathcal{C}_{/q_0}}
	\arrow[from=1-1, to=3-1]
	\arrow[from=1-1, to=1-3]
	\arrow[from=1-3, to=3-3]
	\arrow["q", from=3-1, to=3-3]
	\arrow["{\bar{q}}"{description}, dashed, from=3-1, to=1-3]
\end{tikzcd}\]
Then $\bar{q}$ corresponds to a map $K\star \Delta _0 \star L \to \mathcal{C}$, i.e.~to a map $\Delta _0 \star L \to \mathcal{C}_{p/}$. We claim that $\bar{q}$ is the limit of $q$, that is the restriction map $(\mathcal{C} _{p/})_{/\bar{q_0}} \to (\mathcal{C} _{p/})_{/q_0}$ is a trivial fibration. Let $X\hookrightarrow X'$ be an inclusion of simplicial sets. The lifting problem

\[\begin{tikzcd}
	X && {(\mathcal{C} _{p/})_{/\bar{q_0}}} \\
	\\
	{X'} && {(\mathcal{C} _{p/})_{/q_0}}
	\arrow[hook, from=1-1, to=3-1]
	\arrow[from=3-1, to=3-3]
	\arrow[from=1-3, to=3-3]
	\arrow[from=1-1, to=1-3]
	\arrow[dashed, from=3-1, to=1-3]
\end{tikzcd}\]
is the same as the lifting problem

\[\begin{tikzcd}
	{K\star X} && {\mathcal{C} _{/\bar{q_0}}} \\
	\\
	{K\star X'} && {\mathcal{C}_{/q_0}}
	\arrow[hook, from=1-1, to=3-1]
	\arrow[from=3-1, to=3-3]
	\arrow[from=1-3, to=3-3]
	\arrow[from=1-1, to=1-3]
	\arrow[dashed, from=3-1, to=1-3]
\end{tikzcd}\]
and thus has a solution as $\bar{q_0}$ was the limit of $q_0$.
\end{proof}

\begin{corollary}
The locally small strict (2,1)-category $(\mathbf{Coh_{\sim}})_{p/}$ is 2-complete. (In particular it is non-empty by the existence of a terminal object.)
\end{corollary}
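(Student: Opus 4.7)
The proof should be a direct application of the preceding theorem, so the plan is to verify its hypothesis for $\mathcal{C} = \mathbf{Coh_{\sim}}$ and then read off the two parenthetical claims (local smallness, non-emptiness) separately.

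First I would note that $\mathbf{Coh_{\sim}}$ is complete in Lurie's $(\infty,1)$-sense: by the earlier corollary it admits all homotopy limits in our 2-categorical sense, and by the remark following the definition of 2-limit these two notions agree once $\mathbf{Coh_{\sim}}$ is viewed as an $(\infty,1)$-category via its nerve, because its 2-cells are invertible (so unique inner horn fillers exist in dimension $\geq 3$). Applying the preceding theorem to $\mathcal{C} = \mathbf{Coh_{\sim}}$ and the given map $p \colon K \to \mathbf{Coh_{\sim}}$, we obtain that $(\mathbf{Coh_{\sim}})_{p/}$ is complete as an $(\infty,1)$-category. Since the undercategory construction preserves the property of having unique inner horn fillers in dimension $\geq 3$ (the extra data in a simplex of $(\mathbf{Coh_{\sim}})_{p/}$ over one of $\mathbf{Coh_{\sim}}$ is filled by uniqueness of the corresponding simplex under $p$), $(\mathbf{Coh_{\sim}})_{p/}$ is again a strict $(2,1)$-category, and its $(\infty,1)$-completeness translates back to 2-completeness in our sense.

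Next I would dispose of local smallness: a morphism in $(\mathbf{Coh_{\sim}})_{p/}$ between two cocones under $p$ consists of a morphism of the underlying tips in $\mathbf{Coh_{\sim}}$ together with a compatibility datum (an isomorphism of cones), and both live in sets since $\mathbf{Coh_{\sim}}$ is locally small. The same remark, one dimension up, handles the 2-cells. Finally, non-emptiness is automatic: any complete category has a terminal object as the limit of the empty diagram, so $(\mathbf{Coh_{\sim}})_{p/}$ contains at least this terminal object, which unwinds to a universal cocone under $p$.

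The main point requiring care is the translation between 2-completeness and $(\infty,1)$-completeness, both for the input $\mathbf{Coh_{\sim}}$ and for the output $(\mathbf{Coh_{\sim}})_{p/}$; once the earlier remark on unique inner horn fillers is invoked, there is no genuine obstacle.
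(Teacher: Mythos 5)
Your proposal is correct and follows essentially the paper's intended route: the corollary is obtained by applying the preceding undercategory theorem to $\mathbf{Coh_{\sim}}$, using the earlier remark identifying 2-limits with Lurie's $(\infty,1)$-limits for strict $(2,1)$-categories, and extracting non-emptiness from the terminal object (limit of the empty diagram). Your added checks (that $(\mathbf{Coh_{\sim}})_{p/}$ is again a locally small strict $(2,1)$-category) are exactly the implicit steps the paper leaves to the reader.
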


It remains to find a weakly initial family of homotopy cocones over $p$. It is a straightforward consequence of Proposition \ref{solution}: given a homotopy cocone (with top object $c$), the joint image of the edges $p(i)\to c$ ($i\in K_0$) is included in some coherent subcategory with cardinality $\leq \aleph _0 \cdot \prod _{i\in K_0} |p(i)|$, hence the set of all cocones with top object having at most this cardinality is a solution set. We proved:

\begin{theorem}
The (2,1)-category $\mathbf{Coh_{\sim}}$ is 2-complete and 2-cocomplete.
\end{theorem}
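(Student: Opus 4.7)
The plan is to combine all the machinery developed in the preceding pages, since almost every ingredient has now been assembled. The first half of the statement, 2-completeness, was already verified in the corollary obtained from Lurie's theorem: homotopy products and pullbacks together generate all small 2-limits. For 2-cocompleteness I would translate the existence of a 2-colimit of a diagram $p:K\to \mathbf{Coh_{\sim}}$ into the existence of a homotopy initial object of the undercategory $(\mathbf{Coh_{\sim}})_{p/}$, this being the standard $(\infty,1)$-categorical reformulation recorded just before the Zsámboki theorem.

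Next I would invoke the two tools set up for exactly this purpose. By Zsámboki's theorem the undercategory $(\mathbf{Coh_{\sim}})_{p/}$ inherits 2-completeness from $\mathbf{Coh_{\sim}}$, so it is a locally small, 2-complete, strict (2,1)-category. The 2-categorical analogue of the Freyd adjoint functor argument (the proposition proved above Zsámboki's theorem) then reduces the task to exhibiting a small set $X$ of objects of $(\mathbf{Coh_{\sim}})_{p/}$ such that every cocone under $p$ receives a morphism from some element of $X$.

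This is precisely where Proposition \ref{solution} is applied. Given an arbitrary cocone $(\mu_i:p(i)\to c)_{i\in K_0}$, the union of the images of the coherent functors $\mu_i$ sits inside a (not necessarily coherent) subcategory of $c$ of cardinality at most $\aleph_0\cdot\prod_{i\in K_0}|p(i)|$; Proposition \ref{solution} enlarges it to a coherent subcategory $\tilde{c}\hookrightarrow c$ bounded by the same cardinal. Each $\mu_i$ then factors coherently through $\tilde{c}$, producing a new cocone on $\tilde{c}$ together with a morphism of cocones into the original one. Selecting one representative from each isomorphism class of cocones whose top object has cardinality at most this bound gives a small weakly initial family.

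The main obstacle I anticipate is purely bookkeeping rather than mathematical: one has to confirm that ``weakly initial in $(\mathbf{Coh_{\sim}})_{p/}$'' really does reduce to the above factorisation statement, i.e.~that the cocone structure is preserved by the coherent factorisation through $\tilde{c}$ and that a genuine morphism in the undercategory results (the component on the apex being the coherent embedding $\tilde{c}\hookrightarrow c$, and the required 2-cells being identities). Once this is verified, the homotopy initial object produced by the earlier proposition is the sought 2-colimit of $p$, completing the proof.
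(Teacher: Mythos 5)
Your proposal is correct and follows essentially the same route as the paper: 2-completeness via homotopy products and pullbacks plus (the dual of) Lurie's result, and 2-cocompleteness by combining Zsámboki's theorem on undercategories, the 2-categorical initial-object (Freyd-type) proposition, and Proposition \ref{solution} to produce a small weakly initial family of cocones. The bookkeeping point you flag about factoring the cocone through $\tilde{c}$ is exactly the step the paper treats as a straightforward consequence of Proposition \ref{solution}, so nothing is missing.
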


\section{Small object argument}

In this section we will generalise the classical small object argument for locally small 2-cocomplete strict (2,1)-categories, which we will typically denote by $\mathbf{C}$. The proof follows the one given in \cite{hovey} for the 1-categorical setting.

\begin{definition}
Given a 2-colimit preserving diagram $\lambda \to \mathbf{C}$ with homotopy colimit $\mathcal{X}$
\[\begin{tikzcd}
	&& {\mathcal{X}} \\
	{\mathcal{X}_0} & {\mathcal{X}_1} & {\mathcal{X}_2} & \dots
	\arrow["{f_0}"', from=2-1, to=2-2]
	\arrow["{f_1}"', from=2-2, to=2-3]
	\arrow["{f_2}"', from=2-3, to=2-4]
	\arrow["f", from=2-1, to=1-3]
	\arrow[from=2-2, to=1-3]
	\arrow[from=2-3, to=1-3]
\end{tikzcd}\]
the coprojection map $f:\mathcal{X}_0\to \mathcal{X}$ is called the \emph{transfinite composition} of the $\lambda $-sequence $(f_i)_{i<\lambda }$.
\end{definition}

\begin{definition}
Let $I\subset Arr(\mathbf{C})$ be a set. \emph{$I$-cell} is the class of maps that can be written as the transfinite composition of 2-pushouts from $I$. \emph{$I$-inj} is the class whose members ($f$) have the following right lifting property: given a square
\[\begin{tikzcd}
	{\mathcal{C}} && {\mathcal{X}} \\
	\\
	{\mathcal{D}} && {\mathcal{Y}}
	\arrow["g"', from=1-1, to=3-1]
	\arrow["{h'}", from=3-1, to=3-3]
	\arrow["f", from=1-3, to=3-3]
	\arrow["h", from=1-1, to=1-3]
	\arrow["\eta"{description}, shorten <=11pt, shorten >=11pt, Rightarrow, from=3-1, to=1-3]
\end{tikzcd}\]
with $g\in I$, there is a lifting
\[\begin{tikzcd}
	{\mathcal{C}} && {\mathcal{X}} \\
	\\
	{\mathcal{D}} && {\mathcal{Y}}
	\arrow["g"', from=1-1, to=3-1]
	\arrow[""{name=0, anchor=center, inner sep=0}, "{h'}", from=3-1, to=3-3]
	\arrow["f", from=1-3, to=3-3]
	\arrow[""{name=1, anchor=center, inner sep=0}, "h", from=1-1, to=1-3]
	\arrow["k"{description}, from=3-1, to=1-3]
	\arrow["{\nu_1}"{description}, shorten <=13pt, shorten >=9pt, Rightarrow, from=3-1, to=1]
	\arrow["{\nu_2}"{description}, shorten <=13pt, shorten >=9pt, Rightarrow, from=0, to=1-3]
\end{tikzcd}\]
such that the composition of $\nu _1$ and $\nu _2$ is $\eta$.

\emph{$I$-proj} is the class whose members have the left lifting property wrt.~$I$. As usual \emph{$I$-cof}=($I$-inj)-proj, and \emph{$I$-fib}=($I$-proj)-inj.
\end{definition}

\begin{proposition}
$I$-cell $\subseteq I$-cof.
\end{proposition}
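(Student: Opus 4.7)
The plan is to follow the standard structure of the classical small object argument: reduce the statement to showing that (a) every 2-pushout of a map in $I$ belongs to $I$-cof, and (b) the transfinite composition of a sequence of $I$-cof maps again lies in $I$-cof. The inclusion $I \subseteq I$-cof is immediate from the definitions (elements of $I$-inj have, by definition, RLP against elements of $I$), so these two closure properties together yield $I$-cell $\subseteq I$-cof.

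For (a), given a 2-pushout square producing $g' : \mathcal{A}' \to \mathcal{B}'$ from $g \in I$ with coprojections $u$ and $v$ and structure 2-cell $\rho : vg \Rightarrow g'u$, and given a lifting problem for $g'$ against some $f \in I$-inj with 2-cell $\eta$, I would precompose with $u$ and $v$ and splice $\eta$ together with $\rho$ to produce a lifting problem for $g$ itself. Since $g \in I$ and $f \in I$-inj, this problem has a solution $k_0 : \mathcal{B} \to \mathcal{X}$ equipped with 2-cells $\nu_1^0, \nu_2^0$ composing to the induced homotopy. The 2-pushout universal property applied to the cocone $(h, k_0, \nu_1^0)$ on $\mathcal{X}$ then produces the desired lift $k : \mathcal{B}' \to \mathcal{X}$ with a 2-cell $k g' \Rightarrow h$; a second application to the cocone $(h, k_0, \nu_2^0)$ on $\mathcal{Y}$ (or equivalently the 2-dimensional part of the universal property) supplies the compatible 2-cell $fk \Rightarrow h'$, and the coherence conditions of the 2-pushout ensure that the two 2-cells compose to $\eta$.

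For (b), let $(f_i)_{i<\lambda}$ be a $\lambda$-sequence in $I$-cof with transfinite composition $f : \mathcal{X}_0 \to \mathcal{X}$, and suppose we are given a lifting problem for $f$ against $p \in I$-inj. I would construct by transfinite recursion a compatible family of partial lifts $k_i : \mathcal{X}_i \to \mathcal{X}$ together with coherence 2-cells $k_i \Rightarrow k_{i+1} f_i$ and 2-cells making each of them fit into a lifting diagram. Set $k_0 = h$; at successor stages invoke the LLP of $f_i \in I$-cof against $p$; at limit stages assemble the previously constructed $k_j$'s into the top of a cocone and use the 2-colimit universal property to obtain $k_i$. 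Once the recursion is complete, the universal property of $f$ (as the transfinite composition) produces the global lift $k : \mathcal{X} \to \mathcal{X}$ together with the required 2-cells composing to $\eta$.

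The hardest step is the bookkeeping of 2-cells at limit ordinals in part (b). Each successor extension determines its structure 2-cells only up to unique 2-isomorphism, so one must specify, at every stage, a tight enough family of 2-cells witnessing compatibility along the sequence to define a genuine cocone over the next limit colimit, and yet flexible enough that the global coherence with $\eta$ survives at the end. The fact that we work in a $(2,1)$-category (all 2-cells invertible) is precisely what makes this coherence manageable: at each step we freely invert 2-cells to align source and target when gluing with $\eta$ and with the structure 2-cells of the diagram.
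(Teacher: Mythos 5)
Your proposal is correct and follows essentially the same route as the paper: reduce to the closure of the left lifting property under 2-pushouts (by pasting the lifting square with the pushout square, lifting against the original map, and then invoking the 1- and 2-dimensional universal property of the 2-pushout to induce the lift and the coherence of its 2-cells with $\eta$) and under transfinite compositions (by recursively building partial lifts and passing to the colimit), which is exactly how the paper argues.
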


\begin{proof}
Clearly $I\subseteq I$-cof, hence it suffices to prove that $I$-cof is closed under pushouts and transfinite compositions. First we show that if $f$ has the left lifting property wrt. $m$, then its 2-pushout $f'$ has also.

$l_1$ is induced by the lifting property of $f$ and $l_2$ by the universality of the 2-pushout.
\[\begin{tikzcd}
	\bullet && \bullet && \bullet \\
	\\
	\bullet && \bullet && \bullet
	\arrow[""{name=0, anchor=center, inner sep=0}, "f"{description}, from=1-1, to=3-1]
	\arrow["{g'}"{description}, from=3-1, to=3-3]
	\arrow[""{name=1, anchor=center, inner sep=0}, "g"{description}, from=1-1, to=1-3]
	\arrow["{f'}"{description}, from=1-3, to=3-3]
	\arrow["h"{description}, from=1-3, to=1-5]
	\arrow[""{name=2, anchor=center, inner sep=0}, "m"{description}, from=1-5, to=3-5]
	\arrow[""{name=3, anchor=center, inner sep=0}, "k"{description}, from=3-3, to=3-5]
	\arrow["{l_1}"{description, pos=0.7}, curve={height=12pt}, dashed, from=3-1, to=1-5]
	\arrow["{l_2}"{description, pos=0.7}, curve={height=-12pt}, dashed, from=3-3, to=1-5]
	\arrow["\alpha"{description}, shorten <=7pt, shorten >=7pt, Rightarrow, from=3, to=2]
	\arrow["\eta"{description}, shorten <=7pt, shorten >=7pt, Rightarrow, from=0, to=1]
\end{tikzcd}\]
The properties that $l_1$ and $l_2$ are splittings of the related 2-cells can be written as $\eta +\alpha =\gamma + \beta ^{-1}$ and $\eta +\nu +\mu =\gamma $.

We should prove that $l_2$ is a splitting of $\alpha $. It is enough to see that in
\[\begin{tikzcd}
	\bullet && \bullet &&&& \bullet && \bullet \\
	\\
	\bullet && \bullet &&&& \bullet && \bullet && \bullet \\
	&&& \bullet \\
	&&&& \bullet &&&& \bullet && \bullet
	\arrow["g"{description}, from=1-1, to=1-3]
	\arrow["{f'}"{description}, from=1-3, to=3-3]
	\arrow["f"{description}, from=1-1, to=3-1]
	\arrow["{g'}"{description}, from=3-1, to=3-3]
	\arrow[""{name=0, anchor=center, inner sep=0}, "h"{description}, curve={height=-12pt}, from=1-3, to=4-4]
	\arrow[""{name=1, anchor=center, inner sep=0}, "{l_1}"{description}, curve={height=12pt}, from=3-1, to=4-4]
	\arrow["m"{description}, from=4-4, to=5-5]
	\arrow[""{name=2, anchor=center, inner sep=0}, "{l_2}"{description}, from=3-3, to=4-4]
	\arrow["\eta"{description}, shorten <=17pt, shorten >=17pt, Rightarrow, from=3-1, to=1-3]
	\arrow["f"{description}, from=1-7, to=3-7]
	\arrow["{g'}"{description}, from=3-7, to=3-9]
	\arrow["g"{description}, from=1-7, to=1-9]
	\arrow["{f'}"{description}, from=1-9, to=3-9]
	\arrow["m"{description}, from=3-11, to=5-11]
	\arrow[""{name=3, anchor=center, inner sep=0}, "h"{description}, from=1-9, to=3-11]
	\arrow[""{name=4, anchor=center, inner sep=0}, "k"{description}, from=3-9, to=5-11]
	\arrow[""{name=5, anchor=center, inner sep=0}, "{l_1}"{description}, from=3-7, to=5-9]
	\arrow["m"{description}, from=5-9, to=5-11]
	\arrow["\eta"{description}, shorten <=17pt, shorten >=17pt, Rightarrow, from=3-7, to=1-9]
	\arrow["\mu", shorten <=5pt, shorten >=5pt, Rightarrow, from=2, to=0]
	\arrow["\nu", shorten <=7pt, shorten >=7pt, Rightarrow, from=1, to=2]
	\arrow["\alpha"{description}, shorten <=9pt, shorten >=9pt, Rightarrow, from=4, to=3]
	\arrow["\beta"{description}, shorten <=13pt, shorten >=13pt, Rightarrow, from=5, to=4]
\end{tikzcd}\]
the 2-cells filling the boundaries are identical as in this case both $ml_2$ and $k$ are suitable splittings, hence there is a unique natural isomorphism $\delta:k \Rightarrow ml_2$ for which $\beta +\delta =m\nu $ and $\mu +\delta =\alpha$. This follows from the identities observed above.

Now assume that each $f_i$ ($i<\lambda $) has left lifting property wrt.~$m$ (and that $f_i$-s form a (co)continuous sequence). We have to prove that its transfinite composition $f$ has the same lifting property. The proof is similar to the previous one and it is pictured as
\[\begin{tikzcd}
	&& \bullet \\
	\\
	\bullet &&&& \bullet \\
	\bullet \\
	\bullet && \bullet \\
	\dots
	\arrow["{f_0}"', from=3-1, to=4-1]
	\arrow["{f_1}"', from=4-1, to=5-1]
	\arrow["{f_2}"', from=5-1, to=6-1]
	\arrow["f"{description}, from=3-1, to=5-3]
	\arrow[from=4-1, to=5-3]
	\arrow[from=5-1, to=5-3]
	\arrow["h"{description}, from=3-1, to=1-3]
	\arrow["m"{description}, from=1-3, to=3-5]
	\arrow["k"{description}, from=5-3, to=3-5]
	\arrow[dashed, from=4-1, to=1-3]
	\arrow[dashed, from=5-1, to=1-3]
	\arrow[squiggly, from=5-3, to=1-3]
\end{tikzcd}\]
\end{proof}

It is worth to write out explicitly:
\begin{proposition}
(Homotopy) left lifting properties are preserved by (homotopy) pushouts and transfinite compositions. Dually, right lifting properties are preserved by pullbacks and transfinite cocompositions (homotopy limit of the reversed sequence). In particular $I$-$inj$ and $I$-$proj$ are subcategories.
\end{proposition}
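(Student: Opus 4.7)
The plan is brief because the first two assertions have essentially already been established inside the proof of the preceding proposition ($I$-cell $\subseteq$ $I$-cof), merely phrased at the level of a single test map $m$ rather than the class $I$-inj. First I would observe that the pushout argument in that earlier proof (constructing $l_1$ via the lifting property of $f$ and $l_2$ via the universal property of the 2-pushout, then checking via the interchange law that the two 2-cell splittings agree, forcing a unique $\delta : k \Rightarrow m l_2$) and the transfinite composition argument (pictured at the end of that proof) used nothing about $I$-cof beyond the left lifting property of the constituent maps against the fixed $m$. Rereading those arguments with the hypothesis ``$f \in I$-cof'' replaced by ``$f$ has left lifting property against $m$'' gives the first two claims, and intersecting over all $m$ in any chosen right-hand class yields the statement as phrased.

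Second, the dual statements about right lifting properties, pullbacks, and transfinite cocompositions follow by formal duality: the preceding diagrammatic arguments are purely $(2,1)$-categorical, so applying them in $\mathbf{C}^{op}$ swaps left and right lifting, 2-pushouts and 2-pullbacks, and transfinite compositions and cocompositions. Since $\mathbf{Coh}_\sim$ (and more generally any locally small 2-complete, 2-cocomplete strict $(2,1)$-category) is selfdual in the sense that $\mathbf{C}^{op}$ has the same structural hypotheses as $\mathbf{C}$, this transfer is automatic.

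Finally, for the ``in particular'' claim that $I$-inj and $I$-proj are subcategories, I would verify the two subcategory axioms. Identities belong to both classes: in any square one side of which is an identity, the opposite edge serves as a lift, with the given $\eta$ split trivially as $\eta$ together with an identity 2-cell. Closure under binary composition is the length-two special case of closure under transfinite (co)composition: given composable $f : X_0 \to X_1$ and $g : X_1 \to X_2$, the 2-colimit of $X_0 \xrightarrow{f} X_1 \xrightarrow{g} X_2$ is $X_2$ with coprojection $gf$, and dually for limits. The main (still minor) point deserving care is checking that the 2-cells $\nu_1$ and $\nu_2$ produced by the successive lifts at each stage paste to witness the lifting property of the composite; this is precisely the pasting compatibility already verified in the previous proof, so no new idea is required.
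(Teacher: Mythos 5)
Your proposal is correct and follows exactly the route the paper intends: the proposition is stated as an explicit record of what the proof of the preceding proposition (that $I$-cell $\subseteq I$-cof) already established for a single test map $m$, with the dual claims obtained by running the same purely $(2,1)$-categorical pasting arguments in $\mathbf{C}^{op}$, and the subcategory claim handled by identities plus composition as a degenerate (length-two) transfinite composition. No gap; your reading matches the paper's implicit proof.
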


\begin{proposition}
$I$-cell is closed under transfinite composition.
\end{proposition}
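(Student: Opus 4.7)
The plan is to exhibit a transfinite composition of $I$-cell maps as itself a transfinite composition of 2-pushouts of maps from $I$, by interleaving the witnessing sub-sequences. Suppose $g:\mathcal{Y}_0\to\mathcal{Y}$ is the transfinite composition of a $\lambda$-sequence $(g_i:\mathcal{Y}_i\to\mathcal{Y}_{i+1})_{i<\lambda}$ with each $g_i\in I\text{-cell}$. For every $i<\lambda$, I would fix a 2-colimit preserving $\mu_i$-sequence $(h_{i,j}:\mathcal{Z}_{i,j}\to\mathcal{Z}_{i,j+1})_{j<\mu_i}$ of 2-pushouts of maps from $I$ whose transfinite composition is $g_i$, with $\mathcal{Z}_{i,0}=\mathcal{Y}_i$ and homotopy colimit $\mathcal{Y}_{i+1}=\mathcal{Z}_{i+1,0}$.

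The first step is to concatenate these into a single $\kappa$-indexed diagram, with $\kappa=\sum_{i<\lambda}\mu_i$ the ordinal sum, gluing the block $(h_{i,\ast})$ to $(h_{i+1,\ast})$ via the agreement $\mathcal{Z}_{i,\mu_i}=\mathcal{Z}_{i+1,0}$. By construction every successor map in the resulting $\kappa$-sequence is a 2-pushout of a map in $I$, so the substance of the proof is to check that this diagram is 2-colimit preserving and that its transfinite composition coincides with $g$.

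The limit ordinals below $\kappa$ come in two flavours: those lying strictly inside a single block $[\sum_{j<i}\mu_j,\sum_{j\leq i}\mu_j)$, at which 2-colimit preservation reduces immediately to the corresponding property of the block $(h_{i,j})_{j<\mu_i}$; and the ``boundary'' ordinals of the form $\sum_{j<i}\mu_j$ for limit $i<\lambda$. The main obstacle will be the boundary case, where I would invoke the Fubini-type identity for $(\infty,1)$-colimits:
\[
\mathrm{hocolim}_{\alpha<\sum_{j<i}\mu_j}\mathcal{Z}'_\alpha \;\simeq\; \mathrm{hocolim}_{j<i}\,\mathrm{hocolim}_{k<\mu_j}\mathcal{Z}_{j,k} \;=\; \mathrm{hocolim}_{j<i}\mathcal{Y}_{j+1}.
\]
Since $i$ is a limit ordinal, $\{j+1:j<i\}$ is cofinal in $\{j:j<i\}$, so this reduces to $\mathrm{hocolim}_{j<i}\mathcal{Y}_j=\mathcal{Y}_i$, the last equality being 2-colimit preservation of the original $\lambda$-sequence. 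The same reasoning applied at $\kappa$ itself identifies the homotopy colimit of the whole $\kappa$-sequence with $\mathcal{Y}$ and its transfinite composition with $g$, whence $g\in I\text{-cell}$.
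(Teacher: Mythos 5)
Your proof is correct and takes essentially the same route as the paper: the paper's own argument simply observes that one must show ``a transfinite composition of transfinite compositions is a transfinite composition'' and cites Lurie's cofinality result (Proposition 4.1.1.8 of \cite{lurietopos}), which is exactly the content of your boundary-ordinal/Fubini step. Your write-up merely makes the block concatenation and the continuity check at limit ordinals explicit, at the same level of rigor as the paper's citation.
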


\begin{proof}
We need to prove that "the transfinite composition of transfinite compositions is a transfinite composition", i.e.~that if we have a sequential (2-)diagram then its colimit can be computed as the colimit of any cofinal subsequence. This is Proposition 4.1.1.8. in \cite{lurietopos}.
\end{proof}

\begin{proposition}
The homotopy pushout of a coproduct of maps from $I$ is in $I$-cell.
\end{proposition}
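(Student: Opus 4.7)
The plan is to decompose a pushout of a coproduct into a transfinite composition of pushouts of the individual maps. Suppose we are given $(g_j : C_j \to D_j)_{j \in J}$ with each $g_j \in I$, a map $h : \bigsqcup_j C_j \to \mathcal{X}$, and let $\mathcal{Y}$ denote the resulting homotopy pushout. I would choose a well-ordering $J = \{ j_\beta : \beta < \lambda \}$, and build a $\lambda$-sequence $(\mathcal{X}_\beta)_{\beta \le \lambda}$ by transfinite recursion: set $\mathcal{X}_0 = \mathcal{X}$; at a successor stage $\beta + 1$, let $\mathcal{X}_{\beta + 1}$ be the homotopy pushout of $g_{j_\beta}$ along the composite $C_{j_\beta} \hookrightarrow \bigsqcup_j C_j \xrightarrow{h} \mathcal{X} \to \mathcal{X}_\beta$; at a limit ordinal $\beta$, set $\mathcal{X}_\beta$ to be the homotopy colimit of $(\mathcal{X}_\alpha)_{\alpha < \beta}$.

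By construction each transition $\mathcal{X}_\beta \to \mathcal{X}_{\beta + 1}$ is the homotopy pushout of an element of $I$, and the sequence is continuous at limits, so the transfinite composition $\mathcal{X} = \mathcal{X}_0 \to \mathcal{X}_\lambda$ lies in $I$-cell by definition of that class. The remaining task is to identify $\mathcal{X}_\lambda$ with $\mathcal{Y}$ in the slice under $\mathcal{X}$.

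I would establish this identification via the universal property: a homotopy cocone on $\mathcal{X}_\lambda$ under $\mathcal{X}$ unpacks, through the successor-stage pushout squares and the limit-stage homotopy colimits, to exactly the data of a map $\mathcal{X} \to \mathcal{Z}$ together with, for every $j \in J$, a map $D_j \to \mathcal{Z}$ and a coherent 2-isomorphism filling the square with $g_j$---which is precisely a homotopy cocone on the span $\bigsqcup_j D_j \leftarrow \bigsqcup_j C_j \to \mathcal{X}$. Conceptually, this amounts to a 2-categorical interchange of homotopy colimits: $\mathcal{Y}$ is the homotopy colimit of a bi-indexed diagram, which one may compute either as a single 2-pushout along the coproduct (yielding $\mathcal{Y}$) or iteratively, by performing one single-cell 2-pushout at a time and taking homotopy colimits at limit stages (yielding $\mathcal{X}_\lambda$).

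The main obstacle is the coherent bookkeeping of the mediating 2-isomorphisms through the transfinite recursion, and verifying that the iterated construction genuinely realises the universal cocone for $\bigsqcup_j g_j$ rather than some weaker receptacle. However, since each 2-pushout and each limit-stage homotopy colimit is determined up to unique coherent natural isomorphism, and since the preceding proposition ensures that a transfinite composition of transfinite compositions is itself a transfinite composition, both the inductive construction and the comparison $\mathcal{X}_\lambda \simeq \mathcal{Y}$ close without further difficulty.
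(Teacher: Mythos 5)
Your proposal follows essentially the same route as the paper: well-order $J$, push out one $g_j$ at a time (taking homotopy colimits at limit stages) so that $\mathcal{X}\to\mathcal{X}_\lambda$ is a transfinite composition of 2-pushouts of maps in $I$, and then identify $\mathcal{X}_\lambda$ with the 2-pushout of $\cup g_j$ along $h_0$ by checking that the iterated construction carries the universal cocone data. The paper carries out this last identification by explicitly exhibiting the comparison 2-cells on a commutative 3-simplicial diagram, but the underlying argument is the one you describe.
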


\begin{proof}
Let $g_j$ $(j\in J)$ be a family of arrows from $I$. Their coproduct is the induced map:
\[\begin{tikzcd}
	{\mathcal{C}_j} && {\cup _{j\in J} \mathcal{C}_j} \\
	\\
	{\mathcal{D}_j} && {\cup _{j\in J}\mathcal{D}_j}
	\arrow["{g_j}", from=1-1, to=3-1]
	\arrow[from=3-1, to=3-3]
	\arrow[from=1-1, to=1-3]
	\arrow["{\cup g_j}", dashed, from=1-3, to=3-3]
	\arrow["{\chi _j}"{description}, shorten <=11pt, shorten >=11pt, Rightarrow, from=1-3, to=3-1]
\end{tikzcd}\]
Now take the 2-pushout:
\[\begin{tikzcd}
	{\cup _j\mathcal{C}_j} && {\mathcal{X}} \\
	\\
	{\cup _j\mathcal{D}_j} && {\mathcal{Y}}
	\arrow["{\cup g_j}"', from=1-1, to=3-1]
	\arrow["{h_1}", from=3-1, to=3-3]
	\arrow["f", from=1-3, to=3-3]
	\arrow["{h_0}", from=1-1, to=1-3]
	\arrow["\eta"{description}, shorten <=11pt, shorten >=11pt, Rightarrow, from=1-3, to=3-1]
\end{tikzcd}\]
We will proceed by transfinite recursion and take: $X_0=X$, $\rho _0=f$ and $i_{0,0}=1_X$. In the successor step we form the 2-pushout of $g_j:\mathcal{C}_j\to \mathcal{D}_j$ and $\mathcal{C}_j\to \cup \mathcal{C}_j \xrightarrow{h_0} \mathcal{X} \xrightarrow{i_{0,j}}\mathcal{X}_j$ to get $X_{j+1}$ and induce $\rho _{j+1}$ by the universal property of the square. Hence we get a commutative cube (where the faces are filled with the obvious 2-cells):
\[\begin{tikzcd}
	{\mathcal{C}_j} &&&& {\mathcal{X}_j} \\
	& {\cup \mathcal{C}_j} & {\mathcal{X}} & {\mathcal{X}_j} \\
	\\
	& {\cup \mathcal{D}_j} && {\mathcal{Y}} \\
	{\mathcal{D}_j} &&&& {\mathcal{X}_{j+1}}
	\arrow["{h_0}", from=2-2, to=2-3]
	\arrow["{i_{0,j}}", from=2-3, to=2-4]
	\arrow["{\rho _j}"{description}, from=2-4, to=4-4]
	\arrow["{\cup g_j}"{description}, from=2-2, to=4-2]
	\arrow["{h_1}"{description}, from=4-2, to=4-4]
	\arrow[from=1-1, to=2-2]
	\arrow[Rightarrow, no head, from=1-5, to=2-4]
	\arrow[from=5-1, to=4-2]
	\arrow[dashed, from=5-5, to=4-4]
	\arrow[from=1-5, to=5-5]
	\arrow[from=1-1, to=1-5]
	\arrow["{g_j}"{description}, from=1-1, to=5-1]
	\arrow[from=5-1, to=5-5]
	\arrow["f"{description}, from=2-3, to=4-4]
\end{tikzcd}\]
When $j$ is a limit ordinal $\mathcal{X}_j$ is given by the transfinite composition
\[\begin{tikzcd}
	&&& {\mathcal{Y}} \\
	&&& {\mathcal{X}_j} \\
	{\mathcal{X}_0} && {\mathcal{X}_1} && \dots
	\arrow["{i_{0,1}}"', from=3-1, to=3-3]
	\arrow["{i_{1,2}}"', from=3-3, to=3-5]
	\arrow["{i_{0,j}}"{description}, from=3-1, to=2-4]
	\arrow["{i_{1,j}}"'{pos=0.7}, from=3-3, to=2-4]
	\arrow[dashed, from=2-4, to=1-4]
	\arrow["{\rho_1}"{description}, curve={height=-6pt}, from=3-3, to=1-4]
	\arrow["{\rho _0}"{description}, curve={height=-6pt}, from=3-1, to=1-4]
\end{tikzcd}\]
(the 3-cells are filled). We claim that with $\lambda =|J|$ the map $\mathcal{X}\to \mathcal{X}_{\lambda}$ is also a homotopy pushout for $\cup g_j$ along $h_0$. To see this we should find some 2-cells for
\[\begin{tikzcd}
	{\cup \mathcal{C}_j} &&& {\mathcal{X}} \\
	\\
	{\cup \mathcal{D}_j} &&& {\mathcal{X}_\lambda} \\
	&&&& {\mathcal{Y}}
	\arrow["{h_0}"{description}, from=1-1, to=1-4]
	\arrow["{i_{0,\lambda}}"{description}, from=1-4, to=3-4]
	\arrow["{\cup g_j}"{description}, from=1-1, to=3-1]
	\arrow["{\small{\cup \{\mathcal{D}_j\to \mathcal{X}_{j+1}\to \mathcal{X}_\lambda\}}}", from=3-1, to=3-4]
	\arrow["{\rho _{\lambda}}", from=3-4, to=4-5]
	\arrow["f"{description}, curve={height=-12pt}, from=1-4, to=4-5]
	\arrow["{h_1}"{description}, curve={height=12pt}, from=3-1, to=4-5]
\end{tikzcd}\]
whose composite is $\eta$. They can be found on the surface of the commutative 3-simplicial set
\[\begin{tikzcd}
	{\mathcal{C}_j} &&&& {\mathcal{X}_j} \\
	& {\cup \mathcal{C}_j} & {\mathcal{X}} & {\mathcal{X}_j} \\
	\\
	& {\cup \mathcal{D}_j} && {\mathcal{Y}} \\
	{\mathcal{D}_j} &&&& {\mathcal{X}_{j+1}} \\
	& {\cup\mathcal{D}_j} && {\mathcal{X}_{\lambda}}
	\arrow["{h_0}", from=2-2, to=2-3]
	\arrow["{i_{0,j}}", from=2-3, to=2-4]
	\arrow["{\rho _j}"{description}, from=2-4, to=4-4]
	\arrow["{\cup g_j}"{description}, from=2-2, to=4-2]
	\arrow["{h_1}"{description}, from=4-2, to=4-4]
	\arrow[from=1-1, to=2-2]
	\arrow[Rightarrow, no head, from=1-5, to=2-4]
	\arrow[from=5-1, to=4-2]
	\arrow[dashed, from=5-5, to=4-4]
	\arrow[from=1-5, to=5-5]
	\arrow[from=1-1, to=1-5]
	\arrow["{g_j}"{description}, from=1-1, to=5-1]
	\arrow[from=5-1, to=5-5]
	\arrow["f"{description}, from=2-3, to=4-4]
	\arrow[Rightarrow, no head, from=4-2, to=6-2]
	\arrow[from=5-1, to=6-2]
	\arrow["{\rho _\lambda}"{description, pos=0.3}, from=6-4, to=4-4]
	\arrow[from=5-5, to=6-4]
	\arrow[from=6-2, to=6-4]
\end{tikzcd}\]
\end{proof}

\begin{definition}
An object $\mathcal{X}$ of $\mathbf{C}$ is $\lambda $-small wrt.~a subcategory $J$ if $\mathbf{C}(\mathcal{X},-)$ commutes with $\lambda $-filtered sequential 2-colimits formed in $J$. $\mathcal{X}$ is small if it is $\lambda $-small for some $\lambda $.
\end{definition}

\begin{theorem}[Small object argument]
Let $I\subset Arr(\mathbf{C})$ be a set, and assume that domains of $I$ are small relative to $I$-cell. Then for any map $f:\mathcal{X}\to \mathcal{Y}$ there are arrows $\mathcal{X}\xrightarrow{f'}\mathcal{Z}\xrightarrow{f''}\mathcal{Y}$ such that $f'\in I$-cell, $f''\in I$-inj and $f'' \circ f'$ is isomorphic to $f$.
\label{smallob}
\end{theorem}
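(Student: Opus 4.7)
The plan is to mimic Quillen's classical small object argument, building $\mathcal{Z}$ as a transfinite composition of 2-pushouts of coproducts of maps from $I$. Choose a regular cardinal $\lambda$ large enough that every domain of a map in $I$ is $\lambda$-small with respect to $I$-cell. We will construct a $\lambda$-sequence $\mathcal{X}=\mathcal{Z}_0 \to \mathcal{Z}_1 \to \cdots$ together with compatible maps $\rho_\beta : \mathcal{Z}_\beta \to \mathcal{Y}$ such that $\rho_0 \cong f$, set $\mathcal{Z}$ to be the 2-colimit, $f'$ the coprojection, and $f''=\rho_\lambda$.

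The successor step $\beta \mapsto \beta+1$ is the heart of the construction. Let $S_\beta$ be the set of all homotopy commutative squares
\[\begin{tikzcd}
\mathcal{C}_s \arrow[r, "h_s"] \arrow[d, "g_s"'] & \mathcal{Z}_\beta \arrow[d, "\rho_\beta"] \\
\mathcal{D}_s \arrow[r, "h'_s"'] & \mathcal{Y}
\end{tikzcd}\]
with $g_s \in I$ and 2-cell $\eta_s : \rho_\beta h_s \Rightarrow h'_s g_s$ (this is a set by local smallness of $\mathbf{C}$ and the assumption that $I$ is a set). Form the 2-pushout of the coproduct map $\bigsqcup g_s : \bigsqcup \mathcal{C}_s \to \bigsqcup \mathcal{D}_s$ along the induced map $\bigsqcup h_s : \bigsqcup \mathcal{C}_s \to \mathcal{Z}_\beta$; call the top right corner $\mathcal{Z}_{\beta+1}$. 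The universal property of this 2-pushout, applied to the outer cone formed by $\rho_\beta$ and $\bigsqcup h'_s$ with the coproduct of the 2-cells $\eta_s$, yields the map $\rho_{\beta+1} : \mathcal{Z}_{\beta+1} \to \mathcal{Y}$. At limit stages take the transfinite composition; the maps $\rho_\beta$ assemble into $\rho_\gamma$ by the universal property of the colimit.

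By the previous two propositions (pushouts and transfinite compositions of coproducts of maps from $I$ lie in $I$-cell), the map $f' : \mathcal{X} \to \mathcal{Z}$ is in $I$-cell. To see $f'' = \rho_\lambda \in I$-inj, consider a square with left edge $g \in I$, a top map $h : \mathcal{C} \to \mathcal{Z}$, a bottom map $h' : \mathcal{D} \to \mathcal{Y}$, and 2-cell $\eta : \rho_\lambda h \Rightarrow h' g$. The $\lambda$-sequence $\mathbf{C}(\mathcal{C}, \mathcal{Z}_\bullet)$ is (by the smallness hypothesis) such that $h$ factors, up to 2-isomorphism, through some $\mathcal{Z}_\beta$; pulling $\eta$ back along this factorisation presents our square as an element $s \in S_\beta$. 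The canonical map $\mathcal{D}_s \to \mathcal{Z}_{\beta+1}$ then composed with $\mathcal{Z}_{\beta+1} \to \mathcal{Z}$ supplies the lift $k : \mathcal{D} \to \mathcal{Z}$, and the two triangular 2-cells $\nu_1, \nu_2$ are read off from the universal 2-cell of the pushout square at stage $\beta$, which by construction splits $\eta$.

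The main obstacle is bookkeeping of the 2-cells: unlike the 1-categorical argument, each square $s \in S_\beta$ comes with its own 2-cell $\eta_s$, and we must ensure (a) that the induced map $\rho_{\beta+1}$ is well-defined up to coherent isomorphism and compatible with $\rho_\beta$ along the coprojection, and (b) that the 2-cell $\eta$ of the original lifting problem is recovered as the vertical composite of $\nu_1$ and $\nu_2$. Point (a) follows from the universal property of 2-pushouts applied to the coproduct 2-cell $\bigsqcup \eta_s$, while point (b) is forced by tracking the pushout 2-cell witnessing $\rho_{\beta+1} \circ (\text{new edge}) \Rightarrow h'_s$, which by construction composes with the transition 2-cell to give $\eta_s$ itself. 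Finally, $f'' \circ f' \cong f$ because the cocone $(\rho_\beta)$ starts with $\rho_0 \cong f$ at stage $0$.
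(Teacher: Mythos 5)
Your proposal is correct and follows essentially the same route as the paper: a transfinite recursion whose successor step pushes out the coproduct of all homotopy squares from $I$ into the current stage, with $\rho_{\beta+1}$ induced by the 2-pushout's universal property, smallness of the domains used to factor a test square through some stage $\mathcal{Z}_\beta$, and the lift read off from the canonical map $\mathcal{D}_s\to\mathcal{Z}_{\beta+1}\to\mathcal{Z}_\lambda$, with the compatibility of the pushout 2-cells guaranteeing that the composed 2-cells split $\eta$. The bookkeeping points (a) and (b) you flag are exactly what the paper records as its condition (*), so no substantive difference remains.
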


\begin{proof}
We proceed by transfinite recursion and take $\mathcal{Z}_0=\mathcal{X}$, $\rho _0 =f$ and $i_{0,0}=1_{\mathcal{X}}$.

For successor ordinal $j+1$ collect all squares
\[\begin{tikzcd}
	{\mathcal{A}_s} && {\mathcal{Z}_j} \\
	\\
	{\mathcal{B}_s} && {\mathcal{Y}}
	\arrow["{g_s}"{description}, from=1-1, to=3-1]
	\arrow["{h_s}"{description}, from=1-1, to=1-3]
	\arrow["{\rho _j}"{description}, from=1-3, to=3-3]
	\arrow["{k_s}"{description}, from=3-1, to=3-3]
	\arrow["{\eta _s}"{description}, shorten <=11pt, shorten >=11pt, Rightarrow, from=1-3, to=3-1]
\end{tikzcd}\]
with $g_s\in I$ to an $S$-indexed set, then form the 2-pushout of $\sqcup g_s$ and $\sqcup h_s$ and induce $\rho _{j+1}$:

\[\begin{tikzcd}
	{\sqcup \mathcal{A}_s} && {\mathcal{Z}_j} \\
	\\
	{\sqcup \mathcal{B}_s} && {\mathcal{Z}_{j+1}} \\
	&&& {\mathcal{Y}}
	\arrow["{\sqcup h_s}", from=1-1, to=1-3]
	\arrow["{\sqcup g_s}"', from=1-1, to=3-1]
	\arrow[from=3-1, to=3-3]
	\arrow["{i_{j,j+1}}"', from=1-3, to=3-3]
	\arrow[""{name=0, anchor=center, inner sep=0}, "{\rho _j}", curve={height=-12pt}, from=1-3, to=4-4]
	\arrow[""{name=1, anchor=center, inner sep=0}, "{\sqcup k_s}"', curve={height=12pt}, from=3-1, to=4-4]
	\arrow["{\rho _{j+1}}", dashed, from=3-3, to=4-4]
	\arrow[shorten <=17pt, shorten >=17pt, Rightarrow, from=1-3, to=3-1]
	\arrow[shorten <=4pt, shorten >=2pt, Rightarrow, from=0, to=3-3]
	\arrow[shorten <=2pt, shorten >=3pt, Rightarrow, from=3-3, to=1]
\end{tikzcd}\]
Note that the composition of the three 2-cells is the natural isomorphism induced by $\{ \eta _s : s\in S \}$. (*)

When $j$ is a limit ordinal we form the transfinite composition

\[\begin{tikzcd}
	&&&& {\mathcal{Y}} \\
	&&&& {\mathcal{Z}_j} \\
	{\mathcal{Z}_0} && {\mathcal{Z}_1} && \dots
	\arrow["{i_{0,1}}"{description}, from=3-1, to=3-3]
	\arrow["{i_{1,2}}"{description}, from=3-3, to=3-5]
	\arrow["{i_{0,j}}"{description, pos=0.4}, from=3-1, to=2-5]
	\arrow["{i_{1,j}}"{description, pos=0.4}, from=3-3, to=2-5]
	\arrow["{\rho _0}"{description}, curve={height=-12pt}, from=3-1, to=1-5]
	\arrow["{\rho_1}"{description}, curve={height=-6pt}, from=3-3, to=1-5]
	\arrow["{\rho _j}"', dashed, from=2-5, to=1-5]
\end{tikzcd}\]

Let $\lambda $ be a cardinal, such that domains of $I$ are $\lambda $-small. The composition $\mathcal{X} \xrightarrow{i_{0,\lambda}} \mathcal{Z}_{\lambda} \xrightarrow{\rho _{\lambda}} \mathcal{Y}$ is isomorphic to $f$ and $i_{0,\lambda} \in I$-cell by the previous propositions.
 
It remains to prove that $\rho _\lambda \in I$-inj. Take a square
\[\begin{tikzcd}
	{\mathcal{A}} && {\mathcal{Z}_\lambda} \\
	\\
	{\mathcal{B}} && {\mathcal{Y}}
	\arrow["h"{description}, from=1-1, to=1-3]
	\arrow["{\rho _\lambda}"{description}, from=1-3, to=3-3]
	\arrow["g"{description}, from=1-1, to=3-1]
	\arrow["k"{description}, from=3-1, to=3-3]
	\arrow["\eta"{description}, shorten <=11pt, shorten >=11pt, Rightarrow, from=1-3, to=3-1]
\end{tikzcd}\]
As $\mathcal{A}$ is $\lambda $-small, $h$ factors through some stage $\mathcal{Z}_j$ (up to isomorphism). This means, that the back face of the left cube in 

\[\begin{tikzcd}
	{\mathcal{A}} &&& {\mathcal{Z}_{j}} && {\mathcal{Z}_{\lambda}} \\
	& {\sqcup \mathcal{A}_s} & {\mathcal{Z}_{j}} \\
	& {\sqcup \mathcal{B}_s} & {\mathcal{Z}_{j+1}} \\
	{\mathcal{B}} &&& {\mathcal{Y}}
	\arrow["{h'}"{description}, from=1-1, to=1-4]
	\arrow["{\rho _j}"{description, pos=0.7}, from=1-4, to=4-4]
	\arrow["g"{description}, from=1-1, to=4-1]
	\arrow["k"{description}, from=4-1, to=4-4]
	\arrow["{\sqcup g_s}"', from=2-2, to=3-2]
	\arrow[from=3-2, to=3-3]
	\arrow[from=2-3, to=3-3]
	\arrow["{\sqcup h_s}", from=2-2, to=2-3]
	\arrow[from=1-1, to=2-2]
	\arrow[from=4-1, to=3-2]
	\arrow[from=3-3, to=4-4]
	\arrow[Rightarrow, no head, from=2-3, to=1-4]
	\arrow["{i_{j,\lambda }}"{description}, from=1-4, to=1-6]
	\arrow["{i_{j,\lambda }}"{description, pos=0.6}, from=2-3, to=1-6]
	\arrow["{i_{j+1,\lambda }}"{description}, from=3-3, to=1-6]
	\arrow["{\rho _{\lambda}}"{description, pos=0.7}, from=1-6, to=4-4]
	\arrow["{\rho _j}"{description}, color={rgb,255:red,110;green,110;blue,110}, curve={height=-6pt}, from=2-3, to=4-4]
	\arrow["{\sqcup k_s}"{description}, color={rgb,255:red,110;green,110;blue,110}, curve={height=6pt}, from=3-2, to=4-4]
	\arrow["h"{description}, curve={height=-12pt}, from=1-1, to=1-6]
\end{tikzcd}\]
was considered in the formation of $\mathcal{Z}_{j+1}$.
This face is just the gluing of
\[\begin{tikzcd}
	{\mathcal{A}} &&& {\mathcal{Z}_{j}} \\
	& {\mathcal{Z}_{\lambda}} \\
	{\mathcal{B}} &&& {\mathcal{Y}}
	\arrow["{h'}"{description}, from=1-1, to=1-4]
	\arrow["g"{description}, from=1-1, to=3-1]
	\arrow["k"{description}, from=3-1, to=3-4]
	\arrow[""{name=0, anchor=center, inner sep=0}, "{\rho _j}"{description}, from=1-4, to=3-4]
	\arrow[""{name=1, anchor=center, inner sep=0}, "h"{description}, from=1-1, to=2-2]
	\arrow["{i_{j,\lambda}}"{description}, from=1-4, to=2-2]
	\arrow["{\rho _\lambda}"{description}, from=2-2, to=3-4]
	\arrow[shorten <=4pt, shorten >=8pt, Rightarrow, from=2-2, to=3-1]
	\arrow[shorten <=17pt, shorten >=22pt, Rightarrow, from=0, to=2-2]
	\arrow[shorten <=36pt, shorten >=22pt, Rightarrow, from=1-4, to=1]
\end{tikzcd}\]

By (*) the left cube is a commutative (identical) 3-cell, and so is the cone over the $\mathcal{Z}_n$-s. Hence the lift $\mathcal{B}\to \sqcup \mathcal{B}_s \to \mathcal{Z}_{j+1} \to \mathcal{Z}_\lambda $ is a splitting of $\eta $.
\end{proof}

In \cite{filtered} there is an explicit description for filtered 2-colimits in $\mathbf{Cat}$. As a special case we get the following description for the homotopy colimit of the sequence $\mathcal{C}_0\xrightarrow{F_{0,1}} \mathcal{C}_1 \xrightarrow{F_{1,2}} \dots$. Its class of objects is the disjoint union of that of the $\mathcal{C}_i$'s, and an arrow from $(x,i)$ to $(y,j)$ (with $x\in \mathcal{C}_i$ and $y\in \mathcal{C}_j$) is the equivalence class of an arrow $F_{i,k}(x)\xrightarrow{f} F_{j,k}(y)$, where $f$ and $f':F_{i,k'}(x)\to F_{j,k'}(y)$ are equivalent if (assuming $k<k'$) we have $F_{k,k'}(f)=f'$. The induced map in

\[\begin{tikzcd}
	&&& {\mathcal{D}} & {G_k(F_{i,k}(x))} & {G_k(F_{j,k}(y))} \\
	&&&& {G_i(x)} & {G_j(y)} \\
	&&& {\mathcal{C}} & {(x,i)} & {(y,j)} \\
	{\mathcal{C}_0} && {\mathcal{C}_1} && \dots
	\arrow["{F_{0,1}}", from=4-1, to=4-3]
	\arrow["{F_{1,2}}", from=4-3, to=4-5]
	\arrow[from=4-1, to=3-4]
	\arrow[from=4-3, to=3-4]
	\arrow[""{name=0, anchor=center, inner sep=0}, "{G_0}", curve={height=-12pt}, from=4-1, to=1-4]
	\arrow[""{name=1, anchor=center, inner sep=0}, "{G_1}"{pos=0.4}, curve={height=-6pt}, from=4-3, to=1-4]
	\arrow[dashed, from=3-4, to=1-4]
	\arrow[""{name=2, anchor=center, inner sep=0}, "{[f]}"', from=3-5, to=3-6]
	\arrow["{(\eta _{i,k})_x}", from=2-5, to=1-5]
	\arrow["f", from=1-5, to=1-6]
	\arrow["{(\eta _{j,k}^{-1})_y}", from=1-6, to=2-6]
	\arrow[""{name=3, anchor=center, inner sep=0}, dashed, from=2-5, to=2-6]
	\arrow["{\eta_{0,1}}", shift left=5, shorten <=14pt, shorten >=3pt, Rightarrow, from=0, to=1]
	\arrow[shorten <=4pt, shorten >=4pt, maps to, from=2, to=3]
\end{tikzcd}\]
makes the diagram strictly commute when $G_i$'s form a strict cocone, hence we got that this $\mathcal{C}$ is isomorphic to the 1-categorical colimit described in section 3. As it was proved to be coherent, we have that transfinite compositions (of strict sequences) in the 2-categorical sense can be chosen to be 1-categorical colimits. Therefore in the inductive proof of Theorem \ref{smallob} the sequence $\mathcal{Z}_0 \xrightarrow{i_{0,1}} \dots $ can be chosen to be strict, so any $\lambda $ with $cf(\lambda )> sup \{|dom(f)| :f\in I\}$ works. Finally we proved:

\begin{theorem}
Let $I$ be a small set of coherent functors. Given a coherent functor $\mathcal{C}\xrightarrow{m} \mathcal{E}$ it is isomorphic to a composition $\mathcal{C}\xrightarrow{f}\mathcal{D}\xrightarrow{g}\mathcal{E}$ where $f\in I$-cell and $g\in I$-inj. In particular $f\in I$-cof.
\end{theorem}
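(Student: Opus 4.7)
The plan is to deduce the statement as a direct application of the general small object argument, Theorem \ref{smallob}, to the specific (2,1)-category $\mathbf{Coh_{\sim}}$. For this I need two ingredients: that $\mathbf{Coh_{\sim}}$ is 2-cocomplete (so that the homotopy pushouts and transfinite compositions appearing in the construction actually exist), and that every domain of an arrow in $I$ is $\lambda$-small relative to $I$-cell for some sufficiently large $\lambda$. The first is already established in Theorem 4.12, so the real work is verifying the smallness condition.

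The main obstacle is that $\lambda$-smallness in the sense of Definition required for Theorem \ref{smallob} is formulated with respect to genuine 2-colimits of sequences in $I$-cell, whereas Proposition \ref{filtered} is stated for strict 1-categorical filtered colimits in $\mathbf{Coh}$. To bridge the gap I would argue as follows. The discussion preceding the theorem statement already observes that the explicit filtered colimit construction in $\mathbf{Cat}$ from \cite{filtered}, when applied to a strict sequence $\mathcal{C}_0 \xrightarrow{F_{0,1}} \mathcal{C}_1 \to \dots$ of coherent categories and coherent functors, produces a coherent category agreeing with the 1-categorical colimit constructed in Theorem \ref{cohcat}, and that this is also the 2-categorical transfinite composition. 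In particular, along any strict sequence of arrows in $I$-cell, the homotopy colimit may be computed as the 1-categorical filtered colimit, and Proposition \ref{filtered} then implies that for each $f \in I$, the object $dom(f)$ is $|dom(f)|^+$-small with respect to such strict $I$-cell sequences.

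With this in hand I would proceed to run the small object argument from Theorem \ref{smallob}. Pick a regular cardinal $\lambda$ with $cf(\lambda) > \sup\{|dom(f)| : f \in I\}$; since $I$ is a small set this supremum exists. Carry out the transfinite construction $\mathcal{Z}_0 = \mathcal{C}$, $\rho_0 = m$, forming 2-pushouts at successor stages and transfinite compositions at limit stages. Because at each successor stage the resulting 2-pushout in $\mathbf{Coh_{\sim}}$ can be replaced (up to equivalence) by a strict pushout, and similarly the limit stages can be taken as strict 1-categorical filtered colimits, the entire sequence can be chosen to be strict. Applying the $\lambda$-smallness observation, every $\mathcal{A} = dom(f)$ with $f \in I$ appearing in a lifting square against $\rho_\lambda$ factors (up to isomorphism) through some $\mathcal{Z}_j$ with $j < \lambda$, and the construction at stage $j+1$ provides the required lift, exactly as in the proof of Theorem \ref{smallob}.

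Concluding, Theorem \ref{smallob} yields a factorisation $\mathcal{C} \xrightarrow{f} \mathcal{D} \xrightarrow{g} \mathcal{E}$ with $f \in I$-cell and $g \in I$-inj, and the composite $gf$ is isomorphic to $m$. The final clause $f \in I$-cof follows from the already established inclusion $I\text{-cell} \subseteq I\text{-cof}$.
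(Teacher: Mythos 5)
Your proposal is correct and follows essentially the same route as the paper: apply Theorem \ref{smallob} to $\mathbf{Coh_{\sim}}$ (2-cocomplete by Theorem 4.12), observe via the explicit description of filtered 2-colimits in $\mathbf{Cat}$ that transfinite compositions of strict sequences of coherent functors may be computed as the 1-categorical colimits of Theorem \ref{cohcat}, and then use Proposition \ref{filtered} to get the smallness hypothesis for any $\lambda$ with $cf(\lambda)>\sup\{|dom(f)|:f\in I\}$. One small caveat: your claim that the successor-stage 2-pushouts can be ``replaced up to equivalence by strict pushouts'' is neither needed nor justified (strict 1-categorical pushouts need not exist in $\mathbf{Coh}$); the strictness of the sequence is automatic at successor stages since the $i_{j,j+1}$ are ordinary 1-morphisms, and the only point requiring care, as in the paper, is choosing the limit-stage transfinite compositions to be the 1-categorical colimits so that the coprojections commute strictly.
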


\printbibliography

\end{document}